\DeclareFontFamily{U}{mathx}{\hyphenchar\font45}
\DeclareFontShape{U}{mathx}{m}{n}{
      <5> <6> <7> <8> <9> <10>
      <10.95> <12> <14.4> <17.28> <20.74> <24.88>
      mathx10
      }{}
\DeclareSymbolFont{mathx}{U}{mathx}{m}{n}
\DeclareMathAccent{\widecheck}{0}{mathx}{"71}
\DeclareMathAccent{\wideparen}{0}{mathx}{"75}
\newcommand{\iint}{\int\!\!\!\int}
\newcommand{\rrvert}{\vert}
\newcommand{\llvert}{\vert}
\newcommand{\eqref}[1]{(\ref{#1})}
\newtheorem{theorem}{Theorem}[section]
\newtheorem{corollary}[theorem]{Corollary}
\newtheorem{lemma}[theorem]{Lemma}
\newtheorem{proposition}[theorem]{Proposition}
\newtheorem{lemmas}{Lemma}[section]
\newtheorem{propositionn}[lemmas]{Proposition}
\newcommand{\E}{\mathbb{E}}
\newcommand{\W}{\dot{W}}
\newcommand{\ud}{\mathrm{d}}
\newcommand{\Ito}{It\^{o} }
\newcommand{\Itos}{It\^{o}'s }
\newcommand{\calB}{\mathcal{B}}
\newcommand{\calF}{\mathcal{F}}
\newcommand{\calG}{\mathcal{G}}
\newcommand{\calK}{\mathcal{K}}
\newcommand{\calH}{\mathcal{H}}
\newcommand{\calI}{\mathcal{I}}
\newcommand{\calL}{\mathcal{L}}
\newcommand{\calM}{\mathcal{M}}
\newcommand{\calN}{\mathcal{N}}
\newcommand{\calP}{\mathcal{P}}
\newcommand{\bbN}{\mathbb{N}}
\newcommand{\R}{\mathbb{R}}
\newcommand{\Erf}{\ensuremath{\mathrm{erf}}}
\newcommand{\Erfc}{\ensuremath{\mathrm{erfc}}}
\newcommand{\LIP}{\operatorname{Lip}}
\newcommand{\Vip}{\overline{\varsigma}}
\newcommand{\vv}{\varsigma}
\newcommand{\sd}{\beta}
\begin{document}
\begin{frontmatter}

\title{Moments and growth indices
for the nonlinear stochastic heat equation
with rough initial~conditions\thanksref{T1}}
\runtitle{Moments in the stochastic heat equation}
\thankstext{T1}{Supported in part by the Swiss National Foundation
for Scientific Research.}

\begin{aug}
\author[A]{\fnms{Le}~\snm{Chen}\corref{}\ead[label=e1]{chenle02@gmail.com}}
\and
\author[B]{\fnms{Robert C.}~\snm{Dalang}\ead[label=e2]{robert.dalang@epfl.ch}}
\runauthor{L. Chen and R.~C. Dalang}
\affiliation{\'Ecole Polytechnique F\'ed\'erale de Lausanne}
\address[A]{Department of Mathematics\\
University of Kansas\\
405 Snow Hall\\
1460 Jayhawk Blvd\\
Lawrence, Kansas 66045-7594\\
USA\\
\printead{e1}} 
\address[B]{Institut de math\'ematiques\\
\'Ecole Polytechnique F\'ed\'erale de Lausanne\\
Station 8\\
CH-1015 Lausanne\\
Switzerland\\
\printead{e2}}
\end{aug}

\received{\smonth{6} \syear{2013}}
\revised{\smonth{2} \syear{2014}}

%
\begin{abstract}
We study the nonlinear stochastic heat equation in
the spatial domain $\R$, driven by space--time white noise. A central special
case is the parabolic Anderson model. The initial condition is taken to
be a
measure on $\R$, such as the Dirac delta function, but this measure
may also
have noncompact support and even be nontempered (e.g., with
exponentially growing tails). Existence and uniqueness of a random
field solution is proved without
appealing to Gronwall's lemma, by keeping tight control over moments in the
Picard iteration
scheme. Upper bounds on all $p$th moments $(p\ge2)$ are obtained as
well as
a lower bound on second moments.
These bounds become equalities for the parabolic Anderson model when
$p=2$. We
determine the growth indices introduced by Conus and Khoshnevisan
[\textit{Probab. Theory Related Fields} \textbf{152} (2012) 681--701].
\end{abstract}

%
\begin{keyword}[class=AMS]
\kwd[Primary ]{60H15}
\kwd[; secondary ]{60G60}
\kwd{35R60}
\end{keyword}
\begin{keyword}
\kwd{Nonlinear stochastic heat equation}
\kwd{parabolic Anderson model}
\kwd{rough initial data}
\kwd{growth indices}
\end{keyword}
\end{frontmatter}

\section{Introduction}
The stochastic heat equation
%
\begin{eqnarray}
\label{E2:Heat} \cases{ %
\displaystyle \biggl(\frac{\partial}{\partial t} - \frac{\nu}{2}
\frac{\partial^2 }{\partial x^2} \biggr) u(t,x) = \rho\bigl(u(t,x)\bigr)
 \dot{W}(t,x),&\quad $x\in\R,
t \in\R_+^*$, \vspace*{2pt}
\cr
u(0,\cdot) = \mu(\cdot),}
\end{eqnarray}
where $\dot{W}$ is space--time white noise, $\rho(u)$ is globally Lipschitz,
$\mu$ is the initial data, and $\R_+^*=\, ]0,\infty[$, has been intensively
studied during the last three decades by many authors: See
\cite
{AmirCorwinQuastel11Stationary,BertiniCancrini94Intermittence,BorodinCorwin11MP,CarmonaMolchanov94,ConusEct12Initial,ConusKhosh10Weak,ConusKhosh10Farthest,DalangMueller09Intermittency,FoondunKhoshnevisan08Intermittence}
for the intermittency problem,
\cite{DalangKhNualart07HittingAdditive,DalangKhNualart09HittingMult}
for probabilistic potential theory,
\cite{SanSoleSarra99Path,SanSoleSarra99Holder}
for regularity of the solution and
\cite
{DalangEtc08Minicourse,Mueller91Support,MytnikPerkins11Pathwise,PospisilTribe07Parameter,Shiga94Two}
for several other properties.
The important special
case $\rho(u)=\lambda u$ is called \textit{the parabolic Anderson model}
\cite{CarmonaMolchanov94}. Our
work focuses on \eqref{E2:Heat} with general deterministic initial data
$\mu$,
and we study how the initial data affects the moments and asymptotic
properties of the solution.


For the existence of random field solutions (see Definition~\ref{D2:Solution}
below) to \eqref{E2:Heat}, the case where
the initial data
$\mu$ is a bounded and measurable function is covered by the classical
theory of
Walsh \cite{Walsh86}.
Initial data that is more irregular than this also appears the
literature. For
instance, when $\mu$ is a positive Borel measure on $\R$ such that
%
\begin{equation}
\label{E1:BC-InitD} \sup_{t\in[0,T]} \sup_{x\in\R}
\sqrt{t} \bigl(\mu*G_\nu(t,\circ) \bigr) (x)<\infty \qquad\mbox{for all
$T>0$},
\end{equation}
where $*$ denotes convolution in the spatial variable and
%
\begin{equation}
G_\nu(t,x) := \frac{1}{\sqrt{2\pi\nu t}} \exp \biggl\{-
\frac
{x^2}{2\nu
t} \biggr\}, \qquad(t,x)\in\R_+^*\times\R.
\end{equation}
Bertini and Cancrini
\cite{BertiniCancrini94Intermittence} gave an ad-hoc definition of
solution for the parabolic
Anderson model via a smoothing of the space--time white
noise and\break a Feynman--Kac type formula. Their analysis depended heavily on
properties of the local times of Brownian bridges. Recently, Conus and
Khoshnevisan \cite{ConusKhosh10Weak} have constructed a weak solution
defined through
certain norms on random fields. In particular, their solution is
defined for almost all $(t,x)$, but
not at specific
$(t,x)$. Their initial data has to verify certain
technical conditions, which are satisfied by the Dirac delta function
in some of
their cases. More recently, Conus, Joseph, Khoshnevisan and Shiu
\cite{ConusEct12Initial} also studied random field
solutions. In particular, they require the initial data to be a finite measure
of compact support.

After the basic questions of existence, the asymptotic properties of
the solution are of particular interest, in part because the solution
exhibits intermittency properties. More precisely, define the \textit
{upper and
lower Lyapunov exponents} 
as follows:
%
\begin{eqnarray}
\overline{m}_p(x)&:=&\mathop{\lim\sup}_{t\rightarrow+\infty}
\frac{\log
\E [|u(t,x)|^p ]}{t},
\nonumber
\\[-8pt]
\\[-8pt]
\nonumber
\underline{m}_p(x)&:=&\mathop{\lim
\inf}_{t\rightarrow+\infty} \frac{\log
\E [|u(t,x)|^p ]}{t}.
\end{eqnarray}
When the initial data is constant, these two exponents do not depend on $x$.
In this case, following Bertini and Cancrini
\cite{BertiniCancrini94Intermittence}, we
say that the solution is \textit{intermittent} if
$m_n:=\underline{m}_n =\overline{m}_n$ for all $n\in\bbN$ and the following
strict inequalities are satisfied:
%
\begin{equation}
\label{E1:Intermit} m_1 < \frac{m_2}{2}<\cdots<\frac{m_n}{n}<
\cdots.
\end{equation}
Carmona and Molchanov gave the following definition
\cite{CarmonaMolchanov94}, Definition III.1.1, on page~55.

\begin{definition}\label{D2:Intermit}
Let $p$ be the smallest integer for which $m_p>0$. If $p<\infty$, then we
say that the solution $u(t,x)$ exhibits \emph{\textup{(}asymptotic\textup{)}
intermittency of
order~$p$}, and if $p=2$, then it exhibits \emph{full intermittency}.
\end{definition}

Carmona and Molchanov \cite{CarmonaMolchanov94} showed that full
intermittency implies the
intermittency defined by
\eqref{E1:Intermit} (see \cite{CarmonaMolchanov94}, Theroem~III.1.2, on page~55).
This mathematical definition of intermittency is related to the
property that
the solutions are close to zero in vast regions of space--time but develop
high peaks on some small ``islands.''
For the parabolic Anderson model, this property has been well
studied;
see \cite{CarmonaMolchanov94,CrMountfordSh02Lyapunov}
for a discrete formulation and
\cite
{BertiniCancrini94Intermittence,FoondunKhoshnevisan08Intermittence,DalangMueller09Intermittency}
for the continuous formulation.
Further general discussion of the intermittency property can be found
in \cite{Zeldovich90Almighty}.

When the initial data are not homogeneous, in particular, when they
have certain
decrease at infinity, Conus and Khoshnevisan
\cite{ConusKhosh10Farthest} defined the following
\textit{lower and upper exponential growth indices}:
%
\begin{eqnarray}
\label{E1:GrowInd-0} \underline{\lambda}(p)&:= & \sup \biggl\{\alpha>0
\dvtx\mathop{\lim\sup}_{t\rightarrow\infty} \frac{1}{t}\sup_{|x|\ge\alpha t} \log\E
\bigl(\bigl|u(t,x)\bigr|^p \bigr) >0 \biggr\} ,
\\
\label{E1:GrowInd-1} \overline{\lambda}(p)&:= & \inf \biggl\{\alpha>0
\dvtx\mathop{\lim\sup}_{t\rightarrow\infty} \frac{1}{t}\sup_{|x|\ge\alpha t} \log\E
\bigl(\bigl|u(t,x)\bigr|^p \bigr) <0 \biggr\}.
\end{eqnarray}
These quantities are of interest because they give information about the
possible locations of high peaks, and how they propagate away from the origin.
Indeed, if $\underline{\lambda}(p)=\overline{\lambda}(p)=: \lambda(p)$,
then there will be high peaks at time $t$ inside $[-\lambda(p) t,
\lambda(p)
t]$, but no
peaks outside of this interval.
Conus and Khoshnevisan~\cite{ConusKhosh10Farthest} proved in particular
that if
the initial
data $\mu$ is a nonnegative, lower
semicontinuous function with compact support of positive Lebesgue
measure, then for the
Anderson model, 
%
\begin{equation}
\label{e1.7a} \frac{\lambda^2}{2\pi} \le \underline{\lambda}(2) \le\overline{
\lambda}(2) \le\frac{\lambda^2}{2}.
\end{equation}

In this paper, we improve the existence result by working under a
much weaker condition on the initial data, namely, $\mu$ can be any signed
Borel measure over $\R$ such that
%
\begin{equation}
\label{E1:J0finite} \int_\R e^{-a x^2} |\mu|(\ud x) <+
\infty \qquad\mbox{for all $a>0$} ,
\end{equation}
where, from the Jordan decomposition, $\mu=\mu_+-\mu_-$ where
$\mu_\pm$ are two nonnegative Borel measures
with disjoint support and $|\mu|:= \mu_++\mu_-$.
Note that the condition \eqref{E1:J0finite} is equivalent to
\[
\bigl(|\mu| * G_\nu(t,\cdot) \bigr) (x) <+\infty\qquad \mbox{for all
$t>0$ and $x\in\R$},
\]
which means that under condition \eqref{E1:J0finite}, the solution to the
homogeneous heat equation with initial data $\mu$ is well defined for
all time.

On the one hand, condition \eqref{E1:J0finite} allows for measure-valued
initial data, such as the Dirac delta function, and
Proposition~\ref{P2:D-Delta} below shows that
initial data cannot be extended beyond measures to other Schwartz distributions,
even with compact support.
On the other hand, the condition \eqref{E1:J0finite} permits certain
exponential growth at infinity.
For instance, if $\mu(\ud x)=f(x)\,\ud x$, then $f(x)=\exp
(a|x|^p )$,
$a>0$, $p\in\,]0,2[$ (i.e., exponential growth at $\pm\infty$), will satisfy
this condition. Note that the case where the initial data is a continuous
function with linear exponential growth (i.e., $p=1$) has been
considered by
many authors; see \cite
{MytnikPerkins11Pathwise,PospisilTribe07Parameter,Shiga94Two} and the
references therein.

Next, we obtain estimates for the moments $\E(|u(t,x)|^p)$ with both
$t$ and
$x$ fixed for all even integers $p\ge2$ (see Theorem~\ref{T2:ExUni}). In
particular, for the
parabolic Anderson model, we give an explicit formula for the second
moment of
the solution.
When the initial data is either Lebesgue measure or the Dirac delta
function, we
give explicit formulas for the two-point correlation functions [see
\eqref{E2:TP-Lebesgue} and~\eqref{E2:TP-Delta} below], which
can be compared to the integral form given by Bertini and
Cancrini~\cite{BertiniCancrini94Intermittence}, Corollaries 2.4 and
2.5 (see also Remark~\ref{R2:TP-Lebesgue}
below).

Recently, Borodin and Corwin \cite{BorodinCorwin11MP} also obtained the
moment formulas for the parabolic Anderson model in the case where the initial
data is the Dirac delta function. When $p=2$, we obtain the same explicit
formula. For $p>2$, their $p$th moments are represented by multiple contour
integrals.
Our methods are very different from theirs: They
approximate the continuous system by a discrete one. Our formulas allow more
general initial data than the Dirac delta function, and are useful for
establishing other properties, concerning for instance growth indices
and sample path regularity.

Our proof of existence is based on the standard Picard iteration
scheme. The
main difference from the conventional situation is that instead of applying
Gronwall's lemma to bound the second moment from above, we keep tight
control over the
sequence of second moments in the Picard iteration scheme. In the case of
the parabolic Anderson model, this directly gives an explicit formula,
and for
more general functions $\rho$ it gives good bounds.
Note that series representations of the moments are obtained in
\cite{DalangMT06FKT}, yielding a Feynman--Kac-type
formula.

Concerning growth indices, we improve \eqref{e1.7a}
by giving upper bounds on $\overline{\lambda}(p)$ for general functions
$\rho$,
and, in the parabolic Anderson model, by
showing that $\underline{\lambda}(2)= \overline{\lambda}(2) =
\lambda^2/2$ when $\mu$ is a nonnegative measure with compact support (see
Theorem~\ref{T2:Growth}), and we
extend this result to a more general class of measure-valued initial data
(not necessarily with compact support). This is possible mainly thanks
to our
explicit formula for the second moment.
Our result implies in particular that with regard to the propagation of high
peaks, an initial condition with tails that decrease at a sufficiently high
exponential rate [as least as fast as $e^{-\beta|x|}$ with
$\beta\ge\lambda^2/(2\nu)$] produces the same behavior as a
compactly supported
one.

This paper is organized as follows:
All the main results of this paper are stated in Section~\ref{S:MR}.
In particular, in Section~\ref{SS:Main-ExUnMm}, we define the
notion of random field solution of \eqref{E2:Heat}, and then show, assuming
existence of the solution, that one obtains readily formulas for the second
moments in the case of the Anderson model. Then we state and prove our theorem
on existence, uniqueness and moment estimates, discuss various particular
initial conditions, including Lebesgue measure and the Dirac delta
function, and
we show that existence is not possible if the initial condition is
rougher than
a measure.
In Section~\ref{SS:Main-Exp}, we state the results about the
growth
indices.
Proofs of the results in Sections \ref{SS:Main-ExUnMm} and~\ref{SS:Main-Exp}
are given in Sections \ref{S:Proof-ExUnMm} and~\ref{S:Proof-Exp},
respectively.
Finally, in Section~\ref{sec4}, we gather various calculations that
are used
throughout the paper.

\section{Main results}\label{S:MR}

Let $\calM (\R )$ be the set of locally finite (signed)
Borel measures
over $\R$.
Let $\calM_H (\R )$ be the set of signed Borel measures over
$\R$
satisfying~\eqref{E1:J0finite}.
Denote the solution to the homogeneous equation
%
\begin{equation}
\label{E2:Heat-home} \cases{ %
\displaystyle \biggl(\frac{\partial}{\partial t} - \frac{\nu}{2}
\frac{\partial^2 }{\partial x^2} \biggr) u(t,x) = 0,&\quad
$x\in\R, t \in\R_+^*$, \vspace*{2pt}
\cr
u(0,\cdot) = \mu(\cdot) ,}
\end{equation}
by
\[
J_0(t,x) := \bigl(\mu* G_{\nu}(t,\cdot) \bigr) (x) =
\int_\R G_\nu (t,x-y )\mu(\ud y).
\]
%

\subsection{Existence, uniqueness and
moments}\label{SS:Main-ExUnMm}

Let $W= \{
W_t(A), A\in\calB_b (\R ),t\ge0  \}$
be a space--time white noise
defined on a complete probability space $(\Omega,\calF,P)$, where
$\calB_b (\R )$ is the
collection of Borel measurable sets with finite Lebesgue measure.
Let
\[
\calF_t = \sigma \bigl(W_s(A), 0\le s\le t, A\in
\calB_b (\R ) \bigr)\vee \calN,\qquad t\ge0,
\]
be the natural filtration of $W$ augmented by the $\sigma$-field
$\calN$
generated by all $P$-null sets in $\calF$.
In the following, we fix the filtered
probability space $ \{\Omega,\calF,\{\calF_t, t\ge0\},P
\}$.
We use $\Vert\cdot\Vert_p$ to denote the
$L^p(\Omega)$-norm ($p\ge1$).
With this setup, $W$ becomes a worthy martingale measure in the sense
of Walsh
\cite{Walsh86}, and $\iint_{[0,t]\times\R}X(s,y) W(\ud s,\ud y)$ is
well defined in this reference for a suitable class of random fields
$ \{X(s,y), (s,y)\in\R_+\times\R \}$.

We can formally rewrite the spde \eqref{E2:Heat} in the integral form:
%
\begin{equation}
\label{E2:WalshSI} u(t,x) = J_0(t,x) + I(t,x),
\end{equation}
where
\[
I(t,x):=\iint_{[0,t]\times\R} G_\nu (t-s,x-y )
\rho \bigl( u (s,y ) \bigr) W (\ud s,\ud y ).
\]
We use the convention that $G_\nu(t,\cdot)\equiv0$ if $t\le0$. Hence,
$[0,t]\times\R$ in the stochastic integral above can be
replaced by $\R_+\times\R$. In the following, we will
use $\star$ to denote the simultaneous convolution in both space
and time variables,

\begin{definition}\label{D2:Solution}
A process $u= (u(t,x), (t,x)\in\R_+^*\times\R )$ is
called a
\textit{random field solution} to
\eqref{E2:WalshSI} if:
\begin{longlist}[(1)]
\item[(1)]$u$ is adapted, that is, for all $(t,x)\in\R_+^*\times\R$,
$u(t,x)$ is
$\calF_t$-measurable;
\item[(2)]$u$ is jointly measurable with respect to
$\calB (\R_+^*\times\R )\times\calF$;
\item[(3)]$ (G_\nu^2 \star\Vert\rho(u)\Vert_2^2
)(t,x)<+\infty$
for all $(t,x)\in\R_+^*\times\R$, and
the function
$(t,x)\mapsto I(t,x)$ mapping $\R_+^*\times\R$ into
$L^2(\Omega)$ is continuous;
\item[(4)]$u$ satisfies \eqref{E2:WalshSI} a.s.,
for all $(t,x)\in\R_+^*\times\R$.
\end{longlist}
\end{definition}

Notice that the random field is only defined for $t>0$, which is
natural since
at time $t=0$, the solution is defined to be a measure.

According to property (3) in this definition, proving the existence of a
random field solution
requires some estimates on its moments. On the other hand, if we \emph
{assume}
existence, then one can readily obtain moment formulas or bounds.
Indeed, consider for example, the parabolic Anderson
model, and set
\[
f(t,x)=\E\bigl(u(t,x)^2\bigr).
\]
For $(t,x)\in\R_+^*\times\R$ and $n\in\bbN$, we define
%
\begin{eqnarray}
\label{E:Def-Ln} %
\calL_0(t,x)&=&
\calL_0 (t,x;\nu,\lambda ):= \lambda^2
G_\nu^2 (t,x ) = \frac{\lambda^2}{\sqrt{4\pi\nu t}} G_{{\nu}/{2}}(t,x),
\nonumber
\\[-8pt]
\\[-8pt]
\nonumber
\calL_n(t,x)&=&\calL_n (t,x;\nu,\lambda ) := (\underbrace
{\calL _0 \star \cdots\star\calL_0}_{n+1\ \mathrm{times\ of}\ \calL_0})
(t,x)\qquad \mbox{for $n\ge1$.}
\end{eqnarray}
Then by \eqref{E2:WalshSI} and \Itos
isometry, $f(t,x)$
satisfies the integral equation
%
\begin{equation}
\label{E2:fint} f(t,x)= J_0^2(t,x) + (f\star
\calL_0 ) (t,x).
\end{equation}
Apply this relation recursively:
\begin{eqnarray*}
f(t,x) &=& J_0^2(t,x) + \bigl( \bigl[J_0^2
+ (f\star\calL_0 ) \bigr]\star \calL_0 \bigr) (t,x)
\\
&=& J_0^2(t,x) + \bigl( J_0^2
\star\calL_0 \bigr) (t,x) + (f\star \calL_1 ) (t,x)
\\
&\vdots&
\\
&=& J_0^2(t,x) + \sum_{i=0}^{n-1}
\bigl(J_0^2\star\calL_i \bigr) (t,x) + (f
\star\calL_n ) (t,x).
\end{eqnarray*}
It follows from \eqref{E:KS-LB} below and Definition~\ref
{D2:Solution}(3) that
$ (f\star\calL_n )(t,x)$ converges to~$0$ as $n\rightarrow
\infty$,
and the sum converges to $ (J_0^2\star
\calK )(t,x)$, where
%
\begin{equation}
\label{E:Def-K} \calK(t,x)=\calK(t,x;\nu,\lambda):= \sum
_{i=0}^\infty\calL _i(t,x;\nu ,\lambda).
\end{equation}
Thus,
%
\begin{equation}
\label{E:SecMom} \E \bigl(u(t,x)^2 \bigr) =J_0^2(t,x)+
\bigl(J_0^2\star\calK \bigr) (t,x).
\end{equation}

A central observation is that $\calK(t,x)$ can be computed explicitly,
as we
now show.
Let
\begin{eqnarray*}
\Phi(x) &=& \int_{-\infty}^x (2\pi)^{-1/2}e^{-y^2/2}
\,\ud y,\qquad \Erf(x)=\frac{2}{\sqrt{\pi}}\int_0^x
e^{-y^2}\,\ud y,\\
 \Erfc(x)&=&1-\Erf(x).
\end{eqnarray*}
Clearly,
\begin{eqnarray*}
\Phi(x) &=& \tfrac{1}{2} \bigl(1+\Erf(x/\sqrt{2}) \bigr),\qquad \Erf(x) = 2\Phi(
\sqrt{2} x) -1 ,\\
 \Erfc(x)&=&2 \bigl(1-\Phi(\sqrt{2} x) \bigr).
\end{eqnarray*}
Let $\Gamma(\cdot)$ be Euler's gamma function \cite{NIST2010}.

\begin{proposition}\label{P2:K}
Let\vspace*{1pt} $b=\frac{\lambda^2}{\sqrt{4\pi\nu}}$.
For all $n\in\bbN$ and $(t,x)\in\R_+^*\times\R$,
let $\calL_n(t,x)$ and $\calK(t,x)$ be defined in \eqref{E:Def-Ln} and
\eqref{E:Def-K}, respectively. Then
%
\begin{equation}
\label{E:KS-LB} \calL_n(t,x) = G_{{\nu}/{2}}(t,x)
\frac{ (b\sqrt{\pi} )^{n+1}}{\Gamma (
{(n+1)}/{2} )} t^{{(n-1)}/{2}}= \calL_0(t,x)
B_n(t),
\end{equation}
with $B_n(t):=\pi^{{(n+1)}/{2}} b^n
t^{{n}/{2}}/\Gamma (\frac{n+1}{2} )$, and
%
\begin{equation}
\label{E:K} \calK(t,x)=G_{{\nu}/{2}}(t,x) \biggl(\frac{\lambda^2}{\sqrt{4\pi\nu t}}+
\frac{\lambda^4}{2\nu} e^{{\lambda^4 t}/{(4\nu)}}\Phi \biggl(\lambda^2 \sqrt{
\frac{t}{2\nu}} \biggr) \biggr).
\end{equation}
Furthermore,
%
\begin{equation}
\label{E:KS-Kn-K0} (\calK\star\calL_0 ) (t,x) = \calK(t,x) -
\calL_0(t,x) ,
\end{equation}
and $\sum_{n=0}^\infty (B_n(t) )^{1/m}<+\infty$, for all
$m\in\bbN^*$.
\end{proposition}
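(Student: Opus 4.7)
The plan is to prove the four claims in order. For the formula \eqref{E:KS-LB}, I would proceed by induction on $n$. The base case $n=0$ reduces to the elementary identity $G_\nu^2(t,x) = G_{\nu/2}(t,x)/\sqrt{4\pi\nu t}$, which confirms $\calL_0(t,x) = b\,G_{\nu/2}(t,x)/\sqrt{t}$ in agreement with the right-hand side at $n=0$. For the induction step, writing $\calL_{n+1} = \calL_n \star \calL_0$ and applying the semigroup property $G_{\nu/2}(s,\cdot)\ast G_{\nu/2}(t-s,\cdot) = G_{\nu/2}(t,\cdot)$ pulls $G_{\nu/2}(t,x)$ outside the convolution, reducing the time integral to a Beta integral $\int_0^t s^{-1/2}(t-s)^{(n-1)/2}\,ds = t^{n/2}\Gamma(1/2)\Gamma((n+1)/2)/\Gamma((n+2)/2)$. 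The $\Gamma((n+1)/2)$ factor inherited from the induction hypothesis cancels, and the claimed formula for $\calL_{n+1}$ drops out.

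For the closed form \eqref{E:K}, I would substitute \eqref{E:KS-LB} into \eqref{E:Def-K}, set $a := b\sqrt{\pi} = \lambda^2/(2\sqrt{\nu})$ and $u := a\sqrt{t}$ (so that $u^2 = \lambda^4 t/(4\nu)$ and $\sqrt{2}\,u = \lambda^2\sqrt{t/(2\nu)}$), and factor out the $n=0$ term to isolate the $1/\sqrt{t}$ singularity, which already matches the first summand in \eqref{E:K}. What remains is to identify the entire function $E(u) := \sum_{m\ge 0} u^m/\Gamma(m/2+1)$ with $F(u) := 2e^{u^2}\Phi(\sqrt{2}\,u)$. I would do this by showing both functions satisfy the same first-order linear ODE with the same initial value. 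Termwise differentiation of $E$, combined with the recursion $\Gamma(k/2+3/2) = \tfrac{k+1}{2}\Gamma(k/2+1/2)$ (which implies $(k+1)/\Gamma(k/2+3/2) = 2/\Gamma(k/2+1/2)$), yields $E'(u) - 2uE(u) = 2/\sqrt{\pi}$; on the other side, $\Phi'(x) = (2\pi)^{-1/2} e^{-x^2/2}$ gives $F'(u) - 2uF(u) = 2/\sqrt{\pi}$ as well, and $E(0) = F(0) = 1$, so $E \equiv F$. Unwinding the substitutions recovers \eqref{E:K}.

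Identity \eqref{E:KS-Kn-K0} then follows from $\calL_n \star \calL_0 = \calL_{n+1}$ (which is built into the definition \eqref{E:Def-Ln}) together with nonnegativity of all $\calL_n$, which lets Tonelli's theorem justify interchanging sum and convolution: $\calK \star \calL_0 = \sum_{n\ge 0}\calL_{n+1} = \calK - \calL_0$. For the summability of $\sum_n B_n(t)^{1/m}$, the ratio test suffices: from the explicit form, $B_{n+1}(t)/B_n(t) = \sqrt{\pi}\,b\sqrt{t}\,\Gamma((n+1)/2)/\Gamma((n+2)/2)$, and Stirling's formula gives $\Gamma((n+1)/2)/\Gamma((n+2)/2) = O(n^{-1/2})$, so this ratio tends to $0$, and raising to the $1/m$-th power preserves the limit. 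The main obstacle, I expect, is spotting the closed form in the second step --- once the ODE characterization of $E(u)$ is identified, the remaining computations are essentially bookkeeping.
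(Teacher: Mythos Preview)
Your proposal is correct and follows the paper's argument almost exactly in three of the four parts: the induction for \eqref{E:KS-LB} via the semigroup property and Beta integral, the telescoping argument for \eqref{E:KS-Kn-K0}, and the ratio test for the summability of $\sum_n B_n(t)^{1/m}$ are all the same as in the paper.

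The one difference is in the derivation of the closed form \eqref{E:K}. The paper sums the series by citing the known expansion $e^{x^2}\Erf(x) = \sum_{n\ge 1} x^{2n-1}/\Gamma\bigl(\tfrac{2n+1}{2}\bigr)$ (from the NIST handbook), combines it with the Taylor series of $e^{x^2}$ to obtain $e^{x^2}(1+\Erf(x)) = \sum_{n\ge 1} x^{n-1}/\Gamma\bigl(\tfrac{n+1}{2}\bigr)$, and then substitutes $x=\sqrt{\pi b^2 t}$. Your route instead characterizes the same series via the first-order ODE $E'(u)-2uE(u)=2/\sqrt{\pi}$ with $E(0)=1$ and verifies that $2e^{u^2}\Phi(\sqrt{2}\,u)$ solves the same initial value problem. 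Both arguments are short; yours is self-contained and avoids the external reference, while the paper's is slightly quicker if one is willing to cite the identity. They are, of course, two sides of the same coin, since the cited series is itself typically proved by exactly this ODE.
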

\begin{pf}
Since $\Gamma(1/2)=\sqrt{\pi}$ (see \cite{NIST2010}, Equation~5.4.6, page~137),
the equation \eqref{E:KS-LB} clearly
holds for $n=0$.
Suppose by induction that it is true for $n$.
Using the semigroup property of the heat kernel,
\begin{eqnarray*}
\calL_{n+1}(t,x)& =& (\calL_n \star\calL_0 )
(t,x)\\
&=&G_{{\nu}/{2}}(t,x) b \frac{ (b\sqrt{\pi} )^{n+1}}{\Gamma (
{(n+1)}/{2} )} \int_0^t
s^{-1/2}(t-s)^{{(n-1)}/{2}}\,\ud s.
\end{eqnarray*}
Therefore, \eqref{E:KS-LB} is obtained by using the Beta integral
(see \cite{NIST2010}, (5.12.1), page~142)
%
\begin{equation}\qquad
\label{E2:BetaInt} \int_0^t s^{-1/2}(t-s)^{{(n-1)}/{2}}
\,\ud s= t^{n/2} \frac{\Gamma(1/2)\Gamma ({(n+1)}/{2} )}{\Gamma
({(n+2)}/{2}
 )}\qquad \mbox{for $t>0$.}
\end{equation}

Because
\[
e^{x^2} \Erf(x) = \sum_{n=1}^\infty
\frac{
x^{2n-1}}{\Gamma ({(2n+1)}/{2} )} \quad \mbox{and}\quad e^{x^2} = \sum
_{n=1}^\infty \frac{x^{2(n-1)}}{\Gamma ({2n}/{2} )}
\]
(see \cite{NIST2010}, Equation~7.6.2, on page~162, for the first equality),
we see that for $x>0$,
\[
e^{x^2} \bigl(1+\Erf(x) \bigr)= \sum_{n=1}^{\infty}
\frac{x^{n-1}}{\Gamma (
{(n+1)}/{2} )} =-\frac{1}{\sqrt{\pi} x}+\sum_{n=0}^{\infty}
\frac{x^{n-1}}{\Gamma ({(n+1)}/{2} )}.
\]
Move the term $-1/(\sqrt{\pi} x)$ to the left-hand side,
choose $x = \sqrt{\pi b^2 t}$, and then multiply by $\pi b^2 G_{\nu
/2}(t,x)$ on
both sides. Hence, from \eqref{E:KS-LB}, we see that\vspace*{-1pt}
\begin{eqnarray*}
G_{{\nu}/{2}}(t,x) \biggl[\frac{b}{\sqrt{t}} + 2\pi b^2
e^{\pi
b^2 t} \Phi \bigl(\sqrt{2\pi b^2 t} \bigr) \biggr] &=&
G_{{\nu}/{2}}(t,x) \sum_{n=0}^{\infty}
\frac{(b\sqrt{\pi})^{n+1}}{\Gamma ({(n+1)}/{2} )} t^{{(n-1)}/{2}} \\[-1pt]
&= &\sum_{n=0}^\infty
\calL_n(t)=\calK(t,x),\vspace*{-2pt}
\end{eqnarray*}
which proves \eqref{E:K}.

Formula \eqref{E:KS-Kn-K0} is a direct consequence of \eqref{E:Def-K}.
Finally, fix $m\in\bbN^*$.
Apply the ratio test:\vspace*{-1pt}
%
\begin{eqnarray}
\label{E:RT} \frac{ (B_n(t) )^{1/m}}{ (B_{n-1}(t) )^{1/m}} &=&
(\sqrt{\pi t} b )
^{{1}/{m}} \biggl(
\frac{\Gamma ({n}/{2} )}{\Gamma (
{(n+1)}/{2} )} \biggr)^{{1}/{m}}
\nonumber
\\[-9pt]
\\[-9pt]
\nonumber
&\approx &(\sqrt{\pi t} b )^{{1}/{m}}
\biggl(\frac{2}{n} \biggr)^{{1}/{(2m)}}\rightarrow 0\qquad \mbox{as }n
\rightarrow\infty,\vspace*{-1pt}
\end{eqnarray}
where we have used \cite{NIST2010}, Equation~5.11.12, page~141, for the ratio of
the two gamma functions.
Therefore, $\sum_{n=0}^\infty (B_n(t) )^{1/m}<+\infty$. This
completes the proof.
\end{pf}

\begin{remark}[(Moment formula via the Fourier and Laplace transforms)]
\label{R:Transform}
If we assume the existence of a random field solution, then under additional
assumptions, one can also obtain the moment formula by using Fourier
and Laplace
transforms. In particular, consider the case where $\rho(u) = \lambda
u$. Then
$f(t,x)=\E[u(t,x)^2]$ satisfies equation \eqref{E2:fint}.
Assume that the double transform---the Fourier transform in $x$ and Laplace
transform in $t$---of $J_0^2(t,x)$ exists.
Note that this assumption is rather strong: If the initial
data has exponential growth, for example, $\mu(\ud x) = e^{\sd|x|}\,\ud
x$ with
$\sd>0$, then $J_0(t,x)$ has two exponentially growing tails [see
\eqref{E2:Eac2}], and hence the Fourier transform of
$J_0^2(t,x)$ in $x$ does not exist in the sense of tempered distributions.
Apply the Fourier transform in $x$ and then the Laplace transform in
$t$ on both
sides of \eqref{E2:fint}:\vspace*{-1pt}
\[
\calL\calF [f ](z,\xi) = \calL\calF \bigl[J_0^2
\bigr](z,\xi) + \lambda^2 \calL\calF \bigl[G_\nu^2
\bigr](z,\xi) \calL\calF [f ](z,\xi).\vspace*{-1pt}
\]
Solving for $\calL\calF[f](z,\xi)$, we see that\vspace*{-1pt}
\[
\calL\calF [f ](z,\xi) = \calL\calF \bigl[J_0^2
\bigr](z,\xi) + \frac{\lambda^2\calL\calF [G_\nu^2 ](z,\xi)}{1-\lambda
^2\calL
\calF
[G_\nu^2 ](z,\xi)} \calL\calF \bigl[J_0^2
\bigr](z,\xi).\vspace*{-1pt}
\]
Apply the Fourier and Laplace transforms to $G_\nu^2(t,x)$ as follows
(see \cite{Erdelyi1954-I}, page~135):\vspace*{-1pt}
\begin{eqnarray*}
\calF \bigl[G_\nu^2(t,\cdot) \bigr](\xi) &=&
\frac{\exp (-\nu t
|\xi|^2/4 )}{\sqrt{4\pi\nu t}}\quad \mbox{and}\\
 \calL\calF\bigl[G_\nu^2
\bigr](z,\xi) &=& \frac{1}{\sqrt{4\nu z + |\xi|^2\nu^2}}, \qquad\Re[z]>0.
\end{eqnarray*}
Now apply the inverse Laplace transform (see \cite{Erdelyi1954-I}, (4)
on page~233)
to see that
\begin{eqnarray*}
&&\calL^{-1} \biggl[\frac{\lambda^2\calL\calF [G_\nu^2
](z,\xi)}{
1-\lambda^2\calL\calF
[G_\nu^2 ](z,\xi)} \biggr]\\
&&\qquad= \calL^{-1}
\biggl[\frac{\lambda^2}{\sqrt{4\nu z + |\xi|^2\nu^2}
-\lambda^2} \biggr](t)
\\
&&\qquad= \exp \biggl(-\frac{\nu t |\xi|^2}{4} \biggr) \biggl( \frac{\lambda^2}{\sqrt{4\nu\pi t}} +
\frac{\lambda^4}{2\nu} \exp \biggl( \frac{\lambda^4 t}{4\nu
} \biggr) \Phi \biggl(
\lambda^2 \sqrt{\frac{t}{2\nu}} \biggr) \biggr).
\end{eqnarray*}
Finally, take the inverse Fourier transform of the above quantity to obtain
$\calK(t,x)$ as in \eqref{E:K}, together with \eqref{E:SecMom}.
\end{remark}

Assume that $\rho:\R\mapsto\R$ is globally Lipschitz
continuous with Lipschitz constant $\LIP_\rho>0$.
We need some growth conditions on $\rho$:
Assume that for some constants $\mathit{L}_\rho>0$ and $\Vip\ge0$,
%
\begin{equation}
\label{E1:LinGrow} \bigl|\rho(x)\bigr|^2 \le\mathit{L}_\rho^2
\bigl(\Vip^2 +x^2 \bigr)\qquad\mbox{for all $x\in\R$}.
\end{equation}
Note that we can always take $\mathit{L}_\rho\le\sqrt{2}\LIP_\rho$, and the
inequality may even be strict.
In order to bound the second moment from below,
we will sometimes assume that for some constants $\mathit{l}_\rho>0$ and
$\underline{\varsigma}\ge0$,
%
\begin{equation}
\label{E1:lingrow} \bigl|\rho(x)\bigr|^2\ge\mathit{l}_{\rho}^2
\bigl(\underline{\varsigma }^2+x^2 \bigr)\qquad \mbox{for all
$x\in\R$}.
\end{equation}
We shall give special attention to the linear case (the parabolic
Anderson model): $\rho(u)=\lambda u$ with $\lambda\ne0$, which is a
special case of the following quasi-linear growth condition:
for some constant $\vv\ge0$,
%
\begin{equation}
\label{E1:qlinear} \bigl|\rho(x)\bigr|^2= \lambda^2 \bigl(
\vv^2+x^2 \bigr)\qquad \mbox{for all $x\in\R$}.
\end{equation}

Recall the formula for $\calK(t,x)$ in \eqref{E:K}. We will use
the following conventions:
%
\begin{eqnarray}
\label{E:convention} %
\calK(t,x) &:= &\calK (t,x ; \nu,
\lambda ), \qquad\overline{\calK}(t,x) := \calK (t,x ; \nu,\mathit{L}_\rho
),
\nonumber
\\
\underline{\calK}(t,x) &:=&\calK (t,x; \nu,\mathit{l}_\rho ),\qquad
\widetilde{\calK}_p(t,x) :=\calK (t,x ; \nu,a_{p,\Vip}
z_p \mathit{L}_\rho )\\
 \eqntext{\mbox{for all $p>2$}, }
\end{eqnarray}
where the constant $a_{p,\Vip}$ $(\le2)$ is defined by
%
\begin{eqnarray}
\label{E:a_pv} a_{p,\Vip} := \cases{ 2^{(p-1)/p},& \quad$\mbox{if $\Vip
\ne0, p>2$}$,
\cr
\sqrt{2}, & \quad $\mbox{if $\Vip=0, p>2$}$,
\cr
1, &\quad  $\mbox{if $p=2$}$,}
\end{eqnarray}
and $z_p$ is the universal constant in the
Burkholder--Davis--Gundy inequality (see~\cite{ConusKhosh10Farthest}, Theorem~1.4; in particular, $z_2=1$),
and so
%
\begin{equation}
\label{E:z_p} z_p\le2\sqrt{p}\qquad \mbox{for all $p\ge2$.}
\end{equation}
Note that $\widetilde{\calK}_p(t,x)$ implicitly depends on
$\Vip$ through $a_{p,\Vip}$, which will be clear from the context. If $p=2$,
then $\widetilde{\calK}_p(t,x) = \overline{\calK}(t,x)$.
For $t\ge0$, define
%
\begin{equation}
\label{E2:H} \calH(t;\nu,\lambda):= (1\star\calK ) (t,x) = 2
 e^{{\lambda^4t}/{(4\nu)}}
\Phi \biggl(\lambda^2\sqrt{\frac{t}{2\nu}} \biggr)-1
\end{equation}
(see Lemma~\ref{LA:Phi-E} for the second equality).
In particular, by \eqref{E:K} we can write
%
\begin{equation}
\label{E:K2} \calK (t,x;\nu,\lambda )=G_{{\nu}/{2}}(t,x) \biggl(
\frac{\lambda^2}{\sqrt{4\pi\nu t}}+ \frac{\lambda^4}{4\nu} \bigl(\calH(t\dvtx\nu,\lambda)+1 \bigr)
\biggr) .
\end{equation}
We also apply the conventions of \eqref{E:convention} to the kernel functions
$\calL_n (t,x;\nu,\lambda )$ and $\calH(t;\nu,\lambda)$.

Let $\cdot$ and $\circ$ denote time and space dummy variables, respectively.
For $\tau\ge t> 0$ and $x, y\in\R$, define
%
\begin{eqnarray}
\label{rd1}
&&\calI(t,x,\tau, y;\nu,\vv,\lambda)\nonumber\\
&&\qquad :=
\lambda^2\int_0^t \,\ud r\int
_\R\,\ud z \bigl[ J_0^2(r,z)+
\bigl(J_0^2(\cdot,\circ)\star \calK(\cdot,\circ;\nu,
\lambda) \bigr) (r,z)+ \vv^2 \calH(r;\nu,\lambda) \bigr]
\nonumber
\\
&&\qquad\quad{} \times G_\nu(t-r,x-z)G_\nu(\tau-r,y-z)
\\
\nonumber
&&\qquad\quad{} + \frac{\lambda^2 \vv^2}{\nu}|x-y| \biggl(\Phi \biggl(\frac
{|x-y|}{\sqrt
{\nu
(t+\tau)}}
\biggr)-\Phi \biggl(\frac{|x-y|}{\sqrt{\nu(\tau
-t)}} \biggr) \biggr)
\\
&&\qquad\quad{} + \lambda^2 \vv^2 \bigl[(t+\tau) G_{\nu} (t+
\tau,x-y ) - (\tau-t) G_{\nu} (\tau-t,x-y ) \bigr].\nonumber
\end{eqnarray}
When $\tau= t$ in this formula, we set $\Phi(|x-y|/0) =1$.

\begin{theorem}[(Existence, uniqueness and moments)]
\label{T2:ExUni}
Suppose that
the function $\rho$ is Lipschitz continuous and satisfies \eqref{E1:LinGrow},
and $\mu\in\calM_H(\R)$.
Then the stochastic integral equation \eqref{E2:WalshSI} has a random
field solution
$u=\{u(t,x), (t,x)\in\R_+^*\times\R\}$. Moreover:
\begin{longlist}[(1)]
\item[(1)] $u$ is unique (in the sense of versions).

\item[(2)] $(t,x)\mapsto u(t,x)$ is $L^p(\Omega)$-continuous for all integers
$p\ge
2$.

\item[(3)] For all even integers $p\ge2$, all $\tau\ge t>0$ and $x,y\in\R$,
%
\begin{equation}\qquad
\label{E2:SecMom-Up} \bigl\Vert u(t,x)\bigr\Vert_p^2 \le \cases{
J_0^2(t,x) + \bigl(J_0^2\star
\overline{\calK} \bigr) (t,x) + \Vip^2 \overline{\calH}(t),&\quad $\mbox{if
$p=2$,}$
\cr
2J_0^2(t,x) + \bigl(2J_0^2
\star\widetilde{\calK}_p \bigr) (t,x) + \Vip^2 \widetilde{
\calH}_p(t),& \quad $\mbox{if $p>2$,}$}
\end{equation}
and
%
\begin{equation}
\label{E2:TP-Up} \E \bigl[u(t,x)u (\tau,y ) \bigr] \le J_0(t,x)J_0
(\tau,y ) + \calI(t,x,\tau,y;\nu,\Vip ,\mathit{L}_\rho).
\end{equation}
\item[(4)] If $\rho$ satisfies \eqref{E1:lingrow}, then for all $\tau\ge
t>0$ and
$x,y\in\R$,
%
\begin{equation}
\label{E2:SecMom-Lower} \bigl\Vert u(t,x)\bigr\Vert_2^2 \ge
J_0^2(t,x) + \bigl(J_0^2
\star\underline {\calK} \bigr) (t,x)+ \underline{\varsigma}^2
\underline{\calH}(t) 
\end{equation}
and
%
\begin{equation}
\label{E2:TP-Lower} \E \bigl[u(t,x)u (\tau,y ) \bigr] \ge J_0(t,x)J_0
(\tau,y ) + \calI(t,x,\tau,y;\nu,\underline {\varsigma},\mathit{l}_\rho).
\end{equation}
\item[(5)] In particular, if $|\rho(u)|^2=\lambda^2
 (\vv^2+u^2 )$, then for all $\tau\ge t>0$ and $x,y\in\R$,
%
\begin{equation}
\label{E2:SecMom} \bigl\Vert u(t,x)\bigr\Vert_2^2 =
J_0^2(t,x) + \bigl(J_0^2
\star\calK \bigr) (t,x)+ \vv^2 \calH(t) 
\end{equation}
and
%
\begin{equation}
\label{E2:TP} \E \bigl[u(t,x)u (\tau,y ) \bigr] = J_0(t,x)J_0
(\tau,y ) + \calI(t,x,\tau,y;\nu,\vv ,\lambda).
\end{equation}
\end{longlist}
\end{theorem}

This theorem will be proved in Section~\ref{SS2:Existence}.
We note that it is not clear if~\eqref{E2:SecMom-Up} holds when $p>2$
is a real
number but \emph{not} an even integer. However, if $k\in\{2,3,\ldots
\}$ and
$2(k-1)<p\le2k$, then $\Vert u(t,x)\Vert_p^2\le\Vert u(t,x)\Vert
_{2k}^2$ and
\eqref{E2:SecMom-Up} applies to $\Vert u(t,x)\Vert_{2k}^2$.

\begin{corollary}[(Constant initial data)] \label{C2:TP-Lebesgue}
Suppose that $|\rho(u)|^2=\lambda^2(\vv^2+u^2)$ and $\mu$ is
Lebesgue measure. Then for all $\tau\ge t>0$ and $x,y\in\R$,
%
\begin{eqnarray}\quad
\label{E2:TP-Lebesgue}&& \E \bigl[u(t,x)u (\tau,y ) \bigr]
\nonumber
\\
&&\qquad=1+\bigl(1+\vv^2\bigr) \\
&&\qquad\quad{}\times\biggl[\exp \biggl(\frac{\lambda^4 \bar{t}-2 \lambda^2 |x-y|}{4
\nu} \biggr)
\Erfc \biggl(\frac{|x-y|-\lambda^2 \bar{t}}{2
(\nu\bar{t} )^{1/2}} \biggr) - \Erfc \biggl(\frac{|x-y|}{2 (\nu\bar{t} )^{1/2}} \biggr)
\biggr],\nonumber 
\end{eqnarray}
where $\bar{t} = (t+\tau)/2$, and
%
\begin{equation}
\label{E2:SecMom-Lebesgue} \E \bigl[\bigl|u(t,x)\bigr|^2 \bigr] = 1+\bigl(1+
\vv^2\bigr)\calH(t).
\end{equation}
\end{corollary}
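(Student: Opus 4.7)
The key observation is that when $\mu$ is Lebesgue measure, $J_0(t,x) = \int_\R G_\nu(t,x-y)\,\ud y \equiv 1$ since $G_\nu(t,\cdot)$ is a probability density. This single identity drives both parts of the corollary.

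For the second moment \eqref{E2:SecMom-Lebesgue}, I would start from Theorem \ref{T2:ExUni}(5) formula \eqref{E2:SecMom}. With $J_0\equiv 1$ one has $J_0^2\equiv 1$ and $(J_0^2\star\calK)(t,x)=(1\star\calK)(t,x)=\calH(t;\nu,\lambda)$ by the definition \eqref{E2:H}. Combining, $\|u(t,x)\|_2^2 = 1+\calH(t)+\vv^2\calH(t) = 1+(1+\vv^2)\calH(t)$, as claimed. This part is immediate.

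For the two-point correlation \eqref{E2:TP-Lebesgue}, I would apply \eqref{E2:TP} with $J_0\equiv 1$ and then evaluate $\calI(t,x,\tau,y;\nu,\vv,\lambda)$ as given in \eqref{rd1}. The spatial integral collapses via the Gaussian semigroup property,
\begin{equation*}
\int_\R G_\nu(t-r,x-z)\,G_\nu(\tau-r,y-z)\,\ud z = G_\nu(t+\tau-2r,\,x-y),
\end{equation*}
and the bracket in the integrand becomes $1+\calH(r)+\vv^2\calH(r)=1+(1+\vv^2)\calH(r)$. After the natural change of variable $\bar t=(t+\tau)/2$, so that $t+\tau-2r=2(\bar t-r)$, the problem reduces to evaluating a one-dimensional integral of $[1+(1+\vv^2)\calH(r)]\,G_\nu(2(\bar t-r),x-y)$ over $r\in[0,t]$ and combining it with the boundary contributions in \eqref{rd1} involving $\Phi$ and the Gaussian kernel.

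The main obstacle is the explicit integration. I would substitute $\calH(r)=2e^{\lambda^4 r/(4\nu)}\Phi(\lambda^2\sqrt{r/(2\nu)})-1$, split the integrand by coefficient ($1$ versus $1+\vv^2$), and perform the Gaussian-times-error-function integrals using the auxiliary identities collected in Section \ref{sec4}, together with the conversion $\Erfc(x)=2(1-\Phi(\sqrt 2\,x))$. I expect the $r=0$ boundary contributions from these antiderivatives to cancel against the $(\tau-t)$-terms in \eqref{rd1} (both the $\Phi(|x-y|/\sqrt{\nu(\tau-t)})$ term and the $(\tau-t)G_\nu(\tau-t,x-y)$ term), while the $r=t$ endpoint produces the $\bar t$-dependent $\Erfc$ factor in \eqref{E2:TP-Lebesgue}; the remaining $\lambda^2\vv^2|x-y|/\nu$ correction should merge cleanly once $\Phi$ is rewritten in terms of $\Erfc$. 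Adding the constant $J_0(t,x)J_0(\tau,y)=1$ then yields the stated closed form.
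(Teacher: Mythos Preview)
Your proof of \eqref{E2:SecMom-Lebesgue} is identical to the paper's. For \eqref{E2:TP-Lebesgue}, your strategy is correct and uses the same ingredients as the paper, but the paper organizes the computation more cleanly in one respect that is worth noting.

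Rather than keeping the last two (boundary) terms of \eqref{rd1} separate and hoping for cancellations at the end, the paper observes that, by Lemma~\ref{L:IntIntGG}, those terms are precisely
\[
\lambda^2\vv^2\int_0^t\ud r\int_\R\ud z\; G_\nu(t-r,x-z)\,G_\nu(\tau-r,y-z),
\]
so they can be \emph{reabsorbed} into the main integral as an extra constant $\vv^2$ in the bracket. The integrand then becomes $\vv^2+1+(1+\vv^2)\calH(r)=(1+\vv^2)(\calH(r)+1)$, and after the semigroup identity one is left with exactly
\[
\lambda^2(1+\vv^2)\int_0^t\big(\calH(r)+1\big)\,G_{2\nu}(\bar t-r,x-y)\,\ud r,
\]
to which Lemma~\ref{L2:IntHG} applies directly. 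This bypasses the split ``$1$ versus $1+\vv^2$'' and the bookkeeping of endpoint cancellations that you outline. Incidentally, your identification of which endpoint produces which term is reversed: at $r=0$ one has $\bar t-r=\bar t=(t+\tau)/2$, giving the $(\tau+t)$-terms, while $r=t$ gives $\bar t-r=(\tau-t)/2$, i.e.\ the $(\tau-t)$-terms. This does not affect the correctness of your plan, only the narrative.
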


\begin{pf}
In this case, $J_0(t,x)\equiv1$.
Formula \eqref{E2:SecMom-Lebesgue} follows from \eqref{E2:SecMom} and~\eqref{E2:H}.
By \eqref{E2:TP} and using Lemma~\ref{L:IntIntGG} to account for the
last two terms in \eqref{rd1}, we see that
\begin{eqnarray*}
\E \bigl[u(t,x)u (\tau,y ) \bigr] & =&1+\lambda^2 \int
_0^t \,\ud r \int_\R\,\ud z
\bigl[\vv^2+1+\bigl(1+\vv^2\bigr)\calH (r) \bigr]\nonumber\\
&&{}\times G_\nu(t-r,x-z)G_\nu (\tau-r,y-z )
\\
&=&1+\lambda^2\bigl(1+\vv^2\bigr) \int
_0^t \bigl(\calH(r)+1 \bigr) G_{2\nu}
\biggl(\frac{t+\tau}{2}-r,x-y \biggr)\,\ud r,\nonumber
\end{eqnarray*}
and this last integral is evaluated by Lemma~\ref{L2:IntHG}.
\end{pf}

\begin{remark}\label{R2:TP-Lebesgue}
If $\rho(u)=u$ (i.e., $\lambda=1$ and $\vv=0$), then
\eqref{E2:SecMom-Lebesgue} recovers, in the case
$n=2$, the moment formulas of
Bertini and Cancrini
\cite{BertiniCancrini94Intermittence}, Theorem~2.6.
As for the two-point correlation function, \cite
{BertiniCancrini94Intermittence}, Corollary~2.4, states the integral formula
%
\begin{eqnarray}
\label{E2:TP-Lebesgue-BC}&& \E \bigl[u(t,x)u (t,y ) \bigr]
\nonumber
\\[-8pt]
\\[-8pt]
\nonumber
&&\qquad = \int_0^t
\,\ud s \frac{|x-y|}{\sqrt{\pi\nu s^3}} \exp \biggl\{-\frac{(x-y)^2}{4\nu
s}+\frac{t-s}{4\nu}
\biggr\}\Phi \biggl(\sqrt{\frac{t-s}{2\nu
}} \biggr).
\end{eqnarray}
By Lemma~\ref{L2:IntHG-BC} below, the integral is equal to
\[
e^{{(t-2 |x-y|)}/{(4 \nu)}} \Erfc \bigl((4\nu t)^{-1/2}\bigl(|x-y|-t\bigr) \bigr),
\]
so their result differs from ours. The difference is a term
\[
1-\Erfc \bigl((4\nu t)^{-1/2}|x-y| \bigr) =\Erf \bigl((4\nu
t)^{-1/2}|x-y| \bigr),
\]
which vanishes when $x=y$.
However, for $x\ne y$, this is not the case. For instance,
as $t$ tends to zero, the correlation
function should have a limit equal to one, while~\eqref{E2:TP-Lebesgue-BC}
has limit zero.
The argument in \cite{BertiniCancrini94Intermittence} should be
modified as
follows (we use the notation in
their paper): (4.6) on page 1398 should be
\[
\E_{0}^{\beta,1} \biggl[ \exp \biggl(\frac{L_t^{\xi}(\beta)}{\sqrt{2\nu}} \biggr)
\biggr] = \int_0^t P_\xi(\ud s)
\E_0^\beta \biggl[\exp \biggl(\frac{L_{t-s}(\beta)}{\sqrt{2\nu
}} \biggr)
\biggr] + P(T_\xi\ge t).
\]
The extra term $P(T_\xi\ge t)$ is equal to
\[
\int_t^\infty \frac{|\xi|}{\sqrt{2\pi s^3}} \exp \biggl(-
\frac{\xi^2}{2s} \biggr) \,\ud s = \Erf \biggl(\frac{|\xi|}{\sqrt{2t}} \biggr) = \Erf
\biggl(\frac{\llvert x-x'\rrvert }{\sqrt{4\nu t}} \biggr).
\]
With this term, \eqref{E2:TP-Lebesgue} is recovered.
\end{remark}

\begin{example}[(Higher moments for constant initial data)]
\label{Ex2:MomLeb}
Suppose that $\mu(\ud x) = \,\ud x$. Then $J_0(t,x)\equiv1$.
By \eqref{E2:SecMom-Up},
\[
\E\bigl[\bigl|u(t,x)\bigr|^p\bigr] \le2^{p-1} + 2^{p-1}
\bigl(2+\Vip^2 \bigr)^{p/2} \exp \biggl(\frac{a_{p,\Vip}^4 z_p^4 p \mathit{L}_\rho^4
t}{8\nu}
\biggr).
\]
Using \eqref{E:z_p} and \eqref{E:a_pv}, replace $z_p$ by $2\sqrt{p}$,
and $a_{p,\Vip}$ by $2$. Thus,
$\overline{m}_p(x)\equiv\overline{m}_p \le
2^5 p^3 \mathit{L}_\rho^4/\nu$.
If $\Vip=0$, we can replace $a_{p,\Vip}$ by $\sqrt{2}$ instead of
$2$, which
gives a slightly better bound: $\overline{m}_p\le2^3 p^3
\mathit{L}_\rho^4/\nu$.
In particular, for the parabolic Anderson model $\rho(u) = \lambda u$,
we obtain
$\overline{m}_p\le2^3 p^3 \lambda^4/\nu$, which is consistent with
Bertini and Cancrini's formula:
$m_p= \frac{\lambda^4}{4! \nu} p(p^2-1)$ (see
\cite{BertiniCancrini94Intermittence}, (2.40)).
\end{example}

\begin{corollary}[(Dirac delta initial data)]\label{C2:TP-Delta}
Suppose that $|\rho(u)|^2=\lambda^2(\vv^2+u^2)$ and $\mu$ is the
Dirac delta
measure with a unit mass at zero. Then for all $t>0$ and $x,y\in\R$,
%
\begin{eqnarray}
\label{E2:TP-Delta} \E \bigl[u(t,x)u (t,y ) \bigr] &= &G_\nu(t,x)G_\nu
(t,y ) -\vv^2 \Erfc \biggl(\frac{|x-y|}{2 \sqrt{\nu
t}} \biggr)\nonumber
\\
&&{}+ \biggl( \frac{\lambda^2}{4\nu} G_{{\nu}/{2}} \biggl(t,\frac{x+y}{2}
\biggr)+\vv^2 \biggr) \exp \biggl(\frac{\lambda^4 t-2 \lambda^2 |x-y|}{4 \nu}
\biggr) \\
&&{}\times\Erfc
\biggl(\frac{|x-y|-\lambda^2 t}{2
\sqrt{\nu t}} \biggr)
\nonumber
\end{eqnarray}
and
%
\begin{equation}
\label{E2:SecMom-Delta} \E \bigl[\bigl|u(t,x)\bigr|^2 \bigr] = \frac{1}{\lambda^2}
\calK(t,x)+\vv^2\calH(t).
\end{equation}
\end{corollary}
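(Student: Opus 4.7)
The plan is to apply Theorem \ref{T2:ExUni}(5), which applies since $\rho$ satisfies the quasi-linear growth condition \eqref{E1:qlinear}. With $\mu = \delta_0$, the homogeneous solution is $J_0(t,x) = G_\nu(t,x)$, so the identities \eqref{E2:SecMom} and \eqref{E2:TP} apply with this choice of $J_0$.

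For the second-moment formula \eqref{E2:SecMom-Delta}, the key observation is that $\calL_0(t,x) = \lambda^2 G_\nu^2(t,x)$ by \eqref{E:Def-Ln}, so the absorption identity \eqref{E:KS-Kn-K0} gives
$$
\left(G_\nu^2 \star \calK\right)(t,x) = \frac{1}{\lambda^2}\left(\calL_0 \star \calK\right)(t,x) = \frac{\calK(t,x) - \calL_0(t,x)}{\lambda^2} = \frac{\calK(t,x)}{\lambda^2} - G_\nu^2(t,x).
$$
Substituting into \eqref{E2:SecMom}, the two $G_\nu^2$ terms cancel and \eqref{E2:SecMom-Delta} follows.

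For the two-point function \eqref{E2:TP-Delta}, I would set $\tau = t$ in \eqref{E2:TP} and inspect \eqref{rd1}. Three simplifications occur. First, the convention $\Phi(|x-y|/0) = 1$ collapses the $\Phi$-difference on the third line of \eqref{rd1} to $-(\lambda^2 \vv^2 |x-y|/(2\nu))\,\Erfc(|x-y|/(2\sqrt{\nu t}))$. Second, the $(\tau-t)\,G_\nu(\tau-t, x-y)$ term vanishes at $\tau = t$, leaving only $2 \lambda^2 \vv^2 t\, G_\nu(2t, x-y)$ from the last line. Third, the bracketed integrand on the first line of \eqref{rd1} equals, by the already-established \eqref{E2:SecMom-Delta}, $\calK(r,z)/\lambda^2 + \vv^2 \calH(r)$. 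The heart of the calculation is then the double integral: I would use the Gaussian product identity
$$
G_\nu(t-r, x-z)\, G_\nu(t-r, y-z) = G_{2\nu}(t-r, x-y)\, G_{\nu/2}\!\left(t-r,\tfrac{x+y}{2} - z\right),
$$
together with the representation $\calK(r,z) = h(r)\, G_{\nu/2}(r, z)$ coming from \eqref{E:K2}, where $h(r) := \lambda^2/\sqrt{4\pi\nu r} + (\lambda^4/(4\nu))(\calH(r)+1)$. The semigroup property collapses the $z$-integral of $\calK(r, z)\, G_{\nu/2}(t-r, (x+y)/2 - z)$ to $h(r)\, G_{\nu/2}(t, (x+y)/2)$, while the $\vv^2 \calH(r)$ part integrates trivially in $z$. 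This reduces the double integral to two one-dimensional $r$-integrals of $h(r)$ and of $\calH(r)$ against the kernel $G_{2\nu}(t-r, x-y)$.

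These one-dimensional integrals are evaluated using the calculus lemmas gathered in Section \ref{sec4}, in the spirit of Lemma \ref{L2:IntHG} invoked in the proof of Corollary \ref{C2:TP-Lebesgue}. The main obstacle is purely computational bookkeeping: the various exponential and $\Erfc$ contributions produced by these integrals must combine with the boundary terms described above so that the net $\vv^2$-contribution reduces to the two terms displayed in \eqref{E2:TP-Delta} (the $-\vv^2\Erfc(|x-y|/(2\sqrt{\nu t}))$ and the $\vv^2$ summand multiplying the shifted $\Erfc$), while the remaining $(\lambda^2/(4\nu))\,G_{\nu/2}(t,(x+y)/2)$ prefactor multiplies the claimed exponential and shifted $\Erfc$. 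A useful self-consistency check along the way is that setting $x = y$ and using $\Erfc(0) = 1$ together with $\Erfc(-\lambda^2 \sqrt{t/(4\nu)}) = 2\Phi(\lambda^2 \sqrt{t/(2\nu)}) = \calH(t)+1$ and \eqref{E:K2} recovers \eqref{E2:SecMom-Delta}.
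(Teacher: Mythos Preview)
Your proposal is correct and follows essentially the same path as the paper: both use $\lambda^2 J_0^2=\calL_0$ with \eqref{E:KS-Kn-K0} for the second moment, and for the two-point function both reduce the double integral via the Gaussian product identity (Lemma \ref{L2:GG}), the semigroup property, and then Lemmas \ref{L2:IntHG} and \ref{L2:IntGG}. The only organizational difference is that the paper starts from the cleaner identity \eqref{E2_:TP} (with integrand $\vv^2+\lambda^{-2}\calK(r,z)+\vv^2\calH(r)$) rather than from \eqref{rd1}, so it never has to track and recombine the separated $\vv^2$-boundary terms you describe.
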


This corollary is proved in Section~\ref{Ss:EUM-OtherProof}.

\begin{remark}\label{R2:TP-Delta}
If $\rho(u)=u$ (i.e., $\lambda=1$ and $\vv=0$), then
\eqref{E2:SecMom-Delta} coincides with the result
by Bertini and Cancrini
\cite{BertiniCancrini94Intermittence}, (2.27) (see also
\cite{BorodinCorwin11MP,AmirCorwinQuastel11Stationary}):
$\E [|u(t,x)|^2 ] =\calK (t,x )$.
As for the two-point correlation function, Bertini and Cancrini gave the
following
integral (see \cite{BertiniCancrini94Intermittence}, Corollary~2.5):
%
\begin{eqnarray}
\label{E2:TP-Delta-BC} \E \bigl[u(t,x)u (t,y ) \bigr] &=& \frac{1}{2\pi\nu
t}\exp \biggl\{-
\frac{x^2+y^2}{2\nu t} \biggr\} \int_0^1 \,\ud s
\frac{|x-y|}{\sqrt{4\pi\nu t}} \frac{1}{\sqrt{s^3(1-s)}}
\nonumber\\
&&{}\times\exp \biggl\{-\frac{(x-y)^2}{4\nu t}\frac{1-s}{s} \biggr\} \\
&&{}\times\biggl(1+
\sqrt{\frac{\pi t (1-s)}{\nu}} \exp \biggl\{\frac{t}{2\nu} \frac{1-s}{2}
\biggr\} \Phi \biggl(\sqrt{\frac{t(1-s)}{2\nu}} \biggr) \biggr).
\nonumber
\end{eqnarray}
This integral can be evaluated explicitly (see Lemma~\ref{L2:TP-Delta-BC}
below) and coincides with \eqref{E2:TP-Delta} for $\vv=0$ and
$\lambda=1$.
\end{remark}

\begin{example}[(Higher moments for delta initial data)]
\label{Ex:GrowthDeta1}
Suppose that $\mu= \delta_0$ and $\Vip=0$. Let $p\ge2$ be an even
integer. Clearly, $J_0(t,x)\equiv G_\nu(t,x)$. Then by~\eqref{E2:SecMom-Up}
and \eqref{E:KS-Kn-K0},
\[
\E \bigl[ \bigl|u(t,x)\bigr|^p \bigr] \le2^{p-1} G_\nu^{p}(t,x)+2^{(p-2)/2}
\mathit{L}_\rho^{-p} z_p^{-p} \bigl
\llvert \widetilde{\calK}_p(t,x)\bigr\rrvert ^{p/2}.
\]
It follows from \eqref{E:K} and \eqref{E:z_p} that for all $x\in\R$,
$\overline{m}_p(x)\le\mathit{L}_\rho^4 z_p^4 p/(2\nu)
\le2^3 p^3 \mathit{L}_\rho^4 /\nu$.
Note that this upper bound is identical to the case of
the constant initial data (Example~\ref{Ex2:MomLeb}).
Concerning the growth indices, we see from \eqref{E:K} that
\[
\lim_{t\rightarrow+\infty}\frac{1}{t} \sup_{|x|> \alpha
t}\log
\E \bigl[\bigl|u(t,x)\bigr|^p \bigr] \le-\frac{\alpha^2 p}{2\nu} + \frac{\mathit{L}_\rho^4 p
z_p^4}{2\nu}\qquad \mbox{for all $\alpha\ge0$}.
\]
Hence, $\overline{\lambda}(p) \le z_p^2 \mathit{L}_\rho^2$. Similarly,
$\underline{\lambda}(2)\ge\mathit{l}_\rho^2/2$ after using \eqref
{E2:SecMom-Lower}.
Therefore, $\frac{\mathit{l}_\rho^2}{2} \le
\underline{\lambda}(p) \le\overline{\lambda}(p)
\le z_p^2 \mathit{L}_\rho^2$ for all even integers $p\ge2$.
The same bounds are obtained for more general
initial data in Theorem~\ref{T2:Growth}.
\end{example}

The following proposition shows
that initial data cannot be extended beyond
measures.

\begin{proposition}\label{P2:D-Delta}
Suppose that $\mu= \delta_0^{\prime}$ (the derivative of the
Dirac delta measure at zero). Let $\rho(u) =
\lambda u$ ($\lambda\ne0$). Then \eqref{E2:WalshSI}
does not have a random field solution.
\end{proposition}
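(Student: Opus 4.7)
The plan is to derive a contradiction by showing that the $L^2(\Omega)$-norm of any purported random field solution must be infinite at every $(t,x)$.

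First, I would interpret $J_0(t,x) = \int_\R G_\nu(t,x-y)\,\delta_0'(\ud y)$ through the standard duality pairing for the distributional derivative, which yields
$$J_0(t,x) = \partial_x G_\nu(t,x) = -\frac{x}{\nu t}G_\nu(t,x).$$
This is smooth and finite on $\R_+^*\times\R$, so the right-hand side of \eqref{E2:WalshSI} is at least formally well-defined.

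Second, I would assume for contradiction that a random field solution $u$ exists and set $f(t,x) := \E[u(t,x)^2]$. By Definition \ref{D2:Solution}(3) together with Walsh's $L^2$ theory applied to the stochastic integral $I(t,x)$, $f$ is finite on $\R_+^*\times\R$, and Itô's isometry combined with $\rho(u)=\lambda u$ gives
$$f(t,x) = J_0^2(t,x) + (\calL_0 \star f)(t,x), \qquad \calL_0(t,x) = \lambda^2 G_\nu^2(t,x).$$
Since $\calL_0 \star f \ge 0$ forces $f \ge J_0^2$, substituting this lower bound once on the right produces
$$f(t,x) \ge J_0^2(t,x) + (J_0^2 \star \calL_0)(t,x).$$

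Third, I would show that $(J_0^2 \star \calL_0)(t,x) = +\infty$ at every $(t,x)\in\R_+^*\times\R$, which contradicts the finiteness of $f$. Rewriting $G_\nu^2(s,y) = (4\pi\nu s)^{-1/2}G_{\nu/2}(s,y)$ and invoking the Brownian bridge second-moment identity
$$\int_\R y^2\, G_{\nu/2}(s,y)\,G_{\nu/2}(t-s,x-y)\,\ud y = G_{\nu/2}(t,x)\left(\frac{s^2 x^2}{t^2}+\frac{\nu s(t-s)}{2t}\right),$$
Fubini reduces the convolution to $G_{\nu/2}(t,x)$ times the sum of two time integrals, the second of which is a positive constant multiple of $\int_0^t s^{-3/2}(t-s)^{1/2}\,\ud s$ and hence diverges at $s=0$.

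The only nonroutine step is the one-line bridge identity; the conceptual point is that the $y^2/s^2$ factor in $J_0^2(s,y)$ creates a nonintegrable $s^{-3/2}$ singularity at the origin, reflecting precisely the fact that $\delta_0'$ falls outside the class $\calM_H(\R)$ of signed Borel measures covered by Theorem \ref{T2:ExUni}.
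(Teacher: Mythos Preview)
Your argument is correct and essentially identical to the paper's: both set $J_0(t,x)=-\frac{x}{\nu t}G_\nu(t,x)$, use \Itos isometry to get $\Norm{u(t,x)}_2^2\ge J_0^2(t,x)$, and then show that $\left(G_\nu^2\star J_0^2\right)(t,x)=+\infty$ via the same Gaussian product identity and the same $s^{-3/2}$ divergence at $s=0$. The only cosmetic difference is that the paper phrases the contradiction as a direct violation of Definition~\ref{D2:Solution}(3), whereas you conclude $f(t,x)=+\infty$; since $f=J_0^2+\left(G_\nu^2\star\Norm{\rho(u)}_2^2\right)$, these are the same statement.
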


The proof of this proposition is given in Section~\ref{Ss:EUM-OtherProof}.

\subsection{Growth indices}\label{SS:Main-Exp}
For $\sd\ge0$, define
\[
\calM_G^{\sd} (\R ):= \biggl\{\mu\in\calM (\R )\dvtx\int
_\R e^{\sd|x|}|\mu| (\ud x)<+\infty \biggr\}.
\]
Let $\calM_+ (\R )$ denote the set of nonnegative Borel measures
over $\R$,
\[
\calM_{G,+}^{\sd} (\R ) = \calM_G^{\sd}
(\R ) \cap \calM_+ (\R ) \quad\mbox{and}\quad \calM_{H,+} (\R ) =
\calM_H (\R ) \cap \calM_+ (\R ).
\]
Recall the definitions of $\underline{\lambda}(p)$
and $\overline{\lambda}(p)$ in \eqref{E1:GrowInd-0} and \eqref
{E1:GrowInd-1}.

\begin{theorem}\label{T2:Growth}
(1) Suppose that $|\rho(u)|^2\ge\mathit{l}_\rho^2 (\underline
{\varsigma}^2+u^2 )$ and
$p\ge2$.
If $\underline{\varsigma}=0$, then
$\underline{\lambda}(p)\ge\mathit{l}_\rho^2/2$ for all $\mu\in
\calM_{H,+} (\R )$ with $\mu\ne0$;
if $\underline{\varsigma}\ne0$, then $\underline{\lambda}(p) =
\overline{\lambda}(p)=+\infty$, for all $\mu\in
\calM_{H,+} (\R )$.

(2) If $|\rho(u)|^2\le\mathit{L}_\rho^2 (\Vip^2+u^2 )$
with $\Vip=0$
(which implies $\underline{\varsigma}= \vv= 0$) and $\mu\in
\calM_{G}^{\sd} (\R )$ for some $\sd> 0$, then for all even
integers $p\ge2$,
\[
\overline{\lambda}(p)\le
\cases{ \displaystyle\frac{\sd\nu}{2} +\frac{z_{p}^4\mathit{L}_\rho^4}{2\nu\sd},
 &\quad $
\mbox{if } 0\le\sd< \nu^{-1}z_{p}^2
\mathit{L}_\rho^2$,
\cr
z_{p}^2
\mathit{L}_\rho^2 , &\quad $\mbox{if } \sd\ge
\nu^{-1}z_{p}^2\mathit{L}_\rho^2$.}
\]
In addition,
\[
\overline{\lambda}(2)\le
 \cases{\displaystyle \frac{\sd\nu}{2} +\frac{\mathit{L}_\rho^4}{8\nu\sd} , &\quad $
\mbox{if }\displaystyle 0\le\sd< \frac{\mathit{L}_\rho^2}{2\nu} $,
\cr
\displaystyle\frac{1}{2}
\mathit{L}_\rho^2 , &\quad $\mbox{if }\displaystyle \sd\ge\frac{\mathit{L}_\rho^2}{2\nu}
$.}
\]

(3) Suppose that $|\rho(u)|^2=\lambda^2 (\vv^2+u^2 )$,
$\lambda
\ne
0$. If $\vv=0$ and $\sd\ge\frac{\lambda^2}{2\nu}$, then
$\underline{\lambda}(2)=\overline{\lambda}(2)=\lambda^2/2$
for all
$\mu\in\calM_{G,+}^{\sd} (\R )$ with $\mu\ne0$; if
$\vv\ne
0$, then
$\underline{\lambda}(p) =
\overline{\lambda}(p)=+\infty$ for all $\mu\in
\calM_{H,+} (\R )$ and $p\ge2$.
\end{theorem}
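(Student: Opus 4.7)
The three parts interlock: (3) will reduce to specializing (1) and (2) to the linear case $\lip_\rho = \Lip_\rho = \lambda$, so the substance is in (1) and (2). Both arguments will rest on the moment estimates from Theorem \ref{T2:ExUni} combined with the explicit asymptotics of $\calK$ from Proposition \ref{P2:K}. For part (2), since $\Vip = 0$ kills the $\widetilde{\calH}_p$ term in \eqref{E2:SecMom-Up}, I would first establish the Gaussian--exponential bound
\[
J_0(t,x) \leq C_\sd\, t^{-1/2}\, e^{\sd^2\nu t/2 - \sd|x|}, \qquad C_\sd := \tfrac{1}{\sqrt{2\pi\nu}}\int_\R e^{\sd|y|}|\mu|(\ud y),
\]
via $|x|\leq|x-y|+|y|$ and $\max_z(-z^2/(2\nu t)+\sd|z|) = \sd^2\nu t/2$. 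Next, writing $\widetilde{\calK}_p(s,y)=G_{\nu/2}(s,y)\, h_p(s)$ as in \eqref{E:K2} with $h_p(s) = O(e^{\lambda_p^4 s/(4\nu)})$, $\lambda_p = a_{p,0} z_p \Lip_\rho$, I would bound the spatial integral by the analogous estimate $\int_\R G_{\nu/2}(s,y) e^{-2\sd|x-y|}\,\ud y \leq C e^{-2\sd|x|+\sd^2\nu s}$. The resulting exponential rate for $\log\E[|u(t,x)|^p]$ along $|x|\geq \alpha t$ works out to $(p/2)\bigl(\sd^2\nu - 2\sd\alpha + \lambda_p^4/(4\nu)\bigr)$, negative when $\alpha > \sd\nu/2 + \lambda_p^4/(8\sd\nu)$; the minimum in $\sd$ sits at $\sd_*=\lambda_p^2/(2\nu)$ with value $\lambda_p^2/2$. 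Splitting into the interior ($\sd<\sd_*$) and saturated ($\sd\geq\sd_*$) regimes, and plugging $\lambda_p^4 = 4z_p^4\Lip_\rho^4$ (resp.\ $\Lip_\rho^4$) for $p>2$ (resp.\ $p=2$), will yield the two stated bounds.

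For part (1), I would start from \eqref{E2:SecMom-Lower} together with $\Norm{u(t,x)}_p^2 \geq \Norm{u(t,x)}_2^2$. If $\vip \neq 0$, the contribution $\vip^2\underline{\calH}(t) \sim 2\vip^2 e^{\lip_\rho^4 t/(4\nu)}$ is uniform in $x$, so the limsup in \eqref{E1:GrowInd-0} is at least $\lip_\rho^4/(4\nu) > 0$ for every $\alpha>0$, yielding $\underline{\lambda}(p) = \overline{\lambda}(p) = +\infty$ for free. If $\vip = 0$ and $\mu \neq 0$, I would fix $\alpha < \lip_\rho^2/2$, set $x = \alpha t$, and change variables $u = t-s$, $v = x-y$ in the convolution, restricting $(u,v)$ to a fixed compact $[0,T]\times[-V,V]$. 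The asymptotics of \eqref{E:K2} with $\lambda=\lip_\rho$ would then give
\[
\underline{\calK}(t-u,\, \alpha t - v) \geq c_1\, t^{-1/2}\, \exp\!\bigl(t\,[\lip_\rho^4/(4\nu) - \alpha^2/\nu]\bigr)
\]
uniformly in $(u,v)$ on that compact for $t$ large. Since $\mu$ is non-negative and non-zero, $J_0(u,v)>0$ for all $u>0$, $v\in\R$, so $\int_0^T\!\!\int_{-V}^V J_0^2(u,v)\,\ud v\,\ud u > 0$, and the convolution $(J_0^2 \star \underline{\calK})(t,\alpha t)$ is bounded below by a constant times that same exponential, whose rate $\lip_\rho^4/(4\nu) - \alpha^2/\nu$ is strictly positive iff $\alpha < \lip_\rho^2/2$; letting $\alpha\uparrow\lip_\rho^2/2$ finishes the claim.

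Part (3) then falls out: apply (2) with $\Lip_\rho = \lambda$, $\Vip = 0$, $p = 2$ and $\sd \geq \lambda^2/(2\nu)$ (which is precisely $\sd_*$), giving $\overline{\lambda}(2) \leq \lambda^2/2$; combine with (1) at $\lip_\rho = \lambda$, $\vip = 0$ to get $\underline{\lambda}(2) \geq \lambda^2/2$, hence equality. The case $\vv \neq 0$ drops out of (1) applied with $\vip = |\vv| \neq 0$. The hardest step is the $\vip=0$ lower bound in part (1): one must extract exactly the sharp rate $\lip_\rho^2/2$ from the competition between the Gaussian-decay exponent $-\alpha^2 t/\nu$ of the heat kernel at transverse distance $\alpha t$ and the exponential-growth exponent $\lip_\rho^4 t/(4\nu)$ of $\underline{\calK}$, for an arbitrary non-zero $\mu\in\calM_{H,+}(\R)$ with no peaking, compactness, or absolute-continuity assumption; localizing the $(u,v)$-integration to a compact region on which $J_0^2$ has strictly positive integral is what circumvents this, and it is only the explicit form \eqref{E:K2} of $\calK$ that makes the sharp rate visible.
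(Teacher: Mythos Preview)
Your outline for part~(3) and for the $\vip\ne 0$ case of part~(1) matches the paper. Your approach to the $\vip=0$ case of part~(1) is correct but genuinely different: the paper first bounds $J_0$ below by a scaled Gaussian $a_{\epsilon,\xi,\nu}\,G_\xi(t,x)$ with $\xi<\nu$ (Lemma~\ref{L2:pLbd}), computes the convolution with $\underline{\calK}$ explicitly, and then sends $\xi\uparrow\nu$; you instead restrict the $(s,y)$-integration to a compact box on which $\int J_0^2>0$ and read off the asymptotics of $\underline{\calK}(t-s,\alpha t-y)$ directly from \eqref{E:K}. Your route is more direct and avoids the auxiliary parameter $\xi$. (Minor slip: with $u=t-s$, $v=x-y$ it is $\underline{\calK}$ that lands at $(u,v)$, not $J_0^2$; you want $J_0^2$ on the fixed compact, so either leave $(s,y)$ as-is or swap the roles.)

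There is, however, a genuine gap in part~(2). Your pointwise bound $J_0^2(r,w)\le C_\sd^2\,r^{-1}\,e^{\sd^2\nu r-2\sd|w|}$ is correct (it is \eqref{E2:J0Bd-G}), but inserting it into $\bigl(J_0^2\star\widetilde{\calK}_p\bigr)(t,x)$ and carrying out your spatial estimate gives
\[
C\,e^{-2\sd|x|+\sd^2\nu t}\int_0^t \frac{h_p(s)}{t-s}\,\ud s,
\]
which diverges at $s=t$ since $h_p$ is bounded away from zero there; with the opposite convolution orientation one gets $\int_0^t s^{-1}h_p(t-s)\,\ud s$, divergent at $s=0$. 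You have identified the correct exponential rate, but no finite upper bound emerges from this argument. The paper avoids this by \emph{not} bounding $J_0^2$ pointwise first: it expands $J_0^2(s,y)=\iint G_\nu(s,y-z_1)G_\nu(s,y-z_2)\,\mu(\ud z_1)\mu(\ud z_2)$, applies Lemma~\ref{L2:GG} and the semigroup property so that the $y$-integral in $J_0^2\star K$ collapses to $G_{2\nu}(s,z_2-z_1)\,G_{\nu/2}(t,x-\bar z)$ (equation~\eqref{E2:InDt-Mid}), and then bounds the product of these two Gaussians via Lemma~\ref{L2:MaxGGE}. The key gain is that one Gaussian now carries the full time $t$ rather than $t-s$, so the polynomial prefactor becomes $t^{-1/2}s^{-1/2}$ instead of $(t-s)^{-1}$, and the remaining time integral $\int_0^t s^{-1/2}h(t-s)\,\ud s$ is finite; this is precisely the content of \eqref{E2:J0KBd-G} in Lemma~\ref{L2:InDt-G}.
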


This theorem generalizes the results in \cite{ConusKhosh10Farthest} in several
regards:
(i) more general initial data are allowed; (ii) both nontrivial upper bounds
and lower bounds are given (compare with \cite{ConusKhosh10Farthest}, Theorem~1.1)
for the Laplace operator case; (iii) for the parabolic Anderson model,
the exact transition is proved (see Theorem~1.3 and the first open
problem in
\cite{ConusKhosh10Farthest}) for $n=2$ and the Laplace operator case; (iv)
our discussions above cover the case where $\rho(0)\ne0$.
The lower bounds are proved in Section~\ref{SS2:ExpInd-Low}, the
upper bounds in Section~\ref{SS2:ExpInd-Up}.

\begin{example}[(Delta initial data)]
Suppose that $\Vip=\underline{\varsigma}=0$. Clearly, $\delta_0 \in
\calM_{G,+}^{\sd} (\R )$ for
all $\sd\ge0$. Hence, the above
theorem implies that for all even integers $k\ge2$,
$\frac{\mathit{l}_\rho^2}{2} \le\underline{\lambda}(k) \le
\overline{\lambda
}(k) \le
z_k^2 \mathit{L}_\rho^2$, which recovers the bounds in Example~\ref{Ex:GrowthDeta1}.
\end{example}

\begin{proposition}\label{P2:Ex-Exp}
Consider the parabolic Anderson model $\rho(u) =\lambda u$, \mbox{$\lambda
\ne
0$}, with
the initial data $\mu(\ud x) = e^{-\sd|x|}\,\ud x$ ($\sd>0$). Then
%
\begin{eqnarray}
\label{E2:Ex-Exp} \underline{\lambda}(2)=\overline{\lambda}(2) = \cases{
\displaystyle\frac{\sd\nu}{2} + \frac{\lambda^4}{8 \sd\nu},& \quad $\mbox{if $ 0<\sd\le
\displaystyle\frac{\lambda^2}{2 \nu}$}$,\vspace*{2pt}
\cr
\displaystyle\frac{\lambda^2}{2}, &\quad $\mbox{if $ \sd\ge
\displaystyle\frac{\lambda^2}{2 \nu}$}$.}
\end{eqnarray}
\end{proposition}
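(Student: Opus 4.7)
The plan has two parts: an upper bound on $\overline{\lambda}(2)$, which I extract from Theorem \ref{T2:Growth}, and a lower bound on $\underline{\lambda}(2)$, which requires a direct argument in one regime.

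First, since $\mu(\ud x)=e^{-\sd|x|}\ud x$ belongs to $\calM_{G,+}^{\sd'}(\R)$ for every $\sd'\in(0,\sd)$, applying Theorem \ref{T2:Growth}(2) with $\sd'$ and letting $\sd'\nearrow \sd$ yields $\overline{\lambda}(2)\le \sd\nu/2+\lambda^4/(8\nu\sd)$ when $\sd\le \lambda^2/(2\nu)$, and $\overline{\lambda}(2)\le \lambda^2/2$ when $\sd\ge \lambda^2/(2\nu)$. For the matching lower bound in the regime $\sd>\lambda^2/(2\nu)$, pick any $\sd'\in[\lambda^2/(2\nu),\sd)$; then $\mu\in\calM_{G,+}^{\sd'}$ and Theorem \ref{T2:Growth}(3) gives $\underline{\lambda}(2)\ge \lambda^2/2$. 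It therefore remains only to prove $\underline{\lambda}(2)\ge \sd\nu/2+\lambda^4/(8\nu\sd)$ for $0<\sd\le\lambda^2/(2\nu)$.

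For this remaining case, I would use the exact identity \eqref{E2:SecMom} with $\vv=0$, yielding $\E[u(t,x)^2]\ge (J_0^2\star\calK)(t,x)$, and then establish pointwise lower bounds on both factors. A routine Gaussian convolution gives the closed form
$$J_0(s,y)=\tfrac12 e^{\sd^2\nu s/2}\Bigl[e^{-\sd y}\Erfc\Bigl(\tfrac{\sd\nu s-y}{\sqrt{2\nu s}}\Bigr)+e^{\sd y}\Erfc\Bigl(\tfrac{\sd\nu s+y}{\sqrt{2\nu s}}\Bigr)\Bigr],$$
from which I extract $J_0^2(s,y)\ge \tfrac14 e^{\sd^2\nu s-2\sd y}$ whenever $y\ge \sd\nu s$, the first erfc being $\ge 1$ there. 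From formula \eqref{E:K2} together with $\Phi\ge 1/2$, one obtains $\calK(\tau,z)\ge \tfrac{\lambda^4}{4\nu}e^{\lambda^4\tau/(4\nu)}G_{\nu/2}(\tau,z)$. Plugging these bounds into the convolution, restricting the spatial integration to $y\ge \sd\nu(t-\tau)$, then translating $z=y-x$ and completing the square yields
$$(J_0^2\star\calK)(t,x)\ge \frac{\lambda^4}{32\nu}\,e^{\sd^2\nu t-2\sd x}\int_0^t e^{\lambda^4\tau/(4\nu)}\Erfc\Bigl(\tfrac{\sd\nu t-x}{\sqrt{\nu\tau}}\Bigr)\ud\tau;$$
the pivotal algebraic cancellation is $\sd^2\nu(t-\tau)+\sd^2\nu\tau=\sd^2\nu t$, which removes the $\tau$-dependence from the dominant exponential factor.

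To conclude, specialize to $x=\alpha t$ with $\alpha\ge \sd\nu$: the erfc argument is then nonpositive, so $\Erfc\ge 1$, and the $\tau$-integral is bounded below by $\tfrac{4\nu}{\lambda^4}(e^{\lambda^4 t/(4\nu)}-1)$. This yields a lower bound of order $\exp\{(\sd^2\nu+\lambda^4/(4\nu)-2\sd\alpha)t\}$, whose exponential rate is strictly positive precisely when $\alpha<\sd\nu/2+\lambda^4/(8\nu\sd)$. The main obstacle is the geometric compatibility of the two constraints: the lower bound on $J_0^2$ demands $y\ge \sd\nu s$, which in turn forces $\alpha\ge \sd\nu$, so the argument only produces a positive growth rate when $\sd\nu<\sd\nu/2+\lambda^4/(8\nu\sd)$. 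This last inequality is equivalent to the standing hypothesis $\sd\le \lambda^2/(2\nu)$, so the admissible window $[\sd\nu,\,\sd\nu/2+\lambda^4/(8\nu\sd))$ is non-empty in the remaining regime; letting $\alpha$ approach its upper endpoint gives $\underline{\lambda}(2)\ge \sd\nu/2+\lambda^4/(8\nu\sd)$, and the symmetry $\mu(-A)=\mu(A)$ extends the conclusion to $x<0$.
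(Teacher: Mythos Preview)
Your proof is correct and somewhat more streamlined than the paper's in the regime $\sd<\lambda^2/(2\nu)$. The paper handles \emph{all} $\sd>0$ in one self-contained lower-bound argument: it writes $J_0(t,x)=e^{\sd^2\nu t/2}E_{\nu t,-\sd}(x)$ via the smooth-exponential function of Proposition~\ref{P2:E} and uses the global bound $J_0^2(t,x)\ge e^{\sd^2\nu t}\Phi^2(-\sd\sqrt{\nu t})\,E_{\nu t,-2\sd}(x)$, valid for every $x$. This costs a decaying factor $\Phi^2(-\sd\sqrt{\nu t})$ and forces the introduction of an auxiliary parameter $c\in[0,1)$ in the time integral, later sent to $1$; the case analysis $\alpha\gtrless\sd\nu$ then arises from the asymptotics of $E_{\nu t/2,-2\sd}(\alpha t)$. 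Your half-line bound $J_0^2(s,y)\ge \tfrac14 e^{\sd^2\nu s-2\sd y}$ for $y\ge\sd\nu s$ avoids both the $\Phi^2$ loss and the $c$-limit, at the price of the geometric constraint $\alpha\ge\sd\nu$; you then offload the complementary regime $\sd\ge\lambda^2/(2\nu)$ to Theorem~\ref{T2:Growth}.

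One small gap: at the boundary $\sd=\lambda^2/(2\nu)$ your window $[\sd\nu,\,\sd\nu/2+\lambda^4/(8\nu\sd))$ is empty (the equivalence you quote is with the \emph{strict} inequality $\sd<\lambda^2/(2\nu)$), and your invocation of Theorem~\ref{T2:Growth}(3) via $\sd'\in[\lambda^2/(2\nu),\sd)$ is also vacuous there. This is trivially repaired: Theorem~\ref{T2:Growth}(1) already gives $\underline{\lambda}(2)\ge\lip_\rho^2/2=\lambda^2/2$ for any nonzero $\mu\in\calM_{H,+}(\R)$, which covers that boundary point---and in fact the entire regime $\sd\ge\lambda^2/(2\nu)$, so part~(3) is not even needed.
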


This proposition shows that for all $\sd\in
 \,]0,+\infty ]$, the exact phase transition occurs,
and hence our upper bounds
for $\overline{\lambda}(2)$ in Theorem~\ref{T2:Growth} are sharp.
See Section~\ref{SS2:ExpInd-Spe} for the proof.

\section{Proof of existence, uniqueness and moment estimates}
\label{S:Proof-ExUnMm}

\subsection{Some criteria for predictable random fields}\label{SS2:ProbSpace}
A random field $\{Z(t,x)\}$ is called \emph{elementary} if we can write
$Z(t,x)=Y 1_{]a,b]}(t) 1_{A}(x)$, where $0\le a<b$, $A\subset\R$ is an
interval, and $Y$ is an $\calF_a$-measurable random variable. A
\textit{simple}
process is a finite sum of elementary random fields. The set of simple processes
generates the \emph{predictable} $\sigma$-field on $\R_+\times\R
\times
\Omega$,
denoted by $\calP$. For $p\ge2$ and
$X\in L^2 (\R_+\times\R, L^p(\Omega) )$, set
%
\begin{equation}
\label{E2:Mp-norm} \Vert X\Vert_{M, p}^2  : =
\iint_{\R_+^*\times\R}\bigl\Vert X (s,y )\bigr\Vert_p^2\,\ud s\,\ud y<+
\infty.
\end{equation}
When $p=2$, we write $\Vert X\Vert_M$ instead of $\Vert X\Vert_{M,2}$.
In \cite{Walsh86}, $\iint X\,\ud W$ is defined for predictable $X$
such that $\Vert X\Vert_M<+\infty$. However, the condition of
predictability is not
always so easy to check, and as in the case of ordinary Brownian motion~\cite{ChungWilliams90}, Chapter~3, it is convenient to be able to integrate
elements $X$ that are merely jointly measurable and adapted. For this, let
$\calP_p$
denote the closure in $L^2 (\R_+\times\R, L^p(\Omega) )$
of simple
processes.
Clearly, $\calP_2\supseteq\calP_p \supseteq\calP_q$ for $2\le p\le
q<+\infty$,
and according to \Itos isometry, $\iint X\,\ud W$ is well defined for all
elements of $\calP_2$. The next proposition gives easily verifiable conditions
for checking that $X\in\calP_2$.

\begin{proposition}\label{P2:Pm-Ext}
Suppose that for some $t>0$ and $p\in[2,+\infty[ $, a random field
$X= \{ X (s,y ),  (s,y )\in\,]0,t[\times
\R
 \}$ has the following properties:
\begin{longlist}[(iii)]
\item[(i)] $X$ is adapted, that is, for all $ (s,y )\in
\,]0,t[\times
\R$,
$X (s,y )$ is $\calF_s$-measurable;
\item[(ii)] $X$ is jointly measurable with respect to
$\calB( ]0,t[ \times\R)\times\calF$;
\item[(iii)] $\Vert X(\cdot,\circ)1_{]0,t[}(\cdot )\Vert
_{M,p}<+\infty$.
\end{longlist}
Then $X(\cdot,\circ) 1_{]0,t[}(\cdot)$ belongs to $\calP_2$.
\end{proposition}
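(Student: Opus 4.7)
The plan is to exhibit $X \cdot 1_{]0,t[}$ as an $L^2(\R_+ \times \R \times \Omega)$-limit of simple processes, via a chain of standard approximations, each preserving both adaptedness and joint measurability. Since $p \ge 2$, Jensen's inequality gives $\Norm{X \cdot 1_{]0,t[}}_M \le \Norm{X \cdot 1_{]0,t[}}_{M,p} < +\infty$ by (iii), so the target function already lies in $L^2(\R_+ \times \R \times \Omega)$ and it suffices to approximate it in this space. The successive approximations are: (a) truncation in magnitude and spatial support; (b) one-sided mollification in time; (c) time discretization; (d) spatial approximation by step functions with adapted coefficients.

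For step (a), I would set $X_{N,M}(s,y) := X(s,y) \cdot 1_{\{|X(s,y)| \le N\}} \cdot 1_{[-M,M]}(y) \cdot 1_{]0,t[}(s)$; dominated convergence yields $X_{N,M} \to X \cdot 1_{]0,t[}$ in $L^2$ as $N,M \to \infty$, and each $X_{N,M}$ inherits (i) and (ii) from $X$. For step (b), choose $\psi \in C_c^\infty(\R)$ with $\psi \ge 0$, support in $[0,1]$, and $\int \psi = 1$; set $\psi_\epsilon(r) := \epsilon^{-1}\psi(r/\epsilon)$ and define
$$X_{N,M}^\epsilon(s,y) := \int_0^\epsilon \psi_\epsilon(r)\, X_{N,M}(s-r, y)\, \ud r.$$
The one-sided support of $\psi_\epsilon$ is essential: it ensures that $X_{N,M}^\epsilon(s,y)$ depends only on values of $X_{N,M}$ before time $s$, hence is $\calF_s$-measurable. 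Joint measurability follows from (ii) and Fubini, standard mollifier theory in $L^2$ gives $X_{N,M}^\epsilon \to X_{N,M}$ in $L^2$ as $\epsilon \to 0$, and the map $s \mapsto X_{N,M}^\epsilon(s,\cdot,\cdot)$ is continuous from $]0,t[$ into $L^2(\R \times \Omega)$.

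For steps (c) and (d), partition $]0,t[$ into intervals $]s_k, s_{k+1}]$ of mesh $t/n$ and set $Y_n(s,y) := \sum_k X_{N,M}^\epsilon(s_k, y)\, 1_{]s_k, s_{k+1}]}(s)$; the $L^2(\R\times\Omega)$-continuity in $s$ yields $Y_n \to X_{N,M}^\epsilon$ in $L^2$. Each slice $X_{N,M}^\epsilon(s_k, \cdot, \cdot)$ is $\calB(\R) \otimes \calF_{s_k}$-measurable (by Fubini applied to the mollifier integral), so it lies in the $L^2$-closure of finite sums $\sum_j Y_{k,j}\, 1_{A_{k,j}}(y)$ with $Y_{k,j}$ being $\calF_{s_k}$-measurable and $A_{k,j}$ bounded intervals of $\R$; substituting these approximations into $Y_n$ produces bona fide simple processes. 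A straightforward application of the triangle inequality then combines the four $L^2$-approximations into a single sequence converging to $X \cdot 1_{]0,t[}$ in $L^2$. The main technical obstacle is the preservation of $\calF_{s_k}$-measurability at the spatial discretization step, which hinges on the one-sided mollifier structure; a symmetric smoothing kernel would destroy adaptedness and invalidate the entire scheme.
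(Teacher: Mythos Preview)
Your proof is correct and follows essentially the same strategy as the paper's: truncate, apply a one-sided-in-time mollification to gain continuity while preserving adaptedness, then discretize to reach simple processes. The organizational difference is that the paper mollifies in \emph{both} time and space, obtaining sample-path continuity of $\widetilde{X}_n$, which combined with boundedness yields $L^p(\Omega)$-continuity at each point $(s,y)$ and allows a single space-time grid discretization (the paper's Step~1). You mollify only in time, obtaining continuity of $s\mapsto X_{N,M}^\epsilon(s,\cdot,\cdot)$ into $L^2(\R\times\Omega)$, and must then perform the spatial approximation separately in your step~(d). Both routes work; the paper's is slightly more concrete (pointwise estimates), yours slightly more functional-analytic (Bochner-valued continuity). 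One shared technicality: the assertion that the mollified process remains adapted---your ``by Fubini applied to the mollifier integral'' and the paper's one-line claim---implicitly uses that $X$ restricted to $[0,s]\times\R\times\Omega$ is $\calB\otimes\calB\otimes\calF_s$-measurable, which does not follow directly from (i) and (ii) but is obtained via a progressively measurable modification together with completeness of the filtration.
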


\begin{pf}
\emph{Step} 1. We first prove this proposition with (ii) replaced by:
\begin{longlist}[(ii$'$)]
\item[(ii$'$)] For all $ (s,y )\in\,]0,t[\times\R$,
$\Vert X(s,y)\Vert_p<+\infty$ and the function $ (s,y
)\mapsto
X (s,y )$ from
$]0,t[ \times\R$ into $L^p(\Omega)$ is continuous.
\end{longlist}
Fix $\varepsilon>0$ with $\varepsilon\le t/3$. Since
$\Vert X(\cdot,\circ)1_{]0,t[}(\cdot)\Vert_{M,p}<+\infty$,
choose $a=a(\varepsilon)>\max(t,2/t)$ large enough so that
\[
\iint_{ ([1/a,t-1/a] \times[-a,a] )^c} \bigl\Vert X (s,y )\bigr\Vert_p^2
1_{]0,t[}(s) \,\ud s \,\ud y<\varepsilon.
\]
Due to the $L^p(\Omega)$-continuity hypothesis in (ii$'$), we can choose
$n\in\bbN$ large enough so that for all
$(s_1,y_1), (s_2,y_2)\in[\varepsilon,t-\varepsilon]\times[-a,a]$,
\[
\max \bigl\{|s_1-s_2|,|y_1-y_2| \bigr\}
\le\frac{t-2/a}{n}\quad \Rightarrow\quad\bigl \Vert X(s_1,y_1)-X(s_2,y_2)
\bigr\Vert_p <\frac{\varepsilon}{a}.
\]
Choose $m\in\bbN$ large enough so that $a/m\le(t-2/a)/n$.
Set $t_j= \frac{j(t-2/a)}{n}+\frac{1}{a}$ with $j\in\{0,\ldots,n\}$ and
$x_i=\frac{i a}{m}-a$ with $i\in\{0,\ldots,2m\}$.
Then define
\[
X_{n,m}(t,x) = \sum_{j=0}^{n-1}
\sum_{i=0}^{2m-1} X(t_j,x_i)
1_{]t_j,t_{j+1}]}(t)1_{]x_i,x_{i+1}]}(x).
\]
Since $X$ is adapted, $X(t_j,x_i)$ is $\calF_{t_j}$-measurable,
and so $X_{n,m}$ is predictable, and clearly, $X_{n,m}\in\calP_p$.
Since $X_{n,m}(t,x)$ vanishes outside of the rectangle $[1/a,t-1/a]
\times
[-a,a]$, we have
\begin{eqnarray*}
\bigl\Vert X 1_{]0,t[}-X_{n,m}\bigr\Vert_{M, p}^2
&=& \iint_{ ([1/a,t-1/a] \times[-a,a] )^c} \bigl\Vert X (s,y )\bigr\Vert_p^2
1_{]0,t[}(s) \,\ud s \,\ud y
\\
&&{}+ \sum_{j=0}^{n-1} \sum
_{i=0}^{2m-1} \int_{t_j}^{t_{j+1}}
\int_{x_i}^{x_{i+1}}\bigl\Vert X(t_j,
x_i)-X (s,y )\bigr\Vert_p^2 \,\ud s \,\ud y
\\
&\le& \varepsilon+ \sum_{j=0}^{n-1} \sum
_{i=0}^{2m-1} \int_{t_j}^{t_{j+1}}
\int_{x_i}^{x_{i+1}} \frac{\varepsilon^2}{a^2} \,\ud s \,\ud y
\\
&=&\varepsilon+ \varepsilon^2 \frac{2at-4}{a^2} \le\varepsilon+
\frac{2\varepsilon^2 t}{a} \le\varepsilon+ 2\varepsilon^2.
\end{eqnarray*}
Therefore, $X(\cdot,\circ)1_{]0,t[}(\cdot)\in\calP_p\subseteq
\calP_2$.

\textit{Step} 2. Now we prove this proposition under (ii), assuming that
$X$ is
bounded.
Take a $\psi\in C_c^\infty(\R^2)$, nonnegative, such that
$\operatorname{supp} (\psi )\subset\,]0,t[\times\,]{-}1,1[$
and $\iint_{\R^2}\psi (s,y )\,\ud s\,\ud y =1$.
Let\vspace*{1pt} $\psi_n (s,y ) := n^2\psi(n s,n y)$ for each\vadjust{\goodbreak} $n\in
\bbN^*$, and
$\widetilde{X}_n (s,y ) := (\psi_n\star X
)
(s,y )$
for all $ (s,y )\in\,]0,t[\times\R$.
Note that when we do the convolution in time, $X (s,y )$ is
understood to be zero for $s\notin\,]0,t[$.

We shall first prove that
$\widetilde{X}_n(\cdot,\circ)1_{]0,t[}(\cdot)\in\calP_{2}$
for all $n\in\bbN^*$
and
%
\begin{equation}
\label{E2:Pm-Ext} \bigl\Vert\widetilde{X}_n(\cdot,\circ) 1_{]0,t[}
\bigr\Vert_{M,2} \le \bigl\Vert X(\cdot,\circ) 1_{]0,t[}
\bigr\Vert_{M,2}<+\infty.
\end{equation}
The inequality \eqref{E2:Pm-Ext} is true since, by H\"{o}lder's inequality,
\[
\bigl\Vert\widetilde{X}_n(\cdot,\circ)1_{]0,t[}(\cdot)
\bigr\Vert_{M,2}^2 \le \iint_{[0,t]\times\R}\,\ud s\,\ud y
\iint_{\R^2}\E \bigl(X^2(u,z) \bigr)\psi_n(s-u,y-z)
\,\ud u\,\ud z,
\]
which is less than $\Vert X(\cdot,\circ)1_{]0,t[}(\cdot)\Vert
_{M,2}^2$ and
is finite
by property (iii).

The condition that
$\operatorname{supp} (\psi )\subset\R_+^*\times\R$,
together with the joint
measurability of $X$, ensures that $\widetilde{X}_n$ is still adapted.
The sample path continuity of $\widetilde{X}_n$ in both the space and time
variables implies $L^2(\Omega)$-continuity, thanks to the boundedness of
$X$.
Hence, we can apply step 1 to conclude that
$\widetilde{X}_n(\cdot,\circ)1_{]0,t[}(\cdot)\in\calP_{2}$, for all
$n\in\bbN^*$.

Property (iii) implies that there is $\Omega'\subseteq\Omega$ such that
$P(\Omega')=1$ and for all $\omega\in\Omega'$, $X(\cdot,\circ
,\omega)\in
L^2(]0,t[\times\R)$. Now fix $\omega\in\Omega'$. Then
\[
\lim_{n\rightarrow+\infty} \bigl\Vert\widetilde{X}_n(\cdot,\circ,
\omega)-X(\cdot,\circ,\omega )\bigr\Vert_{L^2(]0,t[
\times\R)}= 0
\]
and
\[
\bigl\Vert\widetilde{X}_n(\cdot,\circ,\omega)\bigr\Vert_{L^2(]0,t[\times\R)} \le
\bigl\Vert X(\cdot,\circ,\omega)\bigr\Vert_{L^2(]0,t[\times\R)}
\]
(see, e.g., \cite{Adam03SecondEd}, Theorem~2.29(c)).
Thus, by Lebesgue's dominated convergence theorem, which applies by (iii),
\[
\lim_{n\rightarrow\infty}\E \bigl[\bigl\Vert\widetilde{X}_n(\cdot ,
\circ) -X(\cdot,\circ)\bigr\Vert_{L^2(]0,t[\times\R)}^2 \bigr] =0.
\]
We conclude that $X(\cdot,\circ)1_{]0,t[}(\cdot)\in\calP_{2}$.

\textit{Step} 3.
Now we consider a general $X$ satisfying (i), (ii) and (iii). For
$M>0$, denote
\[
X^M(s,y,\omega)1_{]0,t[}(s) = \cases{ X(s,y,\omega)
1_{]0,t[}(s), &\quad $\mbox{if $\bigl\llvert X(s,y,\omega)\bigr\rrvert \le M$,}$
\cr
0,&\quad $\mbox{otherwise.}$}
\]
Since each $X^M(\cdot,\circ)1_{]0,t[}(\cdot)$ is bounded, satisfies
(i), (ii) and (iii), and\break 
$X^M(\cdot,\circ)1_{]0,t[}(\cdot) \rightarrow
X(\cdot,\circ)1_{]0,t[}(\cdot)$ in
$\Vert\cdot\Vert_{M,2}$ as $M\rightarrow+\infty$
(by Lebesgue's dominated convergence theorem),
we conclude from step 2 that $X(\cdot,\circ)1_{]0,t[}(\cdot)\in
\calP_{2}$.
\end{pf}

\begin{remark}
The step 1 in the proof of Proposition~\ref{P2:Pm-Ext} is an extension (but
specialized to space--time white noise) of Dalang and Frangos's result in
\cite{DalangFrangos98}, Proposition~2, since the second moment of
$X$ can explode at $s=0$ or $s=t$.
\end{remark}

\subsection{$L^p$-bounds on stochastic convolutions}
We will need an extension of \cite{ConusKhosh10Farthest}, Lemma~2.4, to
allow all
adapted, jointly measurable and integrable random fields (see also
\cite{FoondunKhoshnevisan08Intermittence}, Lemma~3.4).

\begin{lemma}
\label{L2:Lp}
Let $\calG(s,y)$ be a deterministic measurable function from $\R_+^*
\times\R$
to $\R$ and let $Z= (Z (s,y ),
 (s,y )\in\R_+^* \times\R )$ be a process with the
following properties:
\begin{longlist}[(1)]
\item[(1)]$Z$ is adapted and jointly measurable with respect to
$\calB(\R_+^*\times\R)\times\calF$;
\item[(2)]$\E [\iint_{[0,t]\times\R} \calG^2 (t-s,x-y )
Z^2 (s,y )\,\ud
s\,\ud
y ]<\infty$,
for all $(t,x)\in\R_+\times\R$.
\end{longlist}
%
Then for each $(t,x)\in\R_+\times\R$,
the random field $ (s,y )\in\,]0,t[ \times\R\mapsto
\calG (t-s,x-y ) Z (s,y )$ belongs to $\calP_2$
and so
the stochastic convolution
%
\begin{equation}
\label{E2:StoCon} (\calG\star Z\dot{W} ) (t,x) := \iint_{[0,t]\times\R} \calG
(t-s,x-y )Z (s,y )W (\ud s,\ud y )
\end{equation}
is a well-defined Walsh integral and the random field $\calG\star Z\W
$ is
adapted.
Moreover, for all even integers $p\ge2$ and $(t,x)\in\R_+ \times\R$,
\[
\bigl\Vert (\calG\star Z\dot{W} ) (t,x)\bigr\Vert_p^2 \le
z_{p}^{2} \bigl\Vert\calG(t-\cdot,x-\circ) Z(\cdot,\circ)
\bigr\Vert_{M,p}^2.
\]
\end{lemma}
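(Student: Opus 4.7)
My plan is to split the lemma into two parts: (a) establishing that the integrand lies in $\calP_2$ so that the Walsh integral is well-defined and adapted, and (b) proving the $L^p$ bound via Burkholder-Davis-Gundy and Minkowski's integral inequality.

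For part (a), I would apply Proposition \ref{P2:Pm-Ext} with exponent $p=2$ to the random field $X(s,y) := \calG(t-s,x-y)Z(s,y)$, restricted to $s \in \;]0,t[$. Adaptedness (hypothesis (i)) holds because $Z(s,y)$ is $\calF_s$-measurable by assumption and $\calG(t-s,x-y)$ is deterministic. Joint measurability (hypothesis (ii)) follows from the joint measurability of $Z$ and the Borel measurability of $\calG$. The finiteness condition (hypothesis (iii)) with $p=2$ is precisely hypothesis (2) of the lemma. Proposition \ref{P2:Pm-Ext} then yields $X(\cdot,\circ)1_{]0,t[}(\cdot)\in\calP_2$, so the Walsh stochastic integral in \eqref{E2:StoCon} is well-defined and, by the standard construction via approximating simple processes, adapted in $(t,x)$.

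For part (b), I may assume the right-hand side is finite (otherwise the bound is trivial), so in particular $\Norm{\calG(t-\cdot,x-\circ)Z(\cdot,\circ)}_{M,p}<+\infty$. The Burkholder-Davis-Gundy inequality for Walsh martingale measures (cited through the constant $z_p$ in \eqref{E:z_p}) gives
\begin{equation*}
\E\left[\left|(\calG\star Z\W)(t,x)\right|^p\right] \le z_p^p\,\E\left[\left(\iint_{[0,t]\times\R} \calG^2(t-s,x-y) Z^2(s,y)\,\ud s\,\ud y\right)^{p/2}\right].
\end{equation*}
Since $p/2\ge 1$, Minkowski's integral inequality applied to the $L^{p/2}(\Omega)$ norm of the nonnegative integrand $\calG^2(t-s,x-y)Z^2(s,y)$ yields
\begin{equation*}
\left\|\iint \calG^2 Z^2\,\ud s\,\ud y\right\|_{p/2} \le \iint \calG^2(t-s,x-y)\,\Norm{Z(s,y)}_p^2\,\ud s\,\ud y = \Norm{\calG(t-\cdot,x-\circ)Z(\cdot,\circ)}_{M,p}^2.
\end{equation*}
Combining and taking square roots gives the stated bound.

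The only delicate point is part (a): showing the integrand lies in $\calP_2$ without assuming predictability a priori, and this is exactly the reason Proposition \ref{P2:Pm-Ext} was established. Once that is in hand, BDG and Minkowski are routine, and the adaptedness of $\calG\star Z\W$ in $(t,x)$ follows from the standard property that the Walsh integral up to time $t$ is $\calF_t$-measurable.
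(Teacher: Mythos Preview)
Your proposal is correct and follows essentially the same approach as the paper: the paper's proof simply notes that the argument of \cite[Lemma 2.4]{ConusKhosh10Farthest} (which is precisely BDG followed by Minkowski) goes through once Proposition~\ref{P2:Pm-Ext} is invoked to place the integrand in $\calP_2$ without assuming predictability. You have spelled out exactly these two ingredients.
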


We note that \cite{ConusKhosh10Farthest} assumes that $Z$ is predictable.
However, using Proposition~\ref{P2:Pm-Ext}, the proof of this lemma is
the same
as that of \cite{ConusKhosh10Farthest}.


\begin{proposition}\label{P2:Picard}
Suppose that for some even integer $p\in[2,+\infty[ $, a random field
$Y= (Y(t,x),
(t,x)\in\R_+^*\times\R )$
has the following three properties:
\begin{longlist}[(iii)]
\item[(i)]$Y$ is adapted;
\item[(ii)]$Y$ is jointly measurable with respect to
$\calB (\R_+^*\times\R )\times\calF$;
\item[(iii)] for all $(t,x)\in\R_+^*\times\R$,
$\Vert G_\nu(t-\cdot,x-\circ)Y(\cdot,\circ)\Vert_{M,p}^2
<+\infty$.
\end{longlist}
Then for all $(t,x)\in\R_+^*\times\R$,
$G_\nu(t-\cdot,x-\circ)Y(\cdot,\circ)\in\calP_2$ and the random field
\[
w(t,x)=\iint_{]0,t[\times\R} G_\nu (t-s,x-y )Y (s,y ) W(\ud s,\ud y)
\]
has the property that if $Y$ has locally bounded $p$th moments, that
is, for
$K\subset
\R_+^*\times\R$ compact,
%
\begin{equation}
\label{E2:Ytxlb} \sup_{(t,x)\in K}\bigl\Vert Y(t,x)\bigr\Vert_{p}<+
\infty,
\end{equation}
which is the case if $Y$ is $L^p(\Omega)$-continuous, then $w$ is
$L^p(\Omega)$-continuous on \mbox{$\R_+^*\times\R$}.
\end{proposition}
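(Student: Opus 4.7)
The first assertion is immediate: conditions (i)--(iii) of the proposition are exactly the hypotheses of Lemma \ref{L2:Lp} applied with $\calG=G_\nu$ and $Z=Y$, so that lemma yields $G_\nu(t-\cdot,x-\circ)Y(\cdot,\circ)\in\calP_2$ and simultaneously supplies the BDG-type bound
\begin{equation*}
\Norm{w(t,x)}_p^{2}\;\le\; z_p^{2}\,\Norm{G_\nu(t-\cdot,x-\circ)Y(\cdot,\circ)}_{M,p}^{2},
\end{equation*}
which, applied to increments, will be the workhorse of the continuity argument.

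To prove $L^p(\Omega)$-continuity at $(t,x)\in\R_+^*\times\R$, the plan is to fix a compact neighborhood $K\subset\R_+^*\times\R$ of $(t,x)$ on which local boundedness gives $\sup_{(s,y)\in K}\Norm{Y(s,y)}_p\le M_K$, and, for $(t',x')\in K$ with $t'\ge t$ (the other case is symmetric), decompose
\[
w(t',x')-w(t,x)\;=\;A(t',x')+B(t',x'),
\]
with
\begin{align*}
A(t',x') &= \iint_{[0,t]\times\R}\bigl[G_\nu(t'-s,x'-y)-G_\nu(t-s,x-y)\bigr]Y(s,y)\,W(\ud s,\ud y),\\
B(t',x') &= \iint_{[t,t']\times\R} G_\nu(t'-s,x'-y)\,Y(s,y)\,W(\ud s,\ud y).
\end{align*}
Both integrands lie in $\calP_2$ by the first part, and Lemma \ref{L2:Lp} gives
\begin{align*}
\Norm{A(t',x')}_p^{2} &\le z_p^{2}\iint_{[0,t]\times\R}\bigl[G_\nu(t'-s,x'-y)-G_\nu(t-s,x-y)\bigr]^{2}\Norm{Y(s,y)}_p^{2}\,\ud s\,\ud y,\\
\Norm{B(t',x')}_p^{2} &\le z_p^{2}\iint_{[t,t']\times\R} G_\nu^{2}(t'-s,x'-y)\,\Norm{Y(s,y)}_p^{2}\,\ud s\,\ud y.
\end{align*}

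To show $\Norm{B(t',x')}_p\to 0$, I would split the spatial integration at $|y-x|=R$. On $|y-x|\le R$, local boundedness together with the identity $\int_\R G_\nu^{2}(u,z)\,\ud z=(4\pi\nu u)^{-1/2}$ yields a bound of order $\sqrt{t'-t}\to 0$. On $|y-x|>R$, for $(t',x')$ close to $(t,x)$ and $t'-s$ small the Gaussian tail of $G_\nu^{2}(t'-s,x'-y)$ allows one to dominate the integrand by a constant multiple of $G_\nu^{2}(t^{*}-s,x-y)\Norm{Y(s,y)}_p^{2}$ for some $t^{*}$ slightly exceeding $t$, whose integral is finite by hypothesis (iii) at $(t^{*},x)$, so the tail becomes arbitrarily small for $R$ large. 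To show $\Norm{A(t',x')}_p\to 0$, observe that the integrand converges pointwise to $0$ on $[0,t[\,\times\R$ as $(t',x')\to(t,x)$, so it suffices to apply Lebesgue's dominated convergence theorem. Starting from $[G_\nu(t'-s,x'-y)-G_\nu(t-s,x-y)]^{2}\le 2G_\nu^{2}(t'-s,x'-y)+2G_\nu^{2}(t-s,x-y)$, the second summand integrates to a finite value against $\Norm{Y}_p^{2}$ by (iii) at $(t,x)$, and the first can be dominated uniformly in $(t',x')$ in a small neighborhood by a constant multiple of $G_\nu^{2}(t^{*}-s,x-y)$, whose integral against $\Norm{Y}_p^{2}$ is finite by (iii) at $(t^{*},x)$.

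The principal technical obstacle is producing this uniform majorant for $A$, together with handling the spatial tail in $B$: both rely crucially on the fact that hypothesis (iii) is assumed at \emph{every} point of $\R_+^*\times\R$, so that invoking it at a point slightly ahead in time controls all the heat kernels $G_\nu(t'-s,x'-\cdot)$ attached to neighbors of $(t,x)$. Once the majorants are in place, dominated convergence applied to $\Norm{A}_p^{2}$ and $\Norm{B}_p^{2}$ separately delivers the claimed $L^p(\Omega)$-continuity of $w$, and the remark that $L^p(\Omega)$-continuity of $Y$ implies local boundedness of $\Norm{Y(\cdot,\circ)}_p$ by continuity of $Y$ on compacts is elementary.
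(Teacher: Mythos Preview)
Your overall strategy---split $w(t',x')-w(t,x)$ into the ``common time interval'' piece $A$ and the ``extra strip'' piece $B$, bound each via the BDG inequality of Lemma~\ref{L2:Lp}, and use a combination of spatial tail domination and local boundedness---is exactly the paper's approach, and your treatment of $B$ is essentially correct.

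There is, however, a genuine gap in your handling of $A$. The claimed domination $G_\nu^{2}(t'-s,x'-y)\le C\,G_\nu^{2}(t^{*}-s,x-y)$, uniform in $(t',x')$ near $(t,x)$ and for \emph{all} $(s,y)\in[0,t]\times\R$, is false. Take $x'=x$, $y=x$ and $s\uparrow t$: the left side equals $(2\pi\nu(t'-s))^{-1}$, which is at least $(2\pi\nu(t'-t))^{-1}\to\infty$ as $t'\downarrow t$, while the right side is bounded by $C(2\pi\nu(t^{*}-t))^{-1}$. So no single integrable majorant independent of $(t',x')$ can be built from $G_\nu^{2}(t^{*}-\cdot,x-\circ)$ alone, and dominated convergence does not apply as written.

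The remedy is precisely the spatial split you already perform on $B$: the paper chooses (via Proposition~\ref{P2:G-Margin}) a threshold $a>0$ such that $G_\nu(t'-s,x'-y)\le G_\nu(t+1-s,x-y)$ holds for all $(t',x')$ in a fixed neighborhood of $(t,x)$, all $s\in[0,t']$, and all $|y|\ge a$. Outside the box $K_a=[1/a,t+1]\times[-a,a]$ this supplies the uniform majorant needed for dominated convergence (using hypothesis~(iii) at $(t+1,x)$). Inside $K_a$, local boundedness gives $\sup_{K_a}\Norm{Y}_p^{2}=:A_a<\infty$, and then the quantitative heat-kernel increment estimates of Proposition~\ref{P2:G} yield
\[
\iint_{K_a}\bigl[G_\nu(t'-s,x'-y)-G_\nu(t-s,x-y)\bigr]^{2}\Norm{Y(s,y)}_p^{2}\,\ud s\,\ud y
\;\le\; A_a\,C\bigl(|x-x'|+\sqrt{|t-t'|}\bigr)\to 0.
\]
In short, you must split $A$ in the same way you split $B$; the Gaussian domination is only valid in the spatial tail, and near the diagonal one has to fall back on local boundedness of $\Norm{Y}_p$ together with the $L^2$-increment bounds for the heat kernel.
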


Before proving this proposition, we need the following proposition.

\begin{proposition}\label{P2:G}
There are three universal constants $C_1=1$,
$C_2 =\frac{\sqrt{2}-1}{\sqrt{\pi}}$, and
$C_3 = \frac{1}{\sqrt{\pi}}$,
such that for all $s,t$ with $0\le s\le t$ and $x\in\R$,
%
\begin{eqnarray}
\label{E2:G-x} &&\int_0^t\,\ud r\int
_\R\,\ud z \bigl[G_\nu(t-r,x-z)-G_\nu
(t-r,y-z) \bigr]^2 \le\frac{C_1}{\nu} |x-y| ,
\\
\label{E2:G-t1}&& \int_0^s\,\ud r\int
_\R\,\ud z \bigl[G_\nu(t-r,x-z)-G_\nu
(s-r,x-z) \bigr]^2 \le\frac{C_2}{\sqrt{\nu}} \sqrt{t-s} ,
\\
\label{E2:G-t2}&& \int_s^t\,\ud r\int
_\R\,\ud z \bigl[G_\nu(t-r,x-z)
\bigr]^2 \le\frac{C_3}{\sqrt{\nu}} \sqrt{t-s} ,
\\
\nonumber
&&\iint_{\R_+\times\R} \bigl(G_\nu(t-r,x-z)-G_\nu(s-r,y-z)
\bigr)^2 \,\ud r \,\ud z \\
&&\qquad\le 2C_1 \biggl(\frac{|x-y|}{\nu} +
\frac{\sqrt{|t-s|}}{\sqrt{\nu
}} \biggr) ,\nonumber
\end{eqnarray}
where we use the convention that $G_\nu(t,\cdot)\equiv0$ if $t\le0$.
\end{proposition}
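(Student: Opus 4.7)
My plan is to prove each of the four estimates using Plancherel's identity applied to the Fourier transform $\widehat{G_\nu(u,\cdot)}(\xi) = e^{-\nu u \xi^2/2}$, which reduces every spatial $L^2$ integral to an explicit one-variable computation.

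For \eqref{E2:G-t2}, I would use the direct Gaussian calculation $\int_\R G_\nu(u,z)^2\,dz = 1/(2\sqrt{\pi\nu u})$, after which $\int_s^t (t-r)^{-1/2}\,dr = 2\sqrt{t-s}$ immediately delivers $C_3=1/\sqrt{\pi}$. For \eqref{E2:G-x}, Plancherel gives
\[
\int_\R[G_\nu(u,x-z)-G_\nu(u,y-z)]^2\,dz = \frac{1}{\pi}\int_\R (1-\cos(\xi(x-y)))\,e^{-\nu u \xi^2}\,d\xi,
\]
and integrating in $u\in[0,t]$ produces the factor $(1-e^{-\nu t\xi^2})/(\nu\xi^2)$, which I bound crudely by $(\nu\xi^2)^{-1}$. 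The classical identity $\int_\R \xi^{-2}(1-\cos(a\xi))\,d\xi = \pi|a|$ then yields $C_1 = 1$.

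For \eqref{E2:G-t1}, Plancherel turns the spatial integral into $\frac{1}{2\pi}\int_\R e^{-\nu(s-r)\xi^2}(1-e^{-\nu(t-s)\xi^2/2})^2\,d\xi$ after factoring $e^{-\nu(t-r)\xi^2/2} = e^{-\nu(s-r)\xi^2/2}\cdot e^{-\nu(t-s)\xi^2/2}$. The $r$-integral on $[0,s]$ of $e^{-\nu(s-r)\xi^2}$ is again bounded by $(\nu\xi^2)^{-1}$, so the problem reduces to evaluating
\[
\int_0^\infty \frac{(1-e^{-a\xi^2})^2}{\xi^2}\,d\xi = \sqrt{2\pi a}(\sqrt{2}-1),
\]
which I would verify by differentiating in $a$ (producing $2\int_0^\infty(e^{-a\xi^2}-e^{-2a\xi^2})\,d\xi = \sqrt{\pi/a}(1-1/\sqrt{2})$) and integrating back using $f(0)=0$. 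Taking $a = \nu(t-s)/2$ gives the sharp constant $C_2=(\sqrt{2}-1)/\sqrt{\pi}$. This evaluation is the only step beyond textbook Gaussian identities, and is the main technical obstacle.

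For the combined bound, I would split the $r$-integration at $s$. On $[s,t]$ the convention $G_\nu(s-r,\cdot)\equiv 0$ reduces the integrand to $G_\nu(t-r,x-z)^2$, which is controlled by \eqref{E2:G-t2}. On $[0,s]$ I insert $\pm G_\nu(t-r,y-z)$ and apply $(a+b)^2 \le 2a^2 + 2b^2$ to split the integral, bounding one piece by \eqref{E2:G-x} (extended from $[0,s]$ to $[0,t]$, increasing the integral since the integrand is non-negative) and the other by \eqref{E2:G-t1}. A quick numerical check $2C_2 + C_3 = (2\sqrt{2}-1)/\sqrt{\pi} \approx 1.03 < 2 = 2C_1$ confirms that the resulting sum fits under the stated constant $2C_1$, completing the argument.
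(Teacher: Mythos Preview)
Your proof is correct. All four estimates follow exactly as you outline: the Plancherel computations for \eqref{E2:G-x} and \eqref{E2:G-t1} are accurate (in particular your evaluation $\int_0^\infty \xi^{-2}(1-e^{-a\xi^2})^2\,\ud\xi=\sqrt{2\pi a}(\sqrt{2}-1)$ via differentiation in $a$ is valid), the direct calculation for \eqref{E2:G-t2} is standard, and the splitting argument for the combined bound, together with the numerical check $2C_2+C_3=(2\sqrt{2}-1)/\sqrt{\pi}<2=2C_1$, closes the proof.

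The paper does not actually give a proof of this proposition: it refers to the first author's thesis \cite[Proposition 2.3.9]{LeChen13Thesis} and remarks that obtaining the optimal constant $C_1=1$ requires a ``slight change in the last lines'' of that argument. Your Fourier-analytic route is self-contained and delivers all three sharp constants directly, so in this sense it is at least as efficient as what the paper points to; there is nothing to correct.
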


\begin{remark}
Similar estimates can be found in, for example,
\cite{Shiga94Two}, Lem\-ma~6.2, and
\cite{Khoshnevisan09Mini}, Theorem~6.7. The above is a slight improvement because all three
constants are best possible.
Since the values of these constants are not essential here,
we refer to \cite{LeChen13Thesis}, Proposition~2.3.9,
for the proof. Note that $C_1 = 1$ was not obtained in this reference,
but with
a slight change in the last lines of the proof of~\cite
{LeChen13Thesis}, Proposition~2.3.9(i), the value $C_1 = 1$ can be obtained, and
this is
optimal.
\end{remark}

\begin{pf*}{Proof of Proposition~\ref{P2:Picard}}
Fix $(t,x)\in\R_+^*\times\R$.
Clearly,
$X= (X(s,y), \break   (s,y )\in\,]0,t[ \times\R )$
with
$X (s,y )=Y (s,y )G_\nu (t-s,x-y )$
satisfies all conditions of Proposition~\ref{P2:Pm-Ext}.
This implies that for all $(t,x)\in\R_+^*\times\R$,
$Y(\cdot,\circ)G_\nu(t-\cdot,x-\circ)\in\calP_2$. Hence $w(t,x)$
is a
well-defined Walsh integral and the
resulting random field is adapted
to the filtration $\{\calF_s, s\ge0\}$.

Now we shall prove the $L^p(\Omega)$-continuity.
Fix $(t,x)\in\R_+^*\times\R$. Let $B_{t,x}$ and $a$ denote, respectively,
the set
and the constant defined in Proposition~\ref{P2:G-Margin}.
We assume that $ (t',x' )\in B_{t,x}$.
Denote
\[
( t_*,x_* ) = \cases{ \bigl(t',x' \bigr), &\quad $\mbox{if
$t'\le t$,}$
\cr
(t,x), & \quad $\mbox{if $t'> t$,}$}
\quad\mbox{and}\quad
 (\hat{t},\hat{x} ) = \cases{ (t,x), &\quad $\mbox{if $t'\le
t$,}$
\cr
\bigl(t',x' \bigr), & \quad$\mbox{if
$t'> t$.}$}
\]
Set $K_a=[1/a,t+1]\times[-a,a]$. Let $A_a=\sup_{ (s,y )\in K_a}
\Vert Y (s,y )\Vert_p^2$, which is finite by \eqref{E2:Ytxlb}.
By Lemma~\ref{L2:Lp}, we have
\begin{eqnarray*}
&&\bigl\Vert w(t,x)-w \bigl(t',x' \bigr)
\bigr\Vert_p^p
\\
& &\qquad\le 2^{p-1} z_p^p \biggl(\int
_0^{t_*}\int_{\R} \bigl\Vert Y
(s,y )\bigr\Vert_p^2 \bigl(G_\nu(t-s,
x-y)\\
&&\qquad\quad{}-G_\nu\bigl(t'-s , x'-y\bigr)
\bigr)^2 \,\ud s \,\ud y \biggr)^{p/2}
\\
&&\qquad\quad{} +2^{p-1} z_p^p \biggl( \int
_{t_*}^{\hat{t}}\int_{\R}\bigl \Vert Y
(s,y )\bigr\Vert_p^2 G_\nu^2 (
\hat{t}-s,\hat {x}-y )\,\ud s \,\ud y \biggr)^{p/2}
\\
&&\qquad \le2^{p-1} z_p^p \bigl( L_1
\bigl(t,t',x,x' \bigr) \bigr)^{p/2}+2^{p-1}
z_p^p \bigl( L_2 \bigl(t,t',x,x'
\bigr) \bigr)^{p/2}.
\end{eqnarray*}

We first consider $L_1$. Write $L_1=L_{1,1} (t,t',x,x' )
+L_{1,2} (t,t',x,x' )$, where
\begin{eqnarray*}
&&L_{1,1} \bigl(t,t',x,x' \bigr) \\
&&\qquad=
\iint_{ ([0,t_*]\times\R
)\setminus
K_a} \bigl\Vert Y(s,y)\bigr\Vert_p^2
\bigl(G_\nu (t-s,x-y )-G_\nu \bigl(t'-s,x'-y
\bigr) \bigr)^2 \,\ud s\,\ud y,
\\
&&L_{1,2} \bigl(t,t',x,x' \bigr) \\
&&\qquad=
\iint_{ ([0,t_*]\times\R
 )\cap
K_a} \bigl\Vert Y(s,y)\bigr\Vert_p^2
\bigl(G_\nu (t-s,x-y )-G_\nu \bigl(t'-s,x'-y
\bigr) \bigr)^2 \,\ud s\,\ud y.
\end{eqnarray*}
By Proposition~\ref{P2:G-Margin},
%
\begin{eqnarray}
\label{E2_:LDC-1} &&\sup_{ (t',x' )\in B_{t,x}} \bigl(G_\nu (t-s,x-y
)-G_\nu\bigl(t'-s, x'-y\bigr)
\bigr)^2
\nonumber
\\[-8pt]
\\[-8pt]
\nonumber
&&\qquad\le4 G_\nu^2(t+1-s,x-y),
\end{eqnarray}
for all $s\in[0,t']$ and $|y|\ge a$. Moreover,
\begin{eqnarray*}
&&\iint_{ ([0,t_*]\times\R )\setminus K_a} \bigl\Vert Y (s,y )\bigr\Vert_p^2
G_\nu^2(t+1-s,x-y) \,\ud s\,\ud y \\
&&\qquad\le\bigl\Vert Y(\cdot,
\circ)G_\nu(t+1- \cdot,x-\circ )\bigr\Vert _{M,p}^2<+
\infty.
\end{eqnarray*}
Therefore, Lebesgue's dominated convergence theorem implies that
\[
\lim_{ (t',x' )\rightarrow(t,x)} L_{1,1} \bigl(t,t',x,x'
\bigr)=0.
\]

By Proposition~\ref{P2:G}, for some
constant $C>0$ depending only on $\nu$,
\begin{eqnarray*}
&&L_{1,2} \bigl(t,t',x,x' \bigr)\\
&&\qquad \le
A_a\iint_{ ([0,t_*]\times\R )\cap
K_a} \bigl(G_\nu (t-s,x-y
)-G_\nu\bigl(t'-s, x'-y\bigr)
\bigr)^2\,\ud s\,\ud y
\\
&&\qquad\le A_a C \bigl(\bigl\llvert x-x'\bigr\rrvert +
\sqrt{\bigl|t-t'\bigr|} \bigr).
\end{eqnarray*}
Therefore, $\lim_{ (t',x' )\rightarrow
(t,x)}L_1 (t',t,x,x' ) = 0$.

Now let us consider $L_2$.
Decompose $L_2$ into $L_{2,1} (t,t',x,x' )
+L_{2,2} (t,t',x,x' )$, where
\begin{eqnarray*}
L_{2,1} \bigl(t,t',x,x' \bigr) &=&
\iint_{ ([t_*,\hat{t} ]\times\R )\setminus
K_a} \bigl\Vert Y(s,y)\bigr\Vert_p^2G_\nu
(\hat{t}-s, \hat{x}-y )^2 \,\ud s\,\ud y,
\\
L_{2,2} \bigl(t,t',x,x' \bigr) &=&
\iint_{ ([t_*,\hat{t} ]\times
\R
 )\cap
K_a} \bigl\Vert Y(s,y)\bigr\Vert_p^2
G_\nu (\hat{t}-s, \hat{x}-y )^2 \,\ud s\,\ud y.
\end{eqnarray*}
The proof that $\lim_{ (t',x' )\rightarrow(t,x)}
L_{2,1} (t,t',x,x' ) =0$
is the same as for $L_{1,1}$, except that \eqref{E2_:LDC-1} must be
replaced by
\[
\sup_{ (t',x' )\in B_{t,x}} G_\nu^2 (\hat{t}-s,\hat
{x}-y ) \le G_\nu^2(t+1-s,x-y).
\]
The proof for $L_{2,2}$ is similar to $L_{1,2}$: by Proposition~\ref{P2:G},
\[
L_{2,2} \bigl(t,t',x,x' \bigr)\le
A_a \int_{t_*}^{\hat{t}}\,\ud s \int
_\R G_\nu^2 (\hat{t}-s,\hat{x}-y )
\,\ud y \le A_a C \sqrt{\bigl\llvert t'-t\bigr\rrvert }
\rightarrow0,
\]
as $(t',x')\rightarrow(t,x)$.
Therefore, $\lim_{ (t',x' )\rightarrow
(t,x)}L_2 (t',t,x,x' ) = 0$, which completes the proof.
\end{pf*}

We will need deterministic integral inequalities for the
moments of the solution to \eqref{E2:WalshSI}.
Define $b_p = 1$ if $p=2$ and $b_p=2$ if $p>2$.
Recall the formula $\calL_0$ defined in \eqref{E:Def-Ln}
and define the associated functions $\underline{\calL}_0$ and
$\widetilde{\calL}_{0,p}$ using the
convention~\eqref{E:convention}.

\begin{lemma}\label{L2:HigherMom}
Suppose that $f(t,x)$ is a deterministic function and $\rho$ satisfies the
growth condition \eqref{E1:LinGrow}.
If the random fields $w$ and $v$ satisfy, for all $t>0$ and $x\in\R$,
\[
w(t,x) = f(t,x) + \iint_{[0,t]\times\R} G_\nu (t-s,x-y ) \rho\bigl(v
(s,y )\bigr)W(\ud s, \ud y),
\]
where we assume that $G_\nu(t-\cdot,x-\circ)\rho(v(\cdot,\circ))
\in
\calP_2$,
then for all even
integers $p\ge2$,
\begin{eqnarray*}
\bigl\Vert \bigl(G_\nu\star\rho(v)\dot{W} \bigr) (t,x)
\bigr\Vert_{p}^2& \le &z_p^2\bigl\Vert
G_\nu(t-\cdot,x-\circ) \rho\bigl(v(\cdot,\circ)\bigr)\bigr\Vert
_{M,p}^2
\\
&\le& \frac{1}{b_p} \bigl( \bigl(\Vip^2+\Vert v
\Vert_p^2 \bigr)\star \widetilde{\calL}_{0,p}
\bigr) (t,x). 
\end{eqnarray*}
In particular,
\[
\bigl\Vert w(t,x)\bigr\Vert_p^2 \le b_p
f^2(t,x) + \bigl( \bigl(\Vip^2+\Vert v
\Vert_p^2 \bigr)\star\widetilde{\calL }_{0,p}
\bigr) (t,x),
\]
and, assuming \eqref{E1:lingrow},
%
\begin{equation}
\label{E:wLbd} \bigl\Vert w(t,x)\bigr\Vert_2^2 \ge
f^2(t,x) + \bigl( \bigl(\underline{\varsigma}^2+\Vert v
\Vert_p^2 \bigr)\star \underline{\calL}_0
\bigr) (t,x).
\end{equation}
\end{lemma}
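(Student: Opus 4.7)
The plan is to combine three ingredients: the $L^p$-estimate for stochastic convolutions from Lemma \ref{L2:Lp}, the pointwise growth condition \eqref{E1:LinGrow} on $\rho$, and the scalar inequality $(a+b)^2\le b_p(a^2+b^2)$ that recombines the deterministic part $f$ with the stochastic integral. I would first establish the two inequalities in the stated chain, then combine them with $f$ to bound $\Norm{w(t,x)}_p^2$, and finally obtain the lower bound \eqref{E:wLbd} from It\^o's isometry in the reverse direction.

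For the first chain inequality I would apply Lemma \ref{L2:Lp} with $\calG=G_\nu$ and $Z=\rho(v)$; the adaptedness and joint measurability of $Z$ are inherited from those of $v$ via the continuity of $\rho$, and the hypothesis $G_\nu(t-\cdot,x-\circ)\rho(v)\in\calP_2$ ensures the required integrability. For the second inequality, Minkowski's inequality in $L^{p/2}(\Omega)$ (valid since $p/2\ge 1$) combined with \eqref{E1:LinGrow} yields
\begin{gather*}
\Norm{\rho(v(s,y))}_p^2=\Norm{\rho(v(s,y))^2}_{p/2}\le\Lip_\rho^2\Norm{\Vip^2+v(s,y)^2}_{p/2}\le\Lip_\rho^2\bigl(\Vip^2+\Norm{v(s,y)}_p^2\bigr).
\end{gather*}
Substituting into \eqref{E2:Mp-norm} and rewriting $z_p^2\Lip_\rho^2G_\nu^2=\widetilde\calL_{0,p}/a_{p,\Vip}^2$ via $\widetilde\calL_{0,p}(t,x)=a_{p,\Vip}^2z_p^2\Lip_\rho^2G_\nu^2(t,x)$ from \eqref{E:Def-Ln} and \eqref{E:convention}, the second inequality reduces to the elementary case check $a_{p,\Vip}^2\ge b_p$ for each branch in \eqref{E:a_pv}: $a^2=1=b_p$ when $p=2$; $a^2=2=b_p$ when $\Vip=0$ and $p>2$; and $a^2=2^{2(p-1)/p}\ge 2=b_p$ when $\Vip\ne 0$ and $p>2$.

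For the bound on $\Norm{w(t,x)}_p^2$ I would separate cases. When $p=2$, It\^o's isometry (using that $f$ is deterministic and the stochastic integral has zero mean) gives the equality $\Norm{w(t,x)}_2^2=f^2(t,x)+\Norm{(G_\nu\star\rho(v)\dot W)(t,x)}_2^2$, and the chain above with $b_2=1$ yields the claim. When $p>2$, Minkowski in $L^p(\Omega)$ together with $(x+y)^2\le 2(x^2+y^2)$ gives $\Norm{w(t,x)}_p^2\le 2f^2(t,x)+2\Norm{(G_\nu\star\rho(v)\dot W)(t,x)}_p^2$, and the factor $b_p=2$ exactly cancels the $1/b_p$ from the previous step. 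Finally, for the lower bound \eqref{E:wLbd} I would use It\^o's isometry together with \eqref{E1:lingrow}:
\begin{gather*}
\Norm{w(t,x)}_2^2=f^2(t,x)+\iint G_\nu^2(t-s,x-y)\,\E\bigl[\rho(v(s,y))^2\bigr]\,\ud s\,\ud y\ge f^2(t,x)+\bigl((\vip^2+\Norm{v}_2^2)\star\underline{\calL}_0\bigr)(t,x),
\end{gather*}
reading $\Norm{v}_p$ in \eqref{E:wLbd} as $\Norm{v}_2$ (which is what the $L^2$ computation naturally produces).

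The only nontrivial step is the constant bookkeeping in the second chain inequality: the piecewise definition of $a_{p,\Vip}$ in \eqref{E:a_pv} is chosen precisely so that $a_{p,\Vip}^2\ge b_p$ holds in every case, ensuring that the factor $1/b_p$ gained from $(a+b)^2\le b_p(a^2+b^2)$ in the $p>2$ case is exactly absorbed by the $a_{p,\Vip}^2$ hard-wired into $\widetilde\calL_{0,p}$. Every other step is a direct appeal to Lemma \ref{L2:Lp}, It\^o's isometry, or Minkowski's inequality.
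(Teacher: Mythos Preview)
Your proposal is correct and follows essentially the same route as the paper: Lemma~\ref{L2:Lp} for the first inequality, a pointwise bound on $\Norm{\rho(v(s,y))}_p^2$ for the second, the split of $w$ into $f$ plus stochastic integral (exact via It\^o isometry when $p=2$, via $(a+b)^2\le 2(a^2+b^2)$ when $p>2$), and It\^o isometry with \eqref{E1:lingrow} for the lower bound. The one minor technical difference is that for $\Vip\ne 0$ and $p>2$ the paper bounds $\Norm{\rho(v)}_p^2$ using subadditivity of $x\mapsto |x|^{2/p}$, obtaining $\Norm{\rho(v)}_p^2\le \Lip_\rho^2\,2^{(p-2)/p}(\Vip^2+\Norm{v}_p^2)$, which makes the constant bookkeeping come out with \emph{equality} $b_p\cdot 2^{(p-2)/p}=a_{p,\Vip}^2$; your Minkowski argument in $L^{p/2}$ gives the sharper $\Norm{\rho(v)}_p^2\le \Lip_\rho^2(\Vip^2+\Norm{v}_p^2)$ and then uses the (strict) inequality $a_{p,\Vip}^2\ge b_p$, which is equally valid and arguably cleaner. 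Your reading of $\Norm{v}_p$ as $\Norm{v}_2$ in \eqref{E:wLbd} is also the natural one.
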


\begin{pf}
For $p=2$, by the \Ito isometry, \eqref{E1:LinGrow}, and the fact
that $a_{2,\Vip} =1$ and $z_2=1$,
\[
\bigl\Vert w(t,x)\bigr\Vert_2^2 \le f^2(t,x) + \bigl(
\bigl(\Vip^2+\Vert v\Vert_2^2 \bigr)\star
\widetilde{\calL }_{0,2} \bigr) (t,x),
\]
and \eqref{E:wLbd} is obtained similarly.
Now we consider the case $p>2$. Clearly,
\[
\bigl\Vert w(t,x)\bigr\Vert_p^2 \le2\bigl |f(t,x)\bigr|^2 +2
\bigl\Vert \bigl(G_\nu\star \rho(v)\dot{W} \bigr) (t,x)
\bigr\Vert_p^2.
\]
By Lemma~\ref{L2:Lp}, we have that
\[
\bigl\Vert \bigl(G_\nu\star \rho(v)\dot{W} \bigr) (t,x)
\bigr\Vert_p^2 \le z_p^2\bigl\Vert
G_\nu(t-\cdot,x-\circ)\rho\bigl(v(\cdot,\circ)\bigr)
\bigr\Vert_{M,p}^2.
\]
If $\Vip=0$, then $\Vert\rho(v (s,y ))\Vert_p^2
\le\mathit{L}_\rho^2 \Vert v (s,y )\Vert_p^2$.
Otherwise, by \eqref{E1:LinGrow} and subadditivity of the function
$x\mapsto
|x|^{2/p}$,
\[
\bigl\Vert\rho\bigl(v (s,y )\bigr)\bigr\Vert_p^2 \le
\mathit{L}_\rho^2 2^{(p-2)/p} \bigl(\Vip^2
+ \bigl\Vert v (s,y )\bigr\Vert_p^2 \bigr).
\]
Combining these two cases proves that
\begin{eqnarray*}
&&z_p^2 b_p\bigl\Vert G_\nu(t-\cdot,x-
\circ)\rho\bigl(v(\cdot,\circ )\bigr)\bigr\Vert _{M,p}^2
\\
&&\qquad \le z_p^2 \mathit{L}_\rho^2
a_{p,\Vip}^2 \iint_{[0,t]\times\R} G_\nu^2
(t-s,x-y ) \bigl(\Vip^2+\bigl\Vert v (s,y )\bigr\Vert_p^2
\bigr)\,\ud s \,\ud y
\\
& &\qquad= \bigl( \bigl[\Vip^2 +\bigl\Vert v(\cdot,\circ)\bigr\Vert_p^2
\bigr]\star \widetilde{\calL}_{0,p} \bigr) (t,x),
\end{eqnarray*}
because $a_{p,0}^2 = b_p$, and
$a_{p,\Vip}^2= 2^{{(p-2)}/{p}+1} = 2^{2(p-1)/p}$ for $\Vip\ne0$
and $p>2$.
\end{pf}

\subsection{Proof of Theorem \texorpdfstring{\protect\ref{T2:ExUni}}{2.4}}
\label{SS2:Existence}

We begin by stating two lemmas.

\begin{lemma}\label{L2:J0Cont}
The solution $(t,x)\mapsto J_0(t,x)$ to the homogeneous equation~\eqref{E2:Heat-home} with
$\mu\in\calM_H(\R)$ is smooth: $J_0 \in
C^{\infty} (\R_+^*\times\R )$.
If, in addition, $\mu(\ud x) = f(x)\,\ud x$, where $f$ is continuous,
then $J_0
\in C^{\infty} (\R_+^*\times\R ) \cap C (\R
_+\times\R
 )$,
and if $f$ is $\alpha$-H\"older continuous, then
$J_0 \in C^{\infty} (\R_+^*\times\R ) \cap
C_{\alpha/2,\alpha} (\R_+\times\R )$.
\end{lemma}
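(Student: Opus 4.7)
The plan is to handle the three conclusions in turn, each via a Lebesgue dominated convergence argument driven by the Gaussian decay of $G_\nu$ paired with the integrability condition \eqref{E1:J0finite}. For the interior smoothness, I would justify differentiating under the integral sign in $J_0(t,x)=\int_\R G_\nu(t,x-y)\mu(\ud y)$. An induction shows that for every multi-index $(j,k)$, $\partial_t^j\partial_x^k G_\nu(t,x-y) = G_\nu(t,x-y)\,P_{j,k}(x-y,\,t^{-1})$ for a polynomial $P_{j,k}$. On a compact rectangle $[t_1,t_2]\times[-R,R]\subset\R_+^*\times\R$ with $t_1>0$, the factor $t^{-1}$ is bounded and $G_\nu(t,x-y)\le C\,e^{-(x-y)^2/(2\nu t_2)}$; expanding $(x-y)^2\ge y^2/2-R^2$ and absorbing the polynomial factor into a slightly weaker Gaussian in $y$ yields $|\partial_t^j\partial_x^k G_\nu(t,x-y)|\le C_{j,k}\,e^{-a y^2}$ for some $a>0$. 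Since \eqref{E1:J0finite} makes this bound $|\mu|$-integrable, dominated convergence licenses differentiation of all orders, giving $J_0\in C^\infty(\R_+^*\times\R)$.

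For continuity at $t=0^+$, assume $\mu(\ud y)=f(y)\ud y$ with $f$ continuous and fix $x_0\in\R$. Using $\int_\R G_\nu(t,x-y)\ud y=1$, I would write $J_0(t,x)-f(x_0)=\int_\R G_\nu(t,x-y)[f(y)-f(x_0)]\ud y$, pick $\delta>0$ with $|f(y)-f(x_0)|<\epsilon$ on $|y-x_0|<\delta$, and restrict to $t\le 1$, $|x-x_0|<\delta/2$ so that $|x-y|\ge\delta/2$ on the tail. The near-diagonal contribution is $\le\epsilon$, and for the tail I split $(x-y)^2/(2\nu t)$ in half, bounding one half by $\delta^2/(16\nu t)$ and the other (using $t\le 1$) by $(x-y)^2/(4\nu)$, which gives
\[
G_\nu(t,x-y)\;\le\;\frac{e^{-\delta^2/(16\nu t)}}{\sqrt{2\pi\nu t}}\,e^{-(x-y)^2/(4\nu)}.
\]
The prefactor vanishes as $t\to 0^+$, and the remaining integral against $|f|$ is locally bounded in $x$ by \eqref{E1:J0finite}, so the tail contribution vanishes as well.

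For the parabolic H\"older regularity, with $f$ (locally) $\alpha$-H\"older I would bound the temporal modulus by $|J_0(t,x)-f(x)|\le\int_\R G_\nu(t,z)\,|f(x-z)-f(x)|\ud z$, splitting at $|z|\le M$ versus $|z|>M$: on the core the H\"older constant of $f$ on the relevant compact gives a contribution $\le L\int G_\nu(t,z)|z|^\alpha\ud z = C_\alpha L(\nu t)^{\alpha/2}$, and the tail is controlled exactly as in the preceding paragraph. The spatial modulus $|J_0(t,x)-J_0(t,x')|\le\int_\R G_\nu(t,z)|f(x-z)-f(x'-z)|\ud z$ is handled the same way and bounded by a multiple of $|x-x'|^\alpha$. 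Combined with interior smoothness this yields $J_0\in C_{\alpha/2,\alpha}(\R_+\times\R)$.

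The main technical point throughout is controlling the tail as $t\to 0^+$: the $t^{-1/2}$ prefactor in $G_\nu$ must be neutralized by peeling off half of the quadratic exponent to extract a factor $e^{-c/t}/\sqrt{t}\to 0$, leaving the other half to pair with $|\mu|$-integrable Gaussian weights via \eqref{E1:J0finite}. Once this split is in place, each step reduces to a routine application of dominated convergence.
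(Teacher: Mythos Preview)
Your treatment of the $C^\infty$ interior regularity and the continuity at $t=0^+$ is correct and is essentially what the paper has in mind (the paper simply cites standard references for these parts). For the H\"older statement, however, your argument and the paper's differ, and your version has a small gap.

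The paper's proof of the H\"older estimate uses the Brownian scaling substitution $y=x-\sqrt{t}\,z$, which rewrites
\[
J_0(t,x)=\int_\R G_\nu(1,z)\,f\!\left(x-\sqrt{t}\,z\right)\ud z,
\]
so that the Gaussian weight no longer depends on $t$. Then for any $t,t'\ge 0$,
\[
\left|J_0(t,x)-J_0(t',x)\right|\le C\left|\sqrt{t}-\sqrt{t'}\right|^{\alpha}\int_\R G_\nu(1,z)\,|z|^\alpha\,\ud z\le C'\,|t-t'|^{\alpha/2},
\]
using $|\sqrt{t}-\sqrt{t'}|\le |t-t'|^{1/2}$. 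This gives the full temporal H\"older modulus in one line; spatial increments are handled the same way.

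Your approach instead bounds $|J_0(t,x)-f(x)|\le C\,t^{\alpha/2}$, which is only the special case $t'=0$. As written, this does not yield $|J_0(t,x)-J_0(t',x)|\le C|t-t'|^{\alpha/2}$ for general $0<t<t'$: the triangle inequality through $f(x)$ gives $t^{\alpha/2}+(t')^{\alpha/2}$, not $(t'-t)^{\alpha/2}$, and ``interior smoothness'' alone does not furnish a H\"older constant that stays bounded as $t,t'\to 0^+$. The fix is easy: either adopt the paper's substitution, or use the semigroup property $J_0(t',\cdot)=G_\nu(t'-t,\cdot)*J_0(t,\cdot)$ together with your spatial estimate (which shows $J_0(t,\cdot)$ is $\alpha$-H\"older with the same constant as $f$, uniformly in $t$) to run your $t'=0$ argument with $f$ replaced by $J_0(t,\cdot)$. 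Either route closes the gap; the scaling trick is just cleaner because it avoids all the core/tail splitting.
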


\begin{pf}
The property $J_0\in C^{\infty} (\R_+^*\times\R )$ is a
slight extension of standard results (see \cite{John91PDE}, (1.14) on
page~210). For more details, we refer the interested reader to
\cite{LeChen13Thesis}, Section~2.6.
We only show here that $J_0 \in
C_{\alpha/2,\alpha} (\R_+\times\R )$ if $\mu(\ud
x)=f(x)\,\ud x$
and $f$
is $\alpha$-H\"older continuous.
Fix $(t,x)$ and $(t',x')\in\R_+\times\R$ with $t'> t$.
By changing variables appropriately, we see that
\[
J_0(t,x)-J_0\bigl(t',x'
\bigr) = \int_\R G_\nu(1,z) \bigl(f (x-\sqrt{t}
z )-f \bigl(x-\sqrt{t'} z \bigr) \bigr)\,\ud z.
\]
By the H\"older continuity of $f$, for some constants $C$ and $C'$,
\[
\bigl\llvert J_0(t,x)-J_0\bigl(t',x
\bigr)\bigr\rrvert \le C \bigl\llvert \sqrt{t}-\sqrt{t'}\bigr\rrvert
^\alpha\int_\R G_\nu(1,z) |z|
^\alpha\,\ud z\le C' \bigl\llvert t'-t\bigr
\rrvert ^{\alpha/2}.
\]
Spatial increments are treated similarly.
\end{pf}

If the initial data is such that $J_0^2(t,x)$ is a constant $v^2$, that is,
$\mu(\ud x)=v\,\ud x$,
then $ (J_0^2\star\calK )(t,x)= (v^2\star\calK
)(t,x) =v^2
\calH(t)$. Clearly,
%
\begin{equation}\quad
\label{E2:InitK0} \bigl(v^2\star\calL_0 \bigr) (t,x) =
v^2 \lambda^2 \int_0^t
\,\ud s \frac{1}{\sqrt{4\pi\nu s}} \int_\R \,\ud y\, G_{{\nu}/{2}}
(s,y ) = v^2 \lambda^2 \sqrt{\frac{t}{\pi\nu}}.
\end{equation}
For general $J_0^2(t,x)$, we have the following.

\begin{lemma}\label{L2:InDt}
Fix $\mu\in\calM_H(\R)$.
Suppose $K(t,x)= G_{\nu/2}(t,x) h(t)$ for some nonnegative function
$h(t)$.
Then for all $(t,x)\in\R_+^*\times\R$,
%
\begin{equation}
\label{E2:InDt-A} \bigl(J_0^2\star K \bigr) (t,x) \le 2
\sqrt{t} \bigl\llvert J_0^*(2t,x)\bigr\rrvert ^2 \int
_0^t \frac{h(t-s)}{\sqrt
{s}}\,\ud s , 
\end{equation}
where $J_0^{*}(t,x)=
 (G_{\nu}(t,\cdot) * |\mu| ) (x)$.
In particular, for all $(t,x)\in\R_+^*\times\R$,
%
\begin{eqnarray}
\label{E2:InDt} \bigl(J_0^2\star\calK \bigr) (t,x) &\le&
\lambda^2 \sqrt{\pi t/\nu} \bigl\llvert J_0^*(2t,x)\bigr
\rrvert ^2 \biggl(1+ 2 \exp \biggl(\frac{\lambda^4 t}{4\nu} \biggr)
\biggr)<+\infty,
\\
\label{E2:InDt-L0} \bigl(J_0^2\star\calL_0
\bigr) (t,x) &\le& \lambda^2 \sqrt{\pi t/\nu} \bigl\llvert
J_0^*(2t,x)\bigr\rrvert ^2 <+\infty.
\end{eqnarray}
\end{lemma}
\begin{pf}
Assume that $\mu\ge0$. Write $J_0^2 (s,y )$ as a double integral:
%
\begin{eqnarray}
\label{E2_:upbd} \bigl(J_0^2 \star K \bigr) (t,x) &=& \int
_0^t \,\ud s\int_\R\,\ud y
\iint_{\R^2} G_{\nu}(s,y-z_1)G_{\nu}(s,y-z_2)
\nonumber
\\[-8pt]
\\[-8pt]
\nonumber
&&{}\times G_{{\nu}/{2}} (t-s,x-y ) h(t-s) \mu(\ud z_1)\mu(\ud
z_2).
\nonumber
\end{eqnarray}
Then apply Lemma~\ref{L2:GG} to $G_{\nu}(s,y-z_1)G_{\nu}(s,y-z_2)$ and
integrate over $y$ using the semigroup property of the heat kernel and setting
$\bar{z}=(z_1+z_2)/2$:
%
\begin{eqnarray}
\label{E2:InDt-Mid} &&\bigl(J_0^2\star K \bigr) (t,x)
\nonumber
\\[-8pt]
\\[-8pt]
\nonumber
&&\qquad =\int
_0^t \,\ud s \iint_{\R^2}
G_{2\nu}(s,z_2-z_1) G_{{\nu}/{2}} (t,x-
\bar{z} )h(t-s) \mu(\ud z_1) \mu (\ud z_2).
\end{eqnarray}
Applying Lemma~\ref{L2:Split} and then integrating over $z_1$ and
$z_2$ proves
\eqref{E2:InDt-A}.
For a signed measure $\mu$, simply replace $\mu$ by $|\mu|$.
The inequality \eqref{E2:InDt-L0} is proved by choosing $h(t) =
\lambda^2(4\pi\nu t)^{-1/2}$.
Finally, \eqref{E2:InDt} follows from \eqref{E2:InDt-A} by taking
$h(t)=\frac{1}{\sqrt{4\pi\nu t}}+
\frac{\lambda^2}{2\nu} e^{{\lambda^4 t}/{(4\nu)}}$ and then
using the change of variable $s=u^2/a$ to see that
%
\begin{equation}
\label{E2:ExpDivSqrt} \int_0^t \frac{e^{a (t-s)}}{\sqrt{s}}
\,\ud s =\sqrt{\pi/a} e^{a t} \Erf (\sqrt{a t} )\le\sqrt{\pi/a}
e^{a t},\qquad a>0.
\end{equation}
This completes the proof.
\end{pf}

Comparing the proofs of \eqref{E2:InDt} and \eqref{E2:InDt-L0},
we can see that $ (J_0^2 \star\calK ) (t,x)<\infty
$ if and only if $ (J_0^2 \star\calL_0  ) (t,x)<\infty$:
the main issue is the integrability around $t=0$ caused by
the factor $\frac{1}{\sqrt{t}}$ in $\calL_0$.

\begin{pf*}{Proof of Theorem~\ref{T2:ExUni}}
Fix an even integer $p\ge2$.

\textit{Step} 1.
Define $u_0(t,x) = J_0(t,x)$. By Lemma~\ref{L2:J0Cont}, $u_0(t,x)$ is
a well defined and continuous function over $(t,x) \in\R_+^*\times\R$.
We shall now apply Proposition~\ref{P2:Picard} with $Y=\rho(u_0)$.
We check the three properties that it
requires. Properties (i) and (ii) are trivially satisfied since $Y$ is
deterministic and continuous over $\R_+^*\times\R$. Property~(iii)
is also
true since, by
Lemma~\ref{L2:HigherMom},
%
\begin{equation}\quad
\label{E2_:step1} b_p z_p^2 \bigl\Vert\rho
\bigl(u_0(\cdot,\circ) \bigr) G_\nu(t-\cdot ,x-\circ)
\bigr\Vert_{M,p}^2 \le \bigl( \bigl[\Vip^2+J_0^2
\bigr] \star \widetilde{\calL}_{0,p} \bigr) (t,x) ,
\end{equation}
which is finite by \eqref{E2:InitK0} and Lemma~\ref{L2:InDt}.
Hence, the following Walsh integral is well defined and
is an adapted random field
\[
I_1(t,x) = \iint_{[0,t]\times\R} \rho \bigl(u_0 (s,y )
\bigr) G_\nu (t-s,x-y ) W (\ud s,\ud y ).
\]
The continuity of the deterministic function $ (s,y )\mapsto
\rho(u_0 (s,y ))$ implies its local $L^p(\Omega)$-boundedness
[in the
sense of \eqref{E2:Ytxlb}]. So $(t,x)\mapsto I_1(t,x)$ is
$L^p(\Omega)$-continuous on $\R_+^*\times\R$ by Proposition~\ref
{P2:Picard}.

Define $u_1(t,x)=
J_0(t,x)+I_1(t,x)$.
Since $J_0(t,x)$ is continuous on $\R_+^*\times\R$,
$u_1(t,x)$ is $L^p(\Omega)$-continuous on $\R_+^*\times\R$.
Now we estimate its moments. By \Itos isometry,
\[
\bigl\Vert I_1(t,x)\bigr\Vert_2^2= \bigl\Vert\rho
\bigl(u_0(\cdot,\circ) \bigr) G_\nu(t-\cdot,x-\circ )
\bigr\Vert_{M,2}^2,
\]
which equals $ ( [\vv^2+J_0^2 ] \star
\calL_0 )(t,x)$ for the quasi-linear case \eqref{E1:qlinear},
and is
bounded from above [see \eqref{E2_:step1} with $b_2 z_2^2 =1$] and
below [if
$\rho$ additionally satisfies \eqref{E1:lingrow}], in which case
\[
\bigl( \bigl[\Vip^2+J_0^2 \bigr] \star
\underline{\calL}_0 \bigr) (t,x) \le \bigl\Vert I_1(t,x)
\bigr\Vert_2^2 \le \bigl( \bigl[\Vip^2+J_0^2
\bigr] \star\overline{\calL}_0 \bigr) (t,x).
\]
Since $J_0(t,x)$ is deterministic and since $\E [I_1(t,x) ]=0$,
$\Vert u_1(t,x)\Vert_2^2 = J_0^2(t,x)
+\Vert I_1(t,x)\Vert_2^2$, and by Lemma~\ref{L2:HigherMom},
\begin{eqnarray*}
\bigl\Vert u_1(t,x)\bigr\Vert_p^2 & \le&
b_p J_0^2(t,x)+ \bigl( \bigl(
\Vip^2+ J_0^2 \bigr)\star\widetilde{\calL
}_{0,p} \bigr) (t,x)
\\
&\le & b_p J_0^2(t,x)+ \bigl( \bigl(
\Vip^2+b_p J_0^2 \bigr)\star
\widetilde{\calK}_p \bigr) (t,x),
\end{eqnarray*}
since $b_p\ge1$ and $\widetilde{\calL}_{0,p}\le\widetilde{\calK
}_p$ by
\eqref{E:Def-K}.

In summary, $u_1$ is a well-defined random field that satisfies (with $k=1$)
the four properties (1)--(4) described just below in step 2.\vadjust{\goodbreak}

\textit{Step} 2.
Assume by induction that for all $k\le n$ and $(t,x)\in\R_+^*\times
\R$, the
Walsh integral
\[
I_k(t,x) = \iint_{[0,t]\times\R} \rho \bigl(u_{k-1} (s,y
) \bigr) G_\nu (t-s,x-y ) W (\ud s,\ud y )
\]
is well defined such that:
\begin{longlist}[(1)]
\item[(1)]$u_k:=J_0+I_k$ is adapted to the filtration $\{\calF_t\}_{t>0}$.
\item[(2)] The function $(t,x)\mapsto u_k(t,x)$ from $\R_+^*\times\R$ into
$L^p(\Omega)$ is continuous.
\item[(3)]$\E [u_k^2(t,x) ] = J_0^2(t,x)+\sum_{i=0}^{k-1}
 ( [\vv^2+J_0^2 ] \star\calL_i  )(t,x)$ for the
quasi-linear case and it is bounded from above and below [if
$\rho$ satisfies \eqref{E1:lingrow}] by
\begin{eqnarray*}
J_0^2(t,x)+\sum_{i=0}^{k-1}
\bigl( \bigl[\underline{\varsigma }^2+J_0^2
\bigr]\star \underline{\calL}_i \bigr) (t,x) &\le& \E
\bigl[u_k^2(t,x) \bigr]\\
 &\le& J_0^2(t,x)+
\sum_{i=0}^{k-1} \bigl( \bigl[
\Vip^2+J_0^2 \bigr] \star \overline{
\calL}_i \bigr) (t,x).
\end{eqnarray*}
\item[(4)]$\Vert u_k(t,x)\Vert_p^2 \le b_p J_0^2(t,x)+
 ( (\Vip^2+b_p J_0^2 )\star
\widetilde{\calK}_p )(t , x)$.
\end{longlist}

We are now going to define $u_{n+1}(t,x)$. We shall apply Proposition~\ref{P2:Picard} again, with
$Y(s,y)=\rho (u_n (s,y ) )$, by
verifying
the three properties that it requires. Properties (i) and (ii) are clearly
satisfied by the induction assumptions (1)
and (2). By Lemma~\ref{L2:HigherMom} and the induction assumptions, we establish
property (iii):
%
\begin{eqnarray}
\label{E2_:step2Mp}
&&b_p z_p^2 \bigl\Vert\rho
\bigl(u_n(\cdot,\circ) \bigr) G_\nu(t-\cdot ,x-\circ)
\bigr\Vert_{M,p}^2\nonumber\\
&&\qquad\le \bigl( \bigl(\Vip^2+\Vert
u_n\Vert_p^2 \bigr)\star \widetilde{
\calL}_{0,p} \bigr) (t,x)
\nonumber
\\[-8pt]
\\[-8pt]
\nonumber
&&\qquad\le \bigl( \bigl[\Vip^2+b_p
J_0^2 + \bigl(\Vip^2+b_p
J_0^2 \bigr)\star \widetilde{\calK}_p
\bigr]\star\widetilde{\calL}_{0,p} \bigr) (t,x)
\\
&&\qquad= \bigl( \bigl[\Vip^2+b_p J_0^2
\bigr]\star\widetilde{\calK}_p \bigr) (t,x),\nonumber
\end{eqnarray}
by \eqref{E:KS-Kn-K0}, and this is finite by Lemma~\ref{L2:InDt}.

Hence, for all $(t,x)\in\R_+^*\times\R$,
$\rho (u_n(\cdot,\circ)  ) G_\nu(t-\cdot,x-\circ) \in
\calP_{p}$
and the Walsh integral
\[
I_{n+1}(t,x) = \iint_{[0,t]\times\R} \rho \bigl(u_n (s,y )
\bigr) G_\nu (t-s,x-y ) W (\ud s,\ud y )
\]
is a well defined and adapted random field. By assumption (2),
$ (s,y )\mapsto
\rho(u_n (s,y ))$ is $L^p(\Omega)$-continuous, so Proposition~\ref{P2:Picard} implies that $(t,x)\mapsto I_{n+1}(t,x)$ is also
$L^p(\Omega)$-continuous. Define
\[
u_{n+1}(t,x)= J_0(t,x) + I_{n+1}(t,x).\vadjust{\goodbreak}
\]

Now we estimate the moments of $u_{n+1}(t,x)$. By Lemma~\ref
{L2:HigherMom} and
\eqref{E2_:step2Mp},
\[
\bigl\Vert u_{n+1}(t,x)\bigr\Vert_p^2 \le b_p
J_0^2(t,x)+ \bigl( \bigl(\Vip^2+b_p
J_0^2 \bigr)\star \widetilde{\calK}_p
\bigr) (t,x).
\]
As for the second moment, by Lemma~\ref{L2:HigherMom},
\begin{eqnarray*}
J_0^2(t,x)+ \bigl( \bigl[\underline{
\varsigma}^2+\Vert u_n\Vert _2^2
\bigr] \star \underline{\calL}_0 \bigr) (t,x) &\le& \E
\bigl[u_{n+1}^2(t,x)\bigr] \\
&\le& J_0^2(t,x)+
\bigl( \bigl[\Vip^2+\Vert u_n\Vert_2^2
\bigr] \star \overline{\calL}_0 \bigr) (t,x).
\end{eqnarray*}
Substituting the bounds from induction assumption (3) gives
\begin{eqnarray*}
J_0^2(t,x)+\sum_{i=0}^n
\bigl( \bigl[\underline{\varsigma }^2+J_0^2
\bigr] \star \underline{\calL}_i \bigr) (t,x) &\le& \E
\bigl[u_{n+1}^2(t,x)\bigr]\\
& \le& J_0^2(t,x)+
\sum_{i=0}^n \bigl( \bigl[
\Vip^2+J_0^2 \bigr] \star \overline{
\calL}_i \bigr) (t,x).
\end{eqnarray*}
In the quasi-linear case, the inequalities become the equality
\[
\E\bigl[u_{n+1}^2(t,x)\bigr] = J_0^2(t,x)+
\sum_{i=0}^n \bigl( \bigl[
\Vip^2+J_0^2 \bigr] \star
\calL_i \bigr) (t,x).
\]
Therefore, the four properties (1)--(4) also hold for $k=n+1$.

\textit{Step} 3. We claim that
for all $(t,x)\in\R_+^*\times\R$, the sequence
$ \{u_n(t,x) \}_{n\in\bbN}$ is a
Cauchy sequence in $L^p(\Omega)$, and we will use $u(t,x)$ to denote
its limit.
To prove this claim, define $F_n(t,x)=\Vert u_{n+1}(t,x)-u_n (t,x)\Vert_p^2$.
For $n\ge1$, by Lemma~\ref{L2:Lp} and the Lipschitz continuity of
$\rho$,
\begin{eqnarray}
F_n(t,x) \le (F_{n-1}\star\widecheck{\calL}_{0,p}
) (t,x)\nonumber \\
\eqntext{\mbox{with $ \widecheck{\calL}_{0,p}(t,x) :=
\calL_0 \bigl(t,x;\nu, z_p \max (\LIP_\rho,
a_{p,\Vip}\mathit{L}_\rho ) \bigr)$.}}
\end{eqnarray}
By analogy with the convention \eqref{E:convention},
the functions $\widecheck{\calL}_{n,p}(t,x)$ and
$\widecheck{\calK}(t,x)$ are defined by the same parameters as
$\widecheck{\calL}_{0,p}(t,x)$.
For the case $n=0$, we need to use the linear growth
condition \eqref{E1:LinGrow} instead: By Lemma~\ref{L2:HigherMom},
\[
F_0(t,x) \le \bigl( \bigl[ \Vip^2+J_0^2
\bigr] \star\widetilde{\calL}_{0,p} \bigr) (t,x) \le \bigl( \bigl[
\Vip^2+J_0^2 \bigr] \star\widecheck{
\calL}_{0,p} \bigr) (t,x).
\]
Then apply the above relation recursively:
\begin{eqnarray*}
F_n(t,x) &\le& (F_{n-1}\star\widecheck{\calL}_{0,p}
) (t,x) \le\cdots\le \bigl( \bigl[ \Vip^2+J_0^2
\bigr] \star\widecheck{\calL }_{n,p} \bigr) (t,x)\\
&\le& \bigl( \bigl[
\Vip^2+J_0^2 \bigr] \star \widecheck{
\calL}_{0,p} \bigr) (t,x) B_n(t),
\end{eqnarray*}
by \eqref{E:KS-LB}.
Now by Proposition~\ref{P2:K}, for all $(t,x)\in\R_+^*\times\R$
fixed and
all
$m\in\bbN^*$,
\[
\sum_{i=0}^\infty\bigl\llvert
F_i(t,x)\bigr\rrvert ^{1/m} \le \bigl\llvert \bigl( \bigl[
\Vip^2+J_0^2 \bigr] \star\widecheck{
\calL}_{0,p} \bigr) (t,x)\bigr\rrvert ^{1/m} \sum
_{i=0}^\infty \bigl\llvert B_i(t)\bigr
\rrvert ^{1/m}<+\infty,
\]
which proves that $\{u_n(t,x)\}_{n\in\bbN}$ is a Cauchy sequence in
$L^p(\Omega)$ by taking $m=2$.

The moments estimates \eqref{E2:SecMom-Up}, \eqref{E2:SecMom-Lower} and
\eqref{E2:SecMom} can be obtained simply by letting $n\rightarrow
+\infty
$ in the
conclusions (3) and (4) of the previous step and using~\eqref{E:Def-K} and
\eqref{E2:H}.
Now let us prove the $L^p(\Omega)$-continuity.
For all $a>0$, set $K_a:=[1/a,a]\times[-a,a]$.
Since $B_n(t)$ is nondecreasing, the above $L^p(\Omega)$ limit is
uniform over
$K_a$ because
\[
\sum_{i=0}^\infty\sup_{(t,x)\in K_a}
\bigl\llvert F_i(t,x)\bigr\rrvert ^{1/m} \le \Biggl(\sum
_{i=0}^\infty \bigl\llvert
B_i(a)\bigr\rrvert ^{1/m} \Biggr) \sup
_{(t,x)\in K_a} \bigl\llvert \bigl( \bigl[ \Vip^2+J_0^2
\bigr] \star\widecheck{\calL}_{0,p} \bigr) (t,x)\bigr\rrvert
^{1/m}.
\]
By \eqref{E2:InitK0}, \eqref{E2:InDt-L0} and the continuity of
$(t,x)\mapsto J_0^*(2t,x)$ over $\R_+^*\times\R$  (see Lemma~\ref
{L2:J0Cont}),
we see that the right-hand side is finite.
Hence,\break  $\sum_{i=0}^\infty\sup_{(t,x)\in K_a}
\llvert F_i(t,x)\rrvert ^{1/m}<+\infty$, which implies that the function
$(t,x)\mapsto u(t,x)$ from $\R_+^*\times\R$ into $L^p(\Omega)$ is continuous
over $K_a$ since each $u_n(t,x)$ is so.
As $a$ can be arbitrarily large, we have then proved the
$L^p(\Omega)$-continuity of $(t,x)\mapsto u(t,x)$ over $\R_+^*\times
\R$.

The following inequality, which will be used in step 4, is a direct consequence
of the upper bound (4) of step 2 and \eqref{E:KS-Kn-K0}:
%
\begin{equation}
\label{E2_:u-K0} \bigl( \bigl[\Vip^2+\Vert u\Vert_p^2
\bigr]\star\widetilde{\calL }_{0,p} \bigr) (t,x) \le \bigl( \bigl[
\Vip^2+b_p J_0^2 \bigr]\star
\widetilde{\calK }_p \bigr) (t,x).
\end{equation}

\textit{Step \textup{4} \textup{(}Verifications\textup{)}}.
Now we shall verify that
$\{u(t,x), (t,x)\in\R_+^*\times\R\}$
defined in the previous step is indeed a solution to the stochastic integral
equation~\eqref{E2:WalshSI} in the sense of Definition~\ref{D2:Solution}.
Clearly, $u$ is adapted and jointly-measurable, and hence it satisfies
(1) and
(2) of Definition~\ref{D2:Solution}.
The continuity of the function $(t,x)\mapsto u(t,x)$ from $\R
_+^*\times
\R$ into
$L^2 (\R )$ proved
in step 3, Proposition~\ref{P2:Picard} applied to $Y=\rho(u_n)$ and
\eqref{E2_:u-K0} imply (3) of
Definition~\ref{D2:Solution}.
So we only need to verify that $u$ satisfies
(4) of Definition~\ref{D2:Solution}, that is, $u(t,x)$ satisfies
\eqref{E2:WalshSI} a.s., for all $(t,x)\in\R_+^*\times\R$.

We shall apply Proposition~\ref{P2:Picard} with
$Y(s,y)=\rho(u (s,y ))$ by
verifying the three properties that it requires.
Properties (i) and (ii) are satisfied by (1) and (2) in the conclusion part
of step 3. Property (iii) is also true since, by Lemma~\ref{L2:HigherMom}
and also~\eqref{E2_:u-K0},
\begin{eqnarray*}
b_p z_p^2 \bigl\Vert\rho \bigl(u (\cdot,\circ)
\bigr) G_\nu(t-\cdot ,x-\circ)\bigr\Vert_{M,p}^2 &\le&
\bigl( \bigl(\Vip^2+\Vert u\Vert_p^2 \bigr)
\star\widetilde{\calL }_{0,p} \bigr) (t, x)\\
&\le& \bigl( \bigl[
\Vip^2+b_p J_0^2 \bigr]\star
\widetilde{\calK}_p \bigr) (t,x) ,
\end{eqnarray*}
which is finite by Lemma~\ref{L2:InDt}. Hence,
\[
\rho \bigl(u(\cdot,\circ) \bigr) G_\nu(t-\cdot,x-\circ) \in
\calP_{p}  \qquad\mbox{for all $(t,x)\in\R_+^*\times\R$} ,
\]
and the following Walsh integral is well defined and is an adapted
random field
\[
I(t,x):=\iint_{[0,t]\times\R} \rho \bigl(u (s,y ) \bigr)
G_\nu (t-s,x-y ) W(\ud s,\ud y).
\]
Furthermore, by the last part of Proposition~\ref{P2:Picard},
$(t,x)\mapsto I(t,x)$ is $L^p(\Omega)$-continuous, since by conclusion
(2) of
step 3, $(t,x)\mapsto u(t,x)$ is $L^p(\Omega)$-\break continuous.

By step 3,
\[
u_n(t,x) = J_0(t,x) + \iint_{[0,t]\times\R}
G_\nu (t-s,x-y ) \rho \bigl(u_{n-1} (s,y ) \bigr) W (\ud s,
\ud y )
\]
with $u_n(t,x)$ converging to $u(t,x)$ in $L^p(\Omega)$. We only need
to show
that
the right-hand side converges in $L^p(\Omega)$ to $J_0(t,x) +I(t,x)$.
In fact, by Lemma~\ref{L2:Lp},
\begin{eqnarray*}
&&\biggl\Vert\iint_{[0,t]\times\R} G_\nu (t-s,x-y ) \bigl[\rho
\bigl(u (s,y ) \bigr)-\rho \bigl(u_n (s, y ) \bigr) \bigr] W (\ud s,
\ud y )\biggr\Vert_p^2
\\
&&\qquad\le z_p^2 \LIP_\rho^2
\iint_{[0,t]\times\R} G_\nu^2 (t-s,x-y ) \bigl\Vert u (s,y
)-u_n (s,y )\bigr\Vert_p^2 \,\ud s\,\ud y.
\end{eqnarray*}
Now apply Lebesgue's dominated convergence theorem to conclude that
the above integral tends to zero as $n\rightarrow\infty$ because
(i) for all $ (s,y )\in\,]0,t]\times\R$,
$\Vert u_n (s,y )-u (s,y )\Vert_p^2\rightarrow
0$ as
$n\rightarrow
+\infty$;
(ii) by step 2,
\[
\bigl\Vert u_n(s, y)\bigr\Vert_p^2 \le b_p
J_0^2(s, y)+ \bigl( \bigl[\Vip^2+b_p
J_0^2 \bigr]\star \widetilde{\calK}_p
\bigr) (s, y),
\]
and by step 3, the same upper bound applies to $\Vert u(s,y)\Vert_p^2$. Finally,
by Lemma~\ref{L2:InDt} and \eqref{E:KS-Kn-K0}, the above upper bound,
multiplied
by
$G_\nu^2(t-s,x-y)$, is integrable over $[0,t]\times\R$.
This finishes the proof of the existence
part of Theorem~\ref{T2:ExUni} with the moment estimates.

\textit{Step \textup{5} \textup{(}Uniqueness\textup{)}.}
Let $u$ and $v$ be two solutions to \eqref{E2:WalshSI} (in the sense of
Definition~\ref{D2:Solution}) with the same
initial data, and denote $w(t,x):= u(t,x)-v(t,x)$.
The $L^2(\Omega)$-continuity---property (3) of Definition~\ref{D2:Solution}---guarantees that both $(t,x)\mapsto u(t,x)$ and
$(t,x)\mapsto v(t,x)$ are
$L^2(\Omega)$-continuous since $(t,x)\mapsto J_0(t,x)$ is continuous
by Lemma~\ref{L2:J0Cont}.
Then $w(t,x)$ is well defined and the
function $(t,x)\mapsto w(t,x)$ is $L^2(\Omega)$-continuous.
Writing $w(t,x)$ explicitly
and then taking the second moment, by \Itos isometry and the Lipschitz
condition on $\rho$, we have
%
\begin{eqnarray}
\label{E2:Uniqueness} \E\bigl[w(t,x)^2\bigr] \le \bigl(\E
\bigl[w^2\bigr]\star \calL_0^* \bigr) (t,x)
\nonumber
\\[-8pt]
\\[-8pt]
\eqntext{\mbox{where
$\calL_0^*(t,x) := \calL_0 (t,x;\nu,
\LIP_\rho )$.}}
\end{eqnarray}
Now we convolve both sides with respect to
$\calK^*(t,x):=\calK(t,x;\nu,\LIP_\rho)$ and use~\eqref{E:KS-Kn-K0} to obtain
\begin{eqnarray*}
\bigl(\E\bigl[w^2\bigr]\star\calK^*\bigr) (t,x) &\le& \bigl(\E
\bigl[w^2\bigr]\star\calL_0^*\star\calK^*\bigr) (t,x)\\
& =&
\bigl(\E\bigl[w^2\bigr]\star\calK^*\bigr) (t,x) - \bigl(\E
\bigl[w^2\bigr]\star \calL_0^*\bigr) (t,x).\vadjust{\goodbreak}
\end{eqnarray*}
So $(\E[w^2]\star\calL_0^*)(t,x) \equiv0$,
which implies by \eqref{E2:Uniqueness} that $\E[w(t,x)^2]=0$ for all
$(t,x)\in\R_+^*\times\R$. Therefore, we conclude
that for all $(t,x)\in\R_+^*\times\R$,
$u(t,x)=v(t,x)$ a.s.

\textit{Step \textup{6} \textup{(}Two-point correlations\textup{)}}.
In this last step, we prove the properties \eqref{E2:TP-Up},
\eqref{E2:TP-Lower} and \eqref{E2:TP} of the two-point correlation
function.
Let $u(t,x)$ be the
solution to \eqref{E2:WalshSI}. Fix $\tau\ge t\in\R_+^*$ and
$x,y\in\R$.
Consider the
$L^2(\Omega)$-martingale $\{U(s;t,x), s\in[0,t]\}$ defined by
\[
U(s;t,x):= J_0(t,x) + \int_0^s
\int_\R G_\nu(t-r,x-z)\rho\bigl(u(r,z)\bigr)W
(\ud r,\ud z ).
\]
Then $U(t;t,x)=u(t,x)$ and $\E [U(s;t,x) ]=J_0(t,x)$.
Similarly, we
define the
martingale
$\{U(s;\tau,y), s\in[0,\tau]\}$. The mutual variation process of
these two
martingales is, for all $s\in[0,t]$,
\[
\bigl\langle U(\cdot;t,x),U(\cdot;\tau,y) \bigr\rangle_s= \int
_0^s \,\ud r\int_\R\,\ud z
\rho^2 \bigl(u(r,z) \bigr) G_\nu(t-r,x-z)G_\nu
(\tau -r,y-z ).
\]
Hence, by \Itos lemma, for every $s\in[0,t]$,
$\E [U(s;t,x)U(s;\tau,y) ]$ is equal to
\[
J_0(t,x)J_0 (\tau,y ) + \int_0^s
\,\ud r\int_\R\,\ud z \E \bigl[\rho^2 \bigl(u(r,z)
\bigr) \bigr]G_\nu(t-r,x-z)G_\nu (\tau-r, y-z ).
\]
Finally, we choose $s= t$ and note that
$\E [u(t,x)u(\tau,y) ]=\E [u(t,x)U(t;\tau,y)
]$ to get
%
\begin{eqnarray}
\label{E2_:TP} %
&& \E \bigl[u(t,x)u (\tau,y ) \bigr] \nonumber\\
&&\qquad=
J_0(t,x)J_0 (\tau ,y )
\\
&&\qquad\quad{}+ \int_0^t \,\ud r \int_\R
\,\ud z \bigl\Vert\rho\bigl(u(r,z)\bigr)\bigr\Vert_2^2
G_\nu(t-r,x-z)G_\nu (\tau -r,y-z ). \nonumber
\end{eqnarray}
Then \eqref{E2:TP-Up}, \eqref{E2:TP-Lower} and \eqref{E2:TP} follow from
Lemma~\ref{L:IntIntGG}.
This completes the proof of Theorem~\ref{T2:ExUni}.
\end{pf*}

\subsection{Proofs of Corollary \texorpdfstring{\protect\ref{C2:TP-Delta}}{2.8} and
Proposition \texorpdfstring{\protect\ref{P2:D-Delta}}{2.11}}\label{Ss:EUM-OtherProof}
\mbox{}
\begin{pf*}{Proof of Corollary~\ref{C2:TP-Delta}}
In this case, $J_0(t,x)=G_\nu(t,x)$ and\break  $\lambda^2 J_0^2(t, x) =
\calL_0(t,x)$.
So, by \eqref{E2:SecMom} and \eqref{E:KS-Kn-K0},
\[
\E \bigl[\bigl|u(t,x)\bigr|^2 \bigr] = \frac{1}{\lambda^2}
\calL_0(t,x)+ \frac{1}{\lambda^2} (\calL_0\star\calK )
(t,x) + \vv^2\calH(t),
\]
yielding \eqref{E2:SecMom-Delta}.
By \eqref{E2:TP} [see also the equivalent formula
\eqref{E2_:TP}], $\E [u(t,x)\times u(t,y) ]=J_0(t,x)J_0
(t,y )
+ \lambda^2 I$, where
\[
I=\int_0^t \,\ud r \int_\R
\,\ud z \biggl(\vv^2 +\frac{1}{\lambda^2}\calK(r,z)+\vv^2
\calH(r) \biggr) G_\nu(t-r,x-z)G_\nu(t-r,y-z).
\]
Use Lemma~\ref{L2:GG} to replace the last two factors by
$G_{\nu/2}(t-r,z-(x+y)/2) G_{2\nu}(t-r,x-y)$, so that $z$ appears in
only one
factor. Then use formula \eqref{E:K2} and the semigroup property of
the heat
kernel
to see that
\begin{eqnarray*}
&&\frac{1}{\lambda^2} \bigl( \calK(r,\cdot) * G_{{\nu}/{2}}(t-r,\cdot) \bigr)
\biggl(\frac{x+y}{2} \biggr)\\
&&\qquad = G_{{\nu}/{2}} \biggl(t,\frac{x+y}{2}
\biggr) \biggl(\frac{1}{\sqrt{4\pi\nu
r}}+\frac{\lambda^2}{4\nu} \bigl(1+\calH(r) \bigr)
\biggr).
\end{eqnarray*}
Therefore,
\begin{eqnarray*}
I&=&\int_0^t G_{2\nu} (t-r,x-y )
\biggl( \biggl(\vv^2 +\frac{\lambda^2}{4\nu} G_{{\nu}/{2}} \biggl(t,
\frac{x+y}{2} \biggr) \biggr) \bigl(\calH(r)+1 \bigr)\\
&&{} + G_{{\nu}/{2}}
\biggl(t,\frac{x+y}{2} \biggr)\frac{1}{\sqrt{4\pi
\nu
r}} \biggr) \,\ud r.
\end{eqnarray*}
Then apply Lemmas \ref{L2:IntHG} and \ref{L2:IntGG} to evaluate the remaining
integrals over $\ud r$.
\end{pf*}

\begin{pf*}{Proof of Proposition~\ref{P2:D-Delta}}
If $\mu=\delta_0'$, then $J_0(t,x)=\frac{\partial}{\partial x}
G_\nu
(t,x) = -
\frac{x}{\nu t}
G_\nu(t,x)$. Suppose that \eqref{E2:WalshSI} has a random field
solution $u(t,x)$. Fix $(t,x)\in\R_+^*\times\R$.
Hence, by \eqref{E2:WalshSI} and \Itos isometry [see \eqref{E2:fint}],
$\Vert u(t,x)\Vert_2^2 \ge J_0^2(t,x)$.
Therefore,
\[
\bigl(G_\nu^2 \star\bigl\Vert\rho(u)\bigr\Vert_2^2
\bigr) (t,x) =\lambda^2 \bigl(G_\nu^2 \star
\Vert u\Vert_2^2 \bigr) (t,x)\ge \lambda^2
\bigl(G_\nu^2 \star J_0^2 \bigr)
(t,x).
\]
Write out the space--time convolution and apply the formulas in Lemma~\ref{L2:GG}
to see that it equals
\begin{eqnarray*}
&&\frac{G_{{\nu}/{2}}(t,x)}{4\pi\nu^3} \int_0^t\,\ud s
\frac{1}{s^2 \sqrt{s(t-s)}} \int_\R\,\ud y\, y^2
G_{{\nu}/{2}} \biggl(\frac{s(t-s)}{t},y-\frac{s}{t}x \biggr)
\\
&&\qquad= \frac{G_{{\nu}/{2}}(t,x)}{4\pi\nu^3} \int_0^t
\frac{1}{s^2 \sqrt{s(t-s)}}\E \biggl[Z^2 + \frac{s^2 x^2}{t^2} \biggr]\,\ud s,
\end{eqnarray*}
where $Z\sim N (0,\nu s(t-s)/(2t) )$ is a Normal random variable.
The expectation is equal to $\frac{\nu s}{2} -\frac{\nu
s^2}{2t}+\frac{s^2 x^2}{t^2}$, and the last two terms yield a finite integral,
but not the first term, so we conclude that
$ (G_\nu^2 \star\Vert\rho(u)\Vert_2^2 )(t,x)\ge+\infty$.
This violates property~(3) of Definition~\ref{D2:Solution}.
\end{pf*}

\section{Upper and lower bounds on growth indices}
\label{S:Proof-Exp}

Because the quasi-linear case corresponds to the case where
$\mathit{L}_\rho=\mathit{l}_\rho=|\lambda|$ and
$\Vip=\underline{\varsigma}=\vv$, part~(3) of Theorem~\ref
{T2:Growth} is a direct consequence
of parts (1) and (2). Hence, in the following, we only need to prove
parts (1)
and (2). We first recall a lemma.

\begin{lemma}[(\cite{ConusKhosh10Farthest})]\label{L2:Growth}
For $2\le a\le b<+\infty$, $\overline{\lambda}(a)\le\overline
{\lambda
}(b)$ and
$\underline{\lambda}(a)\le\underline{\lambda}(b)$.
\end{lemma}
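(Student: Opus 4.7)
The plan is to reduce both monotonicity statements to Lyapunov's (Jensen's) inequality for $L^p$ norms. Fix $2 \le a \le b < +\infty$ and $(t,x) \in \R_+^* \times \R$. Since the function $x \mapsto x^{b/a}$ is convex on $\R_+$, Jensen's inequality yields
\begin{equation*}
\E[|u(t,x)|^a]^{1/a} \le \E[|u(t,x)|^b]^{1/b},
\end{equation*}
which is equivalent to the pointwise bound
\begin{equation*}
\log \E[|u(t,x)|^a] \le \frac{a}{b}\, \log \E[|u(t,x)|^b].
\end{equation*}

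For the upper index, I would take $\sup_{|x| \ge \alpha t}$ on both sides (the positive factor $a/b$ preserves both the inequality and the supremum), divide by $t > 0$, and pass to $\limsup_{t \to \infty}$. This gives
\begin{equation*}
\limsup_{t\to\infty} \frac{1}{t}\sup_{|x| \ge \alpha t} \log \E[|u(t,x)|^a] \;\le\; \frac{a}{b}\,\limsup_{t\to\infty}\frac{1}{t}\sup_{|x| \ge \alpha t} \log \E[|u(t,x)|^b].
\end{equation*}
If $\alpha$ lies in the set defining $\overline{\lambda}(b)$, the right-hand side is negative, so (since $a/b > 0$) the left-hand side is also negative, meaning $\alpha$ lies in the set defining $\overline{\lambda}(a)$. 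The defining set for $b$ is therefore contained in that for $a$, and taking infima gives $\overline{\lambda}(a) \le \overline{\lambda}(b)$.

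For the lower index, I would rewrite the starting bound as $\frac{b}{a}\log \E[|u|^a] \le \log \E[|u|^b]$ and repeat the same $\sup$--$\limsup$ procedure. If $\alpha$ satisfies the defining condition for $\underline{\lambda}(a)$, i.e.\ the left-hand side has strictly positive $\limsup$, then the right-hand side has strictly positive $\limsup$ as well, so $\alpha$ belongs to the set defining $\underline{\lambda}(b)$. Taking suprema gives $\underline{\lambda}(a) \le \underline{\lambda}(b)$.

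There is no real obstacle here: the only point to watch is that the positive constant $a/b$ commutes correctly with $\sup$ and $\limsup$ and preserves sign, and that the inequality between the two $\limsup$'s is the right direction to carry set inclusions back through the $\inf$ (respectively $\sup$) defining $\overline{\lambda}$ (respectively $\underline{\lambda}$). Note that the argument makes no use of the specific structure of $u$; it is purely a statement about $L^p$-norm monotonicity.
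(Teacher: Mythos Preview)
Your argument is correct and is precisely the standard Lyapunov/Jensen route one would expect here. The paper does not actually prove this lemma---it is stated with a citation to \cite{ConusKhosh10Farthest} and no proof is given---so there is nothing to compare against beyond noting that your approach is the canonical one.
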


\subsection{Proof of the lower bound} \label{SS2:ExpInd-Low}
By the moment formula \eqref{E2:SecMom-Lower}, we can bound the second
moment of $u(t,x)$ from below provided we have a lower bound on $J_0(t,x)$.
The next lemma gives such a bound.

\begin{lemma}\label{L2:pLbd}
Assume that $\mu\in\calM_{H,+} (\R )$ and $\mu\ne0$.
For any $\varepsilon>0$ and $\xi\in\,]0,\nu[$, there exists a constant
$a_{\varepsilon,\xi,\nu}>0$ such that
\[
J_0(t,x)\ge a_{\varepsilon,\xi,\nu} 1_{[\varepsilon,+\infty[}(t)
G_\xi(t,x)\qquad\mbox{for all $t\ge \varepsilon $ and $x\in\R$.}
\]
\end{lemma}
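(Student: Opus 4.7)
The plan is to bound the convolution integral $J_0(t,x) = \int_\R G_\nu(t,x-y)\,\mu(\ud y)$ from below by restricting integration to a fixed compact set $[-M,M]$ on which $\mu$ has positive mass (possible since $\mu \ne 0$), and then to compare $G_\nu(t,x-y)$ with $G_\xi(t,x)$ pointwise in $x$ for $|y| \le M$ and $t \ge \epsilon$.

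The key computation is the ratio
\[
\frac{G_\nu(t,x-y)}{G_\xi(t,x)} = \sqrt{\frac{\xi}{\nu}}\exp\left(\frac{x^2}{2\xi t} - \frac{(x-y)^2}{2\nu t}\right).
\]
First I would expand the exponent as $\frac{1}{2\xi\nu t}\bigl((\nu-\xi)x^2 + 2\xi x y - \xi y^2\bigr)$. Since $\xi < \nu$, this is a quadratic in $x$ with positive leading coefficient, so its minimum over $x \in \R$ is attained at $x = -\xi y/(\nu-\xi)$ and equals $-y^2/(2(\nu-\xi)t)$. Therefore, uniformly in $x \in \R$,
\[
G_\nu(t,x-y) \ge \sqrt{\frac{\xi}{\nu}}\exp\left(-\frac{y^2}{2(\nu-\xi)t}\right) G_\xi(t,x).
\]

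Now choose $M > 0$ large enough so that $\mu([-M,M]) > 0$; this is possible because $\mu$ is a nonzero nonnegative Borel measure. For $|y| \le M$ and $t \ge \epsilon$, the exponential factor above is bounded below by $\exp\bigl(-M^2/(2(\nu-\xi)\epsilon)\bigr)$. Restricting the defining integral for $J_0$ to $[-M,M]$ yields
\[
J_0(t,x) \ge \sqrt{\frac{\xi}{\nu}}\,\exp\left(-\frac{M^2}{2(\nu-\xi)\epsilon}\right)\mu([-M,M])\,G_\xi(t,x)
\]
for all $t \ge \epsilon$ and all $x \in \R$, so the constant
\[
a_{\epsilon,\xi,\nu} := \sqrt{\frac{\xi}{\nu}}\,\exp\left(-\frac{M^2}{2(\nu-\xi)\epsilon}\right)\mu([-M,M])
\]
does the job.

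There is no real obstacle here; the only substantive point is the completion-of-squares estimate that controls the Gaussian ratio uniformly in $x$, and the observation that the condition $\xi < \nu$ is exactly what makes the quadratic in $x$ coercive (for $\xi \ge \nu$ the exponent would be unbounded above in $x$, and no such pointwise lower bound of the form $G_\xi(t,x)$ could hold). The dependence of $a_{\epsilon,\xi,\nu}$ on $\mu$ is implicit through $M$ and $\mu([-M,M])$, but $\mu$ is fixed in the statement.
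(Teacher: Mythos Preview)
Your proof is correct and takes essentially the same approach as the paper: both rest on the completion-of-squares estimate $G_\nu(t,x-y)/G_\xi(t,x) \ge \sqrt{\xi/\nu}\,\exp\bigl(-y^2/(2(\nu-\xi)t)\bigr)$ uniformly in $x$. The only cosmetic difference is that the paper integrates this bound over all of $\R$ (using $\mu\in\calM_{H,+}(\R)$ for finiteness) to obtain $a_{\epsilon,\xi,\nu}=\sqrt{2\pi(\nu-\xi)\xi\epsilon/\nu}\,(G_{\nu-\xi}(\epsilon,\cdot)*\mu)(0)$, whereas you truncate to $[-M,M]$.
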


\begin{pf}
It suffices to prove that
\[
g(t,x):= \frac{J_0(t,x)}{G_\xi(t,x)} = \sqrt{\xi/\nu} \int_\R
\exp \biggl(-\frac{(x-y)^2}{2\nu t}+\frac{x^2}{2\xi t} \biggr) \mu (\ud y)
\]
is strictly bounded away from zero for $t\in
[\varepsilon,+\infty[$ and $x\in\R$. Notice that for $0<\xi< \nu$,
\[
-\frac{(x-y)^2}{2\nu t}+\frac{x^2}{2\xi t} = -\frac{(\xi-\nu) [x-{\xi
y}/{(\xi-\nu)} ]^2}{2\nu\xi t} +
\frac{y^2}{2(\xi-\nu)t} \ge -\frac{y^2}{2(\nu-\xi)t}.
\]
Thus, for $t\in[\varepsilon,+\infty[$,
\begin{eqnarray*}
g(t,x) &\ge&\sqrt{\xi/\nu} \int_\R e^{-{y^2}/{(2(\nu-\xi) t)}} \mu (
\,\ud y ) \ge\sqrt{\xi/\nu} \int_\R e^{-{y^2}/{(2(\nu-\xi)
\varepsilon)}} \mu (
\ud y )
\\
&= &\sqrt{2\pi(\nu-\xi)\xi \varepsilon/\nu} \bigl(G_{\nu-\xi}(\varepsilon,
\cdot)*\mu \bigr) (0) =: a_{\varepsilon,\xi,\nu} ,
\end{eqnarray*}
which proves the lemma. We remark that
$ (G_{\nu-\xi}(\varepsilon,\cdot)*\mu )(0)$ is
strictly positive and finite because
$\mu\in\calM_{H,+} (\R )$,
$\mu\ne0$, and $G_{\nu-\xi}(\varepsilon,y)>0$ for all $y\in\R$.
\end{pf}

\begin{pf*}{Proof of Theorem~\ref{T2:Growth}(1)}
Due to Lemma~\ref{L2:Growth}, we only need to estimate
$\underline{\lambda}(2)$. Assume first that $\underline{\varsigma
}=0$. Fix $\varepsilon>0$. For
$\xi\in\,]0,\nu[$, use Lemma~\ref{L2:pLbd} to choose
$a=a_{\varepsilon,\xi
,\nu}>0$
such that
\[
J_0(t,x) \ge I_{0,l}(t,x) := a 1_{[\varepsilon,+\infty[}(t)
G_{\xi}(t,x).
\]
By \eqref{E:K} and since $\Phi(0)=1/2$,
\[
\underline{\calK}(t,x)\ge \frac{\mathit{l}_\rho^4}{4\nu} K(t,x)\qquad\mbox{with $K(t,x):=
G_{{\nu}/{2}}(t,x) e^{{\mathit{l}_\rho^4
t}/{(4\nu)}}$}.
\]
Set $f(t,x)= \E (u(t,x)^2 )$.
By \eqref{E2:SecMom-Lower} and
the above two inequalities, $f(t,x) \ge
\frac{\mathit{l}_\rho^4}{4\nu} ( I_{0,l}^2\star K  ) (t,x)$.
By Lemma~\ref{L2:GG},
\[
\bigl( I_{0,l}^2\star K \bigr) (t,x) = \frac{a^2}{2\sqrt{\pi\xi}}
e^{{\mathit{l}_\rho^4
t}/{(4\nu)}} \int_\varepsilon^t G_{{\nu}/{2}}
\biggl(t-\frac{(\nu-\xi
)s}{\nu},x \biggr)
 \frac{e^{-{\mathit{l}_\rho^4 s}/{(4\nu)}}}{\sqrt{s}}\,\ud s.
\]
Notice that for $s\in[\varepsilon, t]$,
\[
G_{{\nu}/{2}} \biggl(t-\frac{(\nu-\xi)s}{\nu},x \biggr) \ge
G_{{\xi}/{2}}(t,x)\sqrt{\frac{\xi t}{\nu
t-(\nu-\xi)\varepsilon}}
\]
and
\[
\int_\varepsilon^t \frac{e^{-{\mathit{l}_\rho^4 s}/{(4\nu
)}}}{\sqrt{s}}\,\ud s \ge
\frac{1}{\sqrt{t}} \int_\varepsilon^t e^{-{\mathit{l}_\rho
^4 s}/{(4\nu)}}
\,\ud s = \frac{4\nu}{\mathit{l}_\rho^4 \sqrt{t}} \bigl(e^{-{\mathit
{l}_\rho^4
\varepsilon}/{(4\nu)}}- e^{-{\mathit{l}_\rho^4 t}/{(4\nu)}} \bigr).
\]
Since $t\ge\varepsilon$,
\[
\bigl( I_{0,l}^2\star K \bigr) (t,x) \ge
\frac{2 a^2 \sqrt{\nu}}{\mathit{l}_\rho^4 \sqrt{\pi t}} G_{
{\xi
}/{2}}(t,x) \sqrt{\frac{\xi t}{\nu t-(\nu-\xi)\varepsilon}}
\bigl(e^{{\mathit{l}_\rho^4 (t-\varepsilon)}/{(4\nu)}}-1 \bigr).
\]
Thus,
\begin{eqnarray*}
&&\limsup_{t\rightarrow+\infty} \frac{1}{t}\sup_{|x|>\alpha t}
\log f(t,x)\\
&&\qquad \ge \liminf_{t\rightarrow+\infty} \frac{1}{t}\sup
_{|x|>\alpha t} \log f(t,x)
\\
&&\qquad\ge \lim_{t\rightarrow+\infty} \frac{1}{t}\sup_{|x|>\alpha t}
\log \bigl( e^{{\mathit{l}_\rho^4
(t-\varepsilon)}/{(4\nu)}} G_{{\xi}/{2}}(t,x) \bigr)= \frac{\mathit{l}_\rho^4}{4\nu} -
\frac{\alpha^2}{\xi}.
\end{eqnarray*}
The right-hand side is positive for $\alpha\le\sqrt{\xi/\nu} \mathit{l}_\rho^2/2$.
Since $\xi\in\,]0,\nu[ $ is arbitrary, we conclude that
$\underline{\lambda}(2) \ge\mathit{l}_\rho^2/2$.

As for the case $\underline{\varsigma}\ne0$, for all $\mu\in
\calM_{H,+} (\R )$, $f(t,x)\ge\underline{\varsigma}^2
\calH(t)$, and hence
\[
\liminf_{t\rightarrow\infty}\frac{1}{t}\sup_{|x|\ge\alpha t}
\log f(t,x) \ge \lim_{t\rightarrow\infty}\frac{1}{t} \log \bigl(
\underline {\varsigma}^2 \calH(t) \bigr) = \frac{\mathit{l}_\rho^4}{4\nu}>0\qquad
\mbox{for all $\alpha>0$}.
\]
Therefore, $\underline{\lambda}(2)=\infty$, which implies
$\overline{\lambda}(2)=\infty$. This proves part (1).
\end{pf*}

\subsection{Proof of the upper bound}\label{SS2:ExpInd-Up}
We need two lemmas.

\begin{lemma}\label{L2:MaxGGE}
For all $t>0$, $s>0$, $\sd>0$ and $x\in\R$, denote
\begin{eqnarray*}
&&H(x;\sd,t,s)\\
&&\qquad:=\sup_{(z_1,z_2)\in\R^2} G_{2\nu}(s,z_2-z_1)
G_{{\nu}/{2}} \biggl(t,x-\frac{z_1+z_2}{2} \biggr) \exp \bigl(-
\sd|z_1|-\sd|z_2| \bigr).
\end{eqnarray*}
Then
\[
H(x;\sd,t,s) \le \cases{\displaystyle \frac{1}{2\pi\nu\sqrt{t s}} \exp \biggl(-\frac{x^2}{\nu t}
\biggr), &\quad $\mbox{if $|x|\le\nu\sd t$,}$
\cr
\displaystyle\frac{1}{2\pi\nu\sqrt{t s}} \exp \bigl(-2
\sd|x| + \nu\sd^2 t \bigr), & \quad $\mbox{if $|x|\ge\nu\sd t $.}$}
\]
In particular, for all $x\in\R$, $\sd>0$, $t>0$ and $s>0$,
%
\begin{equation}
\label{E2:Hxbts} H(x;\sd,t,s) \le
\frac{1}{2\pi\nu\sqrt{t s}} \exp \bigl(-2\sd|x|+\nu
\sd^2 t \bigr).
\end{equation}
\end{lemma}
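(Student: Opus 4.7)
\textbf{Proof proposal for Lemma \ref{L2:MaxGGE}.}
The plan is to factor out the constant from the Gaussian normalizations, reduce the supremum to a one-variable optimization by a smart change of variables, and then solve the resulting piecewise quadratic maximization. Concretely, using the paper's definition $G_\nu(t,x) = (2\pi\nu t)^{-1/2}\exp(-x^2/(2\nu t))$, the product of the two heat kernels can be written as $\frac{1}{2\pi\nu\sqrt{ts}}$ times $\exp\bigl(-(z_2-z_1)^2/(4\nu s)-(x-(z_1+z_2)/2)^2/(\nu t)\bigr)$, which already produces the prefactor $\frac{1}{2\pi\nu\sqrt{ts}}$ announced in the lemma.

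Next I would change variables to $u=z_2-z_1$ and $v=(z_1+z_2)/2$, so that $z_1=v-u/2$, $z_2=v+u/2$. The key elementary inequality is $|z_1|+|z_2|=\max(|u|,2|v|)\ge 2|v|$, so $\exp(-\sd(|z_1|+|z_2|))\le\exp(-2\sd|v|)$. After this bound the supremum factorizes:
\begin{align*}
H(x;\sd,t,s) \le \frac{1}{2\pi\nu\sqrt{ts}}\sup_{u\in\R}\exp\!\Bigl(-\frac{u^2}{4\nu s}\Bigr)\cdot\sup_{v\in\R}\exp\!\Bigl(-\frac{(x-v)^2}{\nu t}-2\sd|v|\Bigr).
\end{align*}
The supremum in $u$ is $1$ (attained at $u=0$), so everything reduces to maximizing $f(v):=-(x-v)^2/(\nu t)-2\sd|v|$ over $v\in\R$.

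By symmetry it suffices to treat $x\ge 0$, and one checks that the maximum lies on $v\ge 0$, where $f$ is smooth with $f'(v)=2(x-v)/(\nu t)-2\sd$. The critical point is $v^\ast=x-\nu\sd t$. If $|x|\ge\nu\sd t$, then $v^\ast\ge 0$ is admissible and $f(v^\ast)=-2\sd|x|+\nu\sd^2 t$, giving the second branch of the bound. If $|x|\le\nu\sd t$, the critical point falls outside $[0,\infty[$ and $f$ is decreasing there, so the maximum is at $v=0$, giving $f(0)=-x^2/(\nu t)$, i.e.\ the first branch. The uniform bound \eqref{E2:Hxbts} is then immediate: in the regime $|x|\le\nu\sd t$ one has $(|x|/\sqrt{\nu t}-\sd\sqrt{\nu t})^2\ge 0$, which rearranges to $-x^2/(\nu t)\le -2\sd|x|+\nu\sd^2 t$.

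There is no real obstacle here; the only subtlety is recognizing the identity $|z_1|+|z_2|=\max(|u|,2|v|)$ so that the exponential penalty decouples the two new variables. Once that is in place, the proof is a standard one-dimensional quadratic optimization that splits into the two announced cases.
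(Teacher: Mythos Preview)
Your proof is correct and follows essentially the same route as the paper: after extracting the prefactor $\frac{1}{2\pi\nu\sqrt{ts}}$, the paper changes variables to $u=(z_1-z_2)/2$, $w=(z_1+z_2)/2$ and uses $|u+w|+|u-w|\ge 2|w|$ (equivalent to your identity $|z_1|+|z_2|=\max(|u|,2|v|)\ge 2|v|$) to reduce to minimizing $f(w)=(x-w)^2/(\nu t)+2\sd|w|$, which is exactly your one-variable optimization. The case split and the final inequality $-x^2/(\nu t)\le -2\sd|x|+\nu\sd^2 t$ are handled identically.
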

\begin{pf}
We only need to maximize over $(z_1,z_2)\in\R^2$ the exponent
\[
- \frac{(z_1-z_2)^2}{4\nu s} -\frac{ (x-{(z_1+z_2)}/{2} )^2}{\nu t} -\sd|z_1| -
\sd|z_2|.
\]
By the change of variables $u=\frac{z_1-z_2}{2}$, $w=\frac
{z_1+z_2}{2}$, we
have that
\[
\frac{u^2}{\nu s} + \frac{(x-w)^2}{\nu t} + \sd \bigl(|u+w|+|u-w| \bigr) \ge
\frac{(x-w)^2}{\nu t} +2 \sd|w| := f(w).
\]
Hence, we only need to minimize $f(w)$ for $w\in\R$.
Hence,
\[
\min_{w\in\R} f(w) = \cases{\displaystyle \frac{x^2}{\nu t}, & \quad $\mbox{if
$|x|\le\nu\sd t$}$,
\cr
2\sd|x| - \nu t \sd^2, &\quad  $\mbox{if $|x|\ge\nu
\sd t$}$.}
\]
This also implies \eqref{E2:Hxbts} since $\frac{x^2}{\nu t} \ge2 \sd
|x| - \nu
t \sd^2$ for all $x\in\R$.
\end{pf}

\begin{lemma}\label{L2:InDt-G}
Suppose $\mu\in\calM_G^{\sd} (\R )$ with
$\sd>0$. Set
$C=\int_\R e^{\sd|x|} |\mu|(\ud x)$.
Let $K(t,x)=G_{\nu/2}(t,x) h(t)$
for some nonnegative function $h(t)$. Then
%
\begin{eqnarray}
\label{E2:J0Bd-G} J_0^2(t,x) & \le&\frac{C^2}{2\pi\nu t}
e^{-2 \sd|x|+ \nu\sd^2 t},
\\
\bigl(J_0^2 \star K \bigr) (t,x) &\le&
\frac{C^2}{2\pi\nu\sqrt{t}} e^{-2 \sd|x|+ \nu\sd^2 t} \int_0^t
\frac{h(t-s)}{\sqrt{s}}\,\ud s. \label{E2:J0KBd-G}
\end{eqnarray}
\end{lemma}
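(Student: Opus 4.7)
The plan is to handle \eqref{E2:J0Bd-G} directly via the triangle inequality, and then derive \eqref{E2:J0KBd-G} by the same double-integral representation of $J_0^2$ used in Lemma \ref{L2:InDt}, combined with Lemma \ref{L2:MaxGGE}.

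For \eqref{E2:J0Bd-G}, I would start from the pointwise bound $|J_0(t,x)|\le \int_\R G_\nu(t,x-y)|\mu|(\ud y)$ and use $|x|\le|x-y|+|y|$ to write
$$e^{\sd|x|}G_\nu(t,x-y)\le e^{\sd|y|}\bigl(e^{\sd|x-y|}G_\nu(t,x-y)\bigr).$$
A one-variable calculus exercise shows that the function $z\mapsto e^{\sd|z|}G_\nu(t,z)$ achieves its maximum at $|z|=\nu\sd t$ with value $(2\pi\nu t)^{-1/2}\exp(\nu\sd^2 t/2)$. Pulling this supremum out of the integral and using $\int e^{\sd|y|}|\mu|(\ud y)=C$ yields
$$e^{\sd|x|}\,|J_0(t,x)|\le \frac{C}{\sqrt{2\pi\nu t}}\exp\!\left(\frac{\nu\sd^2 t}{2}\right),$$
and squaring gives \eqref{E2:J0Bd-G}.

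For \eqref{E2:J0KBd-G}, after reducing to $\mu\ge 0$ (by replacing $\mu$ with $|\mu|$ as in the proof of Lemma \ref{L2:InDt}), I would write $J_0^2(s,z)$ as a double integral against $\mu\otimes\mu$, plug it into $(J_0^2\star K)(t,x)$, and apply Lemma \ref{L2:GG} to rewrite $G_\nu(s,z-z_1)G_\nu(s,z-z_2)$ as $G_{2\nu}(s,z_2-z_1)\,G_{\nu/2}(s,z-\bar z)$ with $\bar z=(z_1+z_2)/2$. The semigroup property then integrates $z$ out of $G_{\nu/2}(s,z-\bar z)G_{\nu/2}(t-s,x-z)$, producing $G_{\nu/2}(t,x-\bar z)$ exactly as in \eqref{E2:InDt-Mid}, so that
$$(J_0^2\star K)(t,x)=\int_0^t\!h(t-s)\,\ud s\iint_{\R^2} G_{2\nu}(s,z_2-z_1)G_{\nu/2}(t,x-\bar z)\,\mu(\ud z_1)\mu(\ud z_2).$$
At this point, inserting the factors $e^{\sd|z_1|+\sd|z_2|}e^{-\sd|z_1|-\sd|z_2|}$ and invoking the uniform bound \eqref{E2:Hxbts} from Lemma \ref{L2:MaxGGE} decouples $x$ from $(z_1,z_2)$. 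The double $(z_1,z_2)$-integral collapses to $C^2$, the $(t,s,x)$-prefactor equals $(2\pi\nu\sqrt{ts})^{-1}\exp(-2\sd|x|+\nu\sd^2 t)$, and pulling the $x$-dependence outside the $s$-integral recovers \eqref{E2:J0KBd-G}.

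I do not anticipate any real obstacle: both inequalities follow by concatenating tools already in place. The one point requiring mild care is the reduction from signed to non-negative $\mu$ for \eqref{E2:J0KBd-G}, which is handled as in Lemma \ref{L2:InDt} using $|J_0(t,x)|\le (G_\nu(t,\cdot)*|\mu|)(x)$; apart from this, the proof is a routine execution of the double-integral splitting and the maximization argument of Lemma \ref{L2:MaxGGE}.
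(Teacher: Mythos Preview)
Your proposal is correct and follows essentially the same route as the paper: for \eqref{E2:J0KBd-G} you reproduce the paper's argument verbatim (double-integral representation \eqref{E2:InDt-Mid} followed by the uniform bound \eqref{E2:Hxbts}), and for \eqref{E2:J0Bd-G} the paper factors $G_\nu(t,x-y)=\bigl(G_\nu(t,x-y)e^{-\sd|y|}\bigr)e^{\sd|y|}$ and maximizes the bracketed term directly over $y$ (via the minimization already carried out in the proof of Lemma \ref{L2:MaxGGE}), whereas you insert the triangle inequality $|x|\le|x-y|+|y|$ first and then maximize over $z=x-y$. The two optimizations lead to the same bound, so the difference is cosmetic.
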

\begin{pf}
Clearly,
\[
\bigl\llvert J_0(t,x)\bigr\rrvert \le \Bigl(\sup
_{y\in\R} G_\nu (t,x-y ) e^{-\sd|y|} \Bigr) \int
_\R e^{\sd
|x|}|\mu|(\ud y).
\]
The supremum is determined by minimizing
$\frac{(x-y)^2}{2\nu t} +\sd|y|$ over $y\in\R$, which has been done
in the
proof of Lemma~\ref{L2:MaxGGE}, and \eqref{E2:J0Bd-G} follows.
The proof of \eqref{E2:J0KBd-G} is similar to Lemma~\ref{L2:InDt}. By
\eqref{E2:InDt-Mid} and Lemma~\ref{L2:MaxGGE},
\begin{eqnarray*}
\bigl(J_0^2 \star K \bigr) (t,x) &\le&\int
_0^t H(x;\sd,t,s) h(t-s) \,\ud s
\iint_{\R^2}e^{\sd
|z_1|+\sd|z_2|}|\mu|(\ud z_1) |\mu|(\ud
z_2)
\\
& =& \biggl(\int_\R e^{\sd|x|}|\mu|(\ud x)
\biggr)^2\int_0^t H(x;\sd,t,s)
h(t-s) \,\ud s.
\end{eqnarray*}
Then apply \eqref{E2:Hxbts}.
\end{pf}

Note that one can apply the bound in
\eqref{E2:InDt} to \eqref{E2:SecMom-Up} and then Lemma~\ref{L2:InDt-G}
to get
$\overline{\lambda}(2)\le
\mathit{L}_\rho^2/\sqrt{2}$.
But we need a better estimate with $\sqrt{2}$ replaced by $2$. This gap
is due
to the factor $2$ in $J_0^*(2t,x)$ of \eqref{E2:InDt}, coming from Lemma~\ref{L2:Split}, which is not optimal.

\begin{pf*}{Proof of Theorem~\ref{T2:Growth}(2)}
Assume that $\Vip=0$. We first consider $\overline{\lambda}(2)$. Set
$f(t,x)=\E(u(t,x)^2)$. Fix $\sd>0$.
Without loss of generality, assume that $\mu\in\calM_G^{\sd}(\R)$ is
nonnegative; otherwise, simply replace all $\mu$ below by $|\mu|$.
By \eqref{E:K},
\[
\overline{\calK}(t,x) \le h(t) G_{{\nu}/{2}}(t,x)\qquad \mbox{with }
h(t)=
\frac{\mathit{L}_\rho^2}{\sqrt{4\pi\nu t}} + \frac{\mathit{L}_\rho^4}{2\nu}
\exp \biggl(\frac{\mathit
{L}_\rho^4 t}{4\nu}
\biggr),
\]
so \eqref{E2:SecMom-Up} implies that
\[
f(t,x)\le J_0^2(t,x)+ \bigl(J_0^2(
\cdot,\circ)\star G_{{\nu}/{2}}(\cdot,\circ) h(\cdot) \bigr) (t,x).
\]
By Lemma~\ref{L2:InDt-G}, \eqref{E2:BetaInt} and \eqref{E2:ExpDivSqrt},
\begin{eqnarray*}
f(t,x)\le \frac{C^2}{2\pi\nu t} e^{- 2 \sd|x|+\nu\sd^2 t}+\frac{C^2\mathit{L}_\rho^2}{2
\pi^{1/2}\nu^{3/2}\sqrt{t}} \biggl(
\frac{1}{2}+ e^{{\mathit{L}_\rho^4 t}/{(4\nu)} } \biggr) e^{-2\sd|x|+\nu
\sd^2 t}.
\end{eqnarray*}
Therefore, for $\alpha>0$,
\begin{eqnarray*}
\sup_{|x|> \alpha t} f(t,x)\le  \frac{C^2}{2\pi\nu t} e^{\sd^2 \nu t- 2 \sd\alpha t}+
\frac{C^2\mathit{L}_\rho^2}{2
\pi^{1/2}\nu^{3/2}\sqrt{t}} \biggl(\frac{1}{2}
+ e^{{\mathit{L}_\rho^4 t}/{(4\nu)}} \biggr)
e^{-2\sd\alpha t+\nu\sd^2 t}.
\end{eqnarray*}
Now, the exponential growth rate comes from the second term, and
\begin{eqnarray*}
\frac{\mathit{L}_\rho^4 t }{4\nu} - 2 \sd\alpha t+ \nu\sd^2 t<0
\quad\Longleftrightarrow \quad\alpha>\frac{\sd\nu}{2} + \frac{\mathit{L}_\rho^4}{8\nu\sd}.
\end{eqnarray*}
Therefore,
\[
\overline{\lambda}(2)\le\inf \biggl\{\alpha>0\dvtx\limsup_{t\rightarrow\infty}
\frac{1}{t}\sup_{|x|>\alpha t}\log f(t,x)<0 \biggr\}\le
\frac{\sd\nu}{2} + \frac{\mathit{L}_\rho^4}{8\nu\sd
}.
\]
Notice that
the function $\sd\mapsto\frac{\sd\nu}{2} + \frac{\mathit{L}_\rho
^4}{8\nu
\sd}$ is decreasing for $\sd\le\frac{\mathit{L}_\rho^2}{2\nu}$ and
increasing for
$\sd\ge\frac{\mathit{L}_\rho^2}{2\nu}$, with minimum value
$\mathit{L}_\rho^2/\nu$,
and $\calM_G^{\sd} (\R ) \subseteq
\calM_G^{\mathit{L}_\rho^2/(2\nu)} (\R )$ for $\sd\ge
\frac{\mathit{L}_\rho^2}{2\nu}$. This yields the desired upper bound.

\begin{figure}[b]

\includegraphics{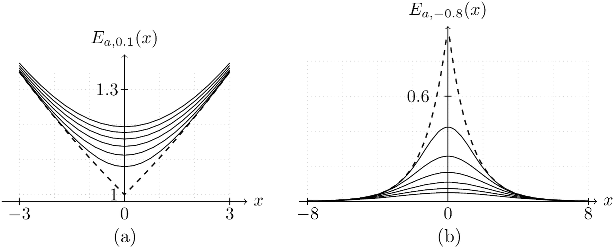}

\caption{The dashed lines in both figures denote the graph of $e^{\sd|x|}$.
The
solid lines
from bottom to top are $E_{a,\sd}(x)$ with the parameter $a$ ranging
from $1$ to
$6$ for Figure \protect\ref{F2:E}\textup{(a)} and from $6$ to $1$ for
Figure \protect\ref{F2:E}\textup{(b)},
which are representative of the cases $\beta > 0$ and $\beta <
0$, respectively. The parameter $\sd$ controls the asymptotic behavior near infinity while
both $a$ and $\sd$ determine how the function $e^{\sd|x|}$ is smoothed
at zero. The smaller $a$ is, the closer $E_{a,\sd}(0)$ is to $1$.}
\label{F2:E}
\end{figure}

Now fix an even integer $p\ge2$. Because the definition of
$\overline{\lambda}(p)$ differs from that of $\overline{\lambda
}(2)$ by
the use
of $\Vert u(t,x)\Vert_p^2$, we only need to make the following changes
in the
above proof:
(1) Replace $f(t,x)$ by $\Vert u(t,x)\Vert_{p}^2$.
(2) As in \eqref{E2:SecMom-Up}, replace $J_0^2(t,x)$ by $2J_0^2(t,x)$.
(3) Replace $\overline{\calK}(t,x)$ by
$\widetilde{\calK}_{p}(t,x)$, which is
equivalent to replacing $\mathit{L}_\rho$ everywhere by $\sqrt{2}
z_{p}\mathit{L}_\rho$.
This proves (2).
\end{pf*}

\subsection{Proof of Proposition \texorpdfstring{\protect\ref{P2:Ex-Exp}}{2.14}}
\label{SS2:ExpInd-Spe}

For $a>0$ and $\sd\in\R$, define
%
\begin{equation}
\label{E2:Eac} E_{a,\sd}(x):= e^{-\sd x} \Phi \biggl(
\frac{a \sd-x}{\sqrt{a}} \biggr) + e^{\sd x} \Phi \biggl(\frac{a \sd+x}{\sqrt{a}}
\biggr),
\end{equation}
which is a smooth version of the continuous function $e^{\sd|x|}$ (see
Figure~\ref{F2:E}).
Equivalently, by Proposition~\ref{P2:E}(ii),
%
\begin{equation}
\label{E2:Eac2} E_{a,\sd}(x) = e^{-\sd^2 a/2} \bigl(e^{\sd|\cdot|}*
G_a(1,\cdot ) \bigr) (x).
\end{equation}
Note that the function $ (e^{\sd|\cdot|}*G_\nu(t,\cdot) )(x)$
is the
solution to the homogeneous heat equation \eqref{E2:Heat-home} with initial
condition $\mu(\ud x)=e^{\sd|x|}\,\ud x$.
See Proposition~\ref{P2:E} below for its properties.

%
%
Recall (\cite{NIST2010}, Equation~7.12.1) that
%
\begin{eqnarray}
\label{E2:Phi-Asy}
1-\Phi(x) &\sim&\frac{e^{-x^2/2}}{\sqrt{2\pi} x}\quad \mbox{as }x\rightarrow+
\infty\quad \mbox{and}
\nonumber
\\[-8pt]
\\[-8pt]
\nonumber
\Phi(x) &\sim&\frac{e^{-x^2/2}}{\sqrt{2\pi} |x|} \qquad\mbox{as }
x\rightarrow-\infty.
\end{eqnarray}

\begin{pf*}{Proof of Proposition~\ref{P2:Ex-Exp}}
The fact that $\overline{\lambda}(2)$ is bounded above by the
expression in
\eqref{E2:Ex-Exp} follows from Theorem~\ref{T2:Growth} since
$\mu\in\calM_{G,+}^{\sd'}(\R)$, for any $\sd'<\sd$.
We now establish the corresponding lower bound on $\underline{\lambda}(2)$.
Set $f(t,x)=\E(u(t,x)^2)$.
If $\mu(\ud x) = e^{-\sd|x|}\,\ud x$ with $\sd>0$, then by
\eqref{E2:Eac2}, $J_0(t,x) = e^{\sd^2\nu t /2}
E_{\nu t,-\sd}(x)$ and by Proposition~\ref{P2:E}(iv),
%
\begin{equation}
\label{E2_:J2-twoBds} J_0^2(t,x) \geq e^{ \sd^2\nu t}
\Phi^2 (-\sd\sqrt{\nu t} )E_{\nu t,-2 \sd}(x). 
\end{equation}
By \eqref{E2:Eac2} and the lower bound in \eqref{E2_:J2-twoBds},
\[
J_0^2(t,x) \ge e^{ -\sd^2\nu t} \Phi^2 (-
\sd\sqrt{\nu t} ) \bigl(e^{-2
\sd
|\cdot|}* G_\nu(t,\cdot) \bigr) (x).
\]
Thus, by \eqref{E2:SecMom} and the fact that
$\calK(t,x)\ge\frac{\lambda^4}{4\nu}G_{\nu/2}(t,x)
\exp (\frac{\lambda^4 t}{4\nu} )$,
\begin{eqnarray*}
f(t,x)&\ge& \int_0^t e^{-\sd^2 \nu(t-s)}
\Phi^2 \bigl(-\sd\sqrt{\nu (t-s)} \bigr)\frac{\lambda^4}{4\nu}
\\
&&{}\times e^{{\lambda^4 s}/{(4\nu)}} \biggl(e^{-2 \sd|\cdot|}*G_\nu \biggl(t-
\frac{s}{2},\cdot \biggr) \biggr) (x) \,\ud s.
\end{eqnarray*}
Noticing that by Proposition~\ref{P2:E}(ii) and (vi),
\begin{eqnarray*}
&&\biggl(e^{-2 \sd|\cdot|}*G_\nu \biggl(t-\frac{s}{2},\cdot
\biggr) \biggr) (x)\\
&&\qquad = e^{2 \sd^2\nu(t-{s}/{2})}
 E_{\nu(t-{s}/{2}),-2 \sd}(x) \ge e^{2 \sd^2\nu(t-s/2)}
E_{{\nu t}/{2},-2 \sd}(x),
\end{eqnarray*}
we have that
\begin{eqnarray*}
f(t,x) \ge E_{{\nu t}/{2},-2 \sd}(x) e^{ \sd^2\nu t} \int_0^t
\frac
{\lambda
^4}{4\nu} \Phi^2 \bigl(-\sd\sqrt{\nu(t-s)} \bigr)
e^{{\lambda^4
s}/{(4\nu)}} \,\ud s.
\end{eqnarray*}
Choose an arbitrary constant $c\in[0,1[$. The integral above is
bounded by
\begin{eqnarray*}
\int_0^t \frac{\lambda^4}{4\nu}
\Phi^2 \bigl(-\sd\sqrt{\nu(t-s)} \bigr) e^{{\lambda^4
s}/{(4\nu)}} \,\ud s &\ge&
\Phi^2 \bigl(-\sd\sqrt{\nu(1-c)t} \bigr) \int_{c t}^t
\frac{\lambda^4}{4\nu} e^{{\lambda^4 s}/{(4\nu)}} \,\ud s
\\
&=& \Phi^2 \bigl(-\sd\sqrt{\nu(1-c)t} \bigr) \bigl(e^{{\lambda^4
t}/{(4\nu)}}-
e^{{c \lambda^4
t}/{(4\nu)}} \bigr).
\end{eqnarray*}
Hence,
\[
f(t,x)\ge E_{{\nu t}/{2},-2 \sd}(x) e^{ \sd^2\nu t} \Phi^2 \bigl(-\sd
\sqrt {\nu (1-c)t} \bigr) \bigl( e^{{\lambda^4 t}/{(4\nu)}}
 -e^{{c\lambda^4 t}/{(4\nu)}} \bigr).
\]
By Proposition~\ref{P2:E}(v), for $\alpha>0$,
\[
\sup_{|x|> \alpha t} E_{{\nu t}/{2},-2 \sd}(x) = E_{{\nu t}/{2},-2
\sd}(\alpha
t).
\]
Notice that
\begin{eqnarray*}
&&E_{{\nu t}/{2},-2 \sd}(\alpha t) \\
&&\qquad= e^{2 \sd\alpha t} \Phi \biggl(- \biggl[2 \sd\sqrt{
\frac{\nu}{2}} + \alpha\sqrt{\frac
{2}{\nu
}} \biggr] \sqrt{t} \biggr)+
e^{-2 \sd\alpha t} \Phi \biggl( \biggl[\alpha\sqrt{\frac{2}{\nu}}-2 \sd
\sqrt{\frac
{\nu
}{2}} \biggr] \sqrt{t} \biggr).
\end{eqnarray*}
If $\alpha\sqrt{\frac{2}{\nu}}-2 \sd\sqrt{\frac{\nu}{2}} \ge
0$, that is,
$\alpha\ge\sd\nu$,
then by \eqref{E2:Phi-Asy}, the second term dominates and so for large $t$,
\[
E_{{\nu t}/{2},-2 \sd}(\alpha t) \ge\tfrac{1}{4}e^{-2 \sd\alpha t}.
\]
Otherwise, if $\alpha<\sd\nu$, then by \eqref{E2:Phi-Asy}, for
large $t$,
\[
e^{\pm2 \sd\alpha t} \Phi \biggl(\mp \biggl[\frac{\alpha}{\sqrt{\nu/2}}\pm2 \sd\sqrt {\nu
/2} \biggr] \sqrt{t} \biggr) \approx
\frac{\sqrt{\nu} \exp \{- (\sd^2\nu+ {\alpha
^2}/{\nu
} )
t \}}{2\sqrt{\pi}\llvert \alpha\pm\sd\nu\rrvert \sqrt{t}}.
\]
So $E_{\nu t/2,-2 \sd}(\alpha t)$ has a lower bound with the exponent
$-2\sd\alpha t$ if $\alpha\ge\sd\nu$, and $-(\sd^2 \nu+
\alpha^2/\nu) t$ if $\alpha<\sd\nu$.
For large $t$, by \eqref{E2:Phi-Asy}, the function $t\mapsto
\Phi^2 (-\sd\sqrt{\nu(1-c)t} )$ contributes to an exponent
$\sd^2 \nu
(c-1)t$.
Therefore,
\[
\lim_{t\rightarrow\infty} \frac{1}{t} \sup_{|x|> \alpha t}
\log f(t,x) \ge \cases{\displaystyle c \sd^2\nu+ \frac{\lambda^4}{4\nu} - 2
\sd\alpha,&\quad
$\mbox{if $\alpha \geq \sd\nu$,}$
\cr
\displaystyle (c-1)\sd^2\nu+
\frac{\lambda^4}{4\nu} -\frac{\alpha^2}{\nu}, & \quad$\mbox{if $\alpha< \sd\nu$.}$}
\]
%

We now consider two cases. First, suppose that $\sd< \frac{\lambda
^2}{2\nu\sqrt{2-c}}$. This inequality is equivalent to $\frac{c \nu
\sd}{2}+ \frac{\lambda^4}{8\nu\sd}> \sd\nu$, and
\[
c\sd^2\nu+\frac{\lambda^4}{4\nu} -2 \sd\alpha>0 \quad\Leftrightarrow\quad \alpha <
\frac{c \nu\sd}{2}+ \frac{\lambda^4}{8\nu\sd}.
\]
Therefore, $\underline{\lambda}(2)\ge
\frac{c \nu\sd}{2}+ \frac{\lambda^4}{8\nu\sd}$ in this first case.
Second, suppose that $\sd\geq\frac{\lambda^2}{2\nu\sqrt{2-c}}$. This
inequality is equivalent to $\sqrt{\frac{\lambda^4}{4}+(c-1)\sd
^2\nu
^2} \leq\sd\nu$, and
\[
(c-1)\sd^2\nu+ \frac{\lambda^4}{4\nu} -\frac{\alpha^2}{\nu}>0
\quad\Leftrightarrow\quad \alpha< \sqrt{\frac{\lambda^4}{4}+(c-1)\sd^2
\nu^2}.
\]
Therefore, $\underline{\lambda}(2)\ge\sqrt{\frac{\lambda
^4}{4}+(c-1)\sd
^2\nu^2}$ in this second case.

Finally, since the constant $c$ can be
arbitrarily close to $1$,
this completes the proof.
\end{pf*}
\begin{appendix}\label{app}
\section*{Appendix}\label{sec4}
%
\begin{lemmas}\label{LA:Phi-E}
$\pi\int_0^t e^{\pi b^2 u} \Phi (\sqrt{2\pi b^2 u} ) \,\ud
u
= \frac{e^{\pi b^2 t} \Phi (\sqrt{2\pi b^2 t} ) }{b^2}
-\frac{1}{2b^2}
-\frac{\sqrt{t}}{b}$, $b\ne0$.
\end{lemmas}
\begin{pf}\hspace*{-6pt}
By integration by parts, the left-hand side equals\break 
$\frac{e^{\pi b^2 u} \Phi (\sqrt{2\pi b^2 u} ) }{b^2}
|_{u=0}^{u=t} - \frac{1}{b^2} \int_0^t \frac{b}{2\sqrt{s}} \,\ud s$.
\end{pf}

\begin{lemmas}\label{L2:LogRel}
For $0< a < b$, we have that
%
\setcounter{equation}{0}
\begin{equation}
\label{E2:log} \frac{\log(b/a)}{b-a}\ge\frac{1}{b}.
\end{equation}
The function $f(s)=(a-s)(b-s)\log\frac{b-s}{a-s}$ is nonincreasing over
$s\in
[0,a[$ with $\inf_{s\in[0,a[}f(s) = \lim_{s\rightarrow a}f(s)
=(b-a)\log(b-a)$
and $\sup_{s\in[0,a[}f(s) = f(0) =a b \log(b/a)$.
\end{lemmas}

\begin{pf}
Note that \eqref{E2:log} is equivalent to
the following statements:
\begin{eqnarray*}
\frac{-\log s}{1-s}\ge1,\qquad s\in\,]0,1[ \quad \Longleftrightarrow \quad
s-\log s\ge1, \qquad s
\in\,]0,1[.
\end{eqnarray*}
Let $g(s)=s-\log s$ with $s\in\,]0,1[$. Then $g(s)$ is nonincreasing since
$g'(s)=(s-1)/s<0$ for $s\in\,]0,1[ $. So
$g(s)\ge\lim_{s\rightarrow1} g(s)=1$. This proves \eqref{E2:log}.
As for the function $f(s)$, we only need to show that
\[
f'(s)=(b-a)-(a+b-2s)\log\frac{b-s}{a-s}\le0\qquad\mbox{for all $s
\in[0,a[$}.
\]
Let $g(s)=\frac{b-a}{a+b-2s}-\log\frac{b-s}{a-s}$. Then the above
statement is
equivalent to the inequality $g(s)\le0$ for all $s\in[0,a[ $. By
\eqref{E2:log}, we know that
\[
g(0)=\frac{b-a}{a+b}-\log\frac{b}{a}\le (b-a) \biggl(
\frac{1}{a+b}-\frac{1}{b} \biggr)\le0.
\]
So it suffices to show that
\[
g'(s) = \frac{2(b-a)}{(a+b-2s)^2} +\frac{1}{b-s}-
\frac{1}{a-s}\le 0 \qquad\mbox{for all $s\in[0,a[$}.
\]
After simplifications, this statement is equivalent to
\[
s^2 -(a+b) s + \frac{a^2+b^2}{2}\ge0 \qquad \mbox{for all $s\in[0,a[$} ,
\]
which is clearly true since the discriminant is $-(a+b)^2<0$. This completes
the proof.
\end{pf}

\begin{propositionn}\label{P2:G-Margin}
Fix $(t,x)\in\R_+^*\times\R$. Set
\[
B_{t,x}= \bigl\{ \bigl(t',x' \bigr)\in
\R_+^*\times\R\dvtx0< t'\le t+\tfrac{1}{2}, \mbox{ and } \bigl
\llvert x'-x\bigr\rrvert \le1 \bigr\}.
\]
Then there exists $a=a_{t,x}>0$ such
that for all $ (t',x' )\in B_{t,x}$, $s\in[0,t']$ and
\mbox{$|y|\ge a$},
\[
G_\nu\bigl(t'-s,x'-y\bigr) \le
G_\nu(t+1-s,x-y).
\]
\end{propositionn}

\begin{pf}
Since $t+1-s$ is strictly larger than $t'-s$, the function $y\mapsto
G_\nu(t+1-s,x-y)$ has heavier tails than $y\mapsto
G_\nu(t'-s,x'-y)$. Solve the inequality $ G_\nu(t+1-s,x-y) \ge G_\nu
(t'-s,x'-y)$
with $t,t',x,x'$ and $s$ fixed, which is a quadratic inequality for $y$:
\[
-\frac{(x'-y)^2}{t'-s} +\frac{(x-y)^2}{t+1-s} \le\nu\log \biggl(\frac{t'-s}{t+1-s}
\biggr).
\]
Let $y_{\pm}(t,x,t',x',s)$ be the two solutions of the corresponding quadratic
equation, which are
\begin{eqnarray*}
&&\frac{1}{t +
1 - t'}\biggl((t+1-s)x'-x\bigl(t'-s\bigr)\\
&&\qquad{} \pm
\biggl [(t+1-s)\bigl(t'-s\bigr)\\
&&\qquad{}\times \biggl\{\bigl(x-x'\bigr)^2+\bigl(t+1-t'\bigr)\nu
\log\biggl (\frac{t+1-s}{t'-s} \biggr) \biggr\} \biggr]^{1/2}\biggr).
\end{eqnarray*}
Then a sufficient condition for the above inequality is $|y|\ge
|y_+|\vee
|y_-|$.
So we only need to show that
\[
\sup_{ (t',x' )\in B_{t,x}} \sup_{s\in[0,t']} \bigl|y_{+}
\bigl(t,x,t',x',s\bigr)\bigr|\vee \bigl|y_{-}
\bigl(t,x,t',x',s\bigr)\bigr|<+\infty.
\]
By Lemma~\ref{L2:LogRel},
the supremum over $s\in[0,t']$ of the quantity under the square root is
\[
t'(t+1) \biggl[\bigl(x-x'\bigr)^2+
\bigl(t+1-t'\bigr)\nu\log\frac{t+1}{t'} \biggr],
\]
so, using the fact that $\llvert x'-x\rrvert \le1$, we see that
\begin{eqnarray*}
\hspace*{-6pt}&&|y_{+}|\vee|y_{-}| \\
\hspace*{-6pt}&&\qquad\le\frac{(t+1)(|x|+1)+|x| t'+
 [t'(t+1) \{1+(t+1-t')\nu
\log ({(t+1)}/{t'} ) \} ]^{1/2}}{t+1-t'}.
\end{eqnarray*}
Finally, because $t' \in[0,t+1/2]$, this right-hand side is
bounded above by
\begin{eqnarray*}
&&2(t+1) \bigl(|x|+1\bigr)+|x| (2t+1)\\
&&\quad{}+ 2 \biggl[(t+1) \biggl((t+1/2)+t'(t+1)
\nu \log \biggl(\frac{t+1}{t'} \biggr) \biggr) \biggr]^{1/2}
\\
&&\qquad < (4t+3) \bigl(|x|+1\bigr)+2(t+1) \sqrt{1+\nu/e}=: a,
\end{eqnarray*}
since $\sup_{s\ge0} s\log\frac{t}{s} =
s\log\frac{t}{s} |_{s=t/e} =
\frac{t}{e}$ for all $t>0$. This completes the proof.
\end{pf}

\begin{lemmas}\label{L2:GG}
For all $t$, $s>0$ and $x$, $y\in\R$, we have that $ G_\nu^2(t,x) =
\frac{1}{\sqrt{4\pi\nu t}} G_{{\nu}/{2}}(t,x)$ and
$G_\nu(t,x)G_\nu (s,y )
= G_\nu (\frac{ts}{t+s},\frac{s x+t
y}{t+s} )
G_\nu (t+s,x-y )$.
\end{lemmas}

The proof of this lemma is straightforward and is left to the reader.

\begin{lemmas}\label{L2:Split}
For all $x$, $z_1$, $z_2\in\R$ and $t,s>0$, denote
$\bar{z} = \frac{z_1+z_2}{2}$, $\Delta z = z_1-z_2$. Then
$G_1 (t,x-\bar{z} )
G_1 (s,\Delta z )
\le\frac{(4t) \vee s}{\sqrt{t s}}
G_1  ((4t)\vee s,x-z_1 )
G_1  ((4t)\vee s,x-z_2 )$, where $a\vee
b:=\max(a,b)$.
\end{lemmas}

\begin{pf}
Since $(z_2-z_1)^2 +  [(x-z_1)+(x-z_2) ]^2 \ge
(x-z_1)^2+(x-z_2)^2$,
\[
G_1 (t,x-\bar{z} ) G_1 (s,\Delta z )\le
\frac{1}{2\pi\sqrt{t s}}
 e^{-{ ([(x-z_1)+(x-z_2) ]^2+(z_1-z_2)^2)}/{(2 ((4t)\vee
s))}}.
\]
\upqed\end{pf}

\begin{lemmas}\label{L2:IntHG}
$\int_0^t  (\calH(r)+1 )
G_{2\nu}(t-r,x)\,\ud r = \frac{1}{\lambda^2}
 (
e^{{(\lambda^4 t-2 \lambda^2
|x|)}/{(4 \nu)}} \times\break  \Erfc (\frac{|x|-\lambda^2 t}{2
\sqrt{\nu t}} )- \Erfc (\frac{|x|}{2 \sqrt{\nu t}} )
 )$, $t\ge0$.
\end{lemmas}
\begin{pf}
Let $\mu=\frac{\lambda^4}{4\nu}$.
By \cite{Erdelyi1954-I}, (27) on page~146] and
\cite{Erdelyi1954-I}, (5) on page~176, the Laplace
transform of the convolution
equals
\begin{eqnarray*}
&&\calL \bigl[G_{2\nu}(\cdot,x) \bigr](z) \calL \bigl[\calH(\cdot)+1
\bigr](z)\\
&&\qquad = \frac{1}{\sqrt{4\nu}} \frac{1}{\sqrt{z}}e^{-{|x|\sqrt
{z}}/{\sqrt{\nu}}} \biggl(
\frac{1}{z-\mu}+\frac{\sqrt{\mu}}{\sqrt{z}(z-\mu)} \biggr)
\frac{\exp (-({|x|}/{\sqrt{\nu}}) \sqrt{z} )}{
\sqrt{4 \nu
} z  (\sqrt{z}-\mu )}.
\end{eqnarray*}
Then apply the inverse Laplace transform
(see \cite{Erdelyi1954-I}, (14) on
page~246).
\end{pf}

%
\begin{lemmas}\label{L2:IntHG-BC}
$\int_0^t \,\ud r \frac{|x| e^{-{x^2}/{(4\nu r)}+{(t-r)}/{(4\nu)
}}}{\sqrt{\pi
\nu r^3}}\Phi (\sqrt{\frac{t-r}{2\nu}} )
= \exp (\frac{t-2 |x|}{4 \nu} ) \Erfc (\frac
{|x|-t}{\sqrt
{4\nu
t}} )$,
for all $t\ge0$ and $x\ne0$.
\end{lemmas}
\begin{pf}
Suppose that $x\ne0$.
Denote the integral by $I(t)$. Let
\[
f(t)=\frac{|x|}{\sqrt{\pi\nu
t^3}}e^{-{x^2}/{(4\nu t)}}\quad \mbox{and}\quad g(t)=e^{{t}/{(4\nu)}}\Phi
\bigl(\sqrt{(2\nu)^{-1}t} \bigr).
\]
Clearly, $I(t)$ is the convolution of $f$ and $g$. By
\cite{Erdelyi1954-I}, (28) on page~146,
\[
\calL[f](z) = 2\exp \bigl(-|x|\sqrt{z/\nu} \bigr).
\]
Notice $g(t) = (H(t) + 1)/2$ with $H(t)=\calH(t;\nu, 1)$.
By the calculations in Lemma~\ref{L2:IntHG},
\[
\calL[g](z) = \frac{1}{2(z-1/(4\nu))} + \frac{1}{4\sqrt{\nu
z} (z-1/(4\nu) )}.
\]
Hence,
\[
\calL[I](z) = \calL[f](z)\calL[g](z) =
\frac{e^{-|x| \sqrt{z/\nu}}}{\sqrt{z}  (\sqrt{z}-{1}/{(2\sqrt{\nu})} )}.
\]
Then apply the inverse Laplace transform (see
\cite{Erdelyi1954-I}, (16) on page~247).
\end{pf}


\begin{lemmas}\label{L2:TP-Delta-BC}
\eqref{E2:TP-Delta-BC} equals $G_\nu(t,x) G_\nu (t,y )
+
\frac{1}{4\nu} G_{{\nu}/{2}} (t,\frac{x+y}{2} )\times\break 
\exp (\frac{t-2|x-y|}{4\nu} )
\Erfc (\frac{|x-y|- t }{\sqrt{4\nu t }} )$.
\end{lemmas}

\begin{pf}
After some simplifications, the integral in \eqref{E2:TP-Delta-BC} is
equal to
the following integral:
\begin{eqnarray*}
&&\frac{1}{4\pi\nu t} G_{{\nu}/{2}} \biggl(t,\frac{x+y}{2} \biggr) \int
_0^1 \,\ud s\frac{|x-y|}{\sqrt{s^3}} \exp \biggl(-
\frac
{(x-y)^2}{4\nu t
s} \biggr)
\\
&&\qquad{}\times \biggl( \frac{1}{\sqrt{1-s}} + \sqrt{\pi t/\nu} \exp \biggl(
\frac{t
(1-s)}{4\nu
} \biggr) \Phi \biggl(\sqrt{\frac{t(1-s)}{2\nu}} \biggr) \biggr).
\end{eqnarray*}
Denote this integral by $I_1(1) + I_2(1)$. Suppose that $x\ne y$ and let
\begin{eqnarray*}
f(s)&=& \frac{|x-y|}{s^{3/2}} \exp \biggl(-\frac{(x-y)^2}{4\nu t
s} \biggr),\qquad g(s) =
\frac{1}{\sqrt{s}},\\
  h(s)&=& \frac{\sqrt{\pi t}}{\sqrt{\nu}} \exp \biggl(\frac{ts}{4\nu}
\biggr)\Phi \biggl(\sqrt{\frac{ts}{2\nu
}} \biggr).
\end{eqnarray*}
Then by \cite{Erdelyi1954-I}, (28) on page~146,
and \cite{Erdelyi1954-I}, page~135,
\[
\calL[I_1](z) = \calL[f](z)\calL[g](z) = 2\pi\sqrt{\nu t}
\frac{\exp (-{|x-y|\sqrt{z}}/{\sqrt {\nu t}} )}{\sqrt{z}}.
\]
Apply the inverse Laplace transform (see \cite{Erdelyi1954-I}, (6) on
page~246),
\[
I_1(s) = \frac{2\sqrt{\pi\nu t}}{\sqrt{s}} \exp \biggl(-\frac{(x-y)^2}{4\nu s
t}
\biggr)\qquad \mbox{for $s>0$.}
\]
As for $I_2(s)$, by the calculation in Lemma~\ref{L2:IntHG-BC},
\[
\calL[h](z) = \frac{\sqrt{\pi
t}}{2\sqrt{\nu}} \biggl( \frac{1}{z-t/(4\nu)} +
\frac{\sqrt{t}}{2\sqrt{\nu z}
(z-t/(4\nu
) )} \biggr).
\]
Hence,
\[
\calL[I_2](z) =\calL[f](z) \calL[h](z) = \pi t
e^{-{|x-y|\sqrt{z}}/{\sqrt{\nu t}}}
\frac{1}{\sqrt{z} (\sqrt{z}-\sqrt{t/(4\nu)} )}.
\]
Then apply the inverse Laplace transform (see
\cite{Erdelyi1954-I}, (16) on page~247).
Finally, let $s=1$ and use Lemma~\ref{L2:GG}.
\end{pf}

\begin{lemmas}\label{L:IntIntGG}
For $\nu>0$, $\tau\ge t \ge0$ and $x,y\in\R$,
\begin{eqnarray*}
\int_t^\tau G_\nu(r,x)\,\ud r =
\frac{2|x|}{\nu} \biggl(\Phi \biggl(\frac{|x|}{\sqrt{\nu
\tau}} \biggr)-\Phi \biggl(
\frac{|x|}{\sqrt{\nu t}} \biggr) \biggr) + 2 \tau G_{\nu}(\tau,x)-2 t
G_{\nu}(t,x)
\end{eqnarray*}
and
\begin{eqnarray*}
&&\int_0^t \,\ud r\int_\R
\,\ud z  G_\nu(t-r,x-z)G_\nu (\tau-r,y-z )
\\
&&\qquad= \frac{|x-y|}{\nu} \biggl(\Phi \biggl(\frac{|x-y|}{\sqrt{\nu
(\tau+t)}} \biggr)-\Phi
\biggl(\frac{|x-y|}{\sqrt{\nu(\tau
-t)}} \biggr) \biggr)
\\
&&\qquad\quad{} + (\tau+t) G_{\nu} (\tau+t,x-y ) - (\tau-t) G_{\nu} (
\tau-t,x-y ).
\end{eqnarray*}
\end{lemmas}
\begin{pf}
Consider the first integral.
The case where $x=0$ is straightforward, so we assume that $x\ne0$. This
right-hand side is obtained by a change variable and integration by parts:
\begin{eqnarray*}
\int^\tau_t G_\nu(r,x)\,\ud r &=&
\frac{2|x|}{\nu} \int^{|x|/\sqrt
{\nu
t}}_{|x|/\sqrt{\nu\tau}}
\frac{1}{\sqrt{2\pi} u^2} e^{-u^2/2}\,\ud u\\
& =&\frac{2|x|}{\nu} \biggl(
\frac{e^{-u^2/2}}{\sqrt{2\pi}u} \bigg|^{|x|/\sqrt{
\nu\tau}}_{|x|/\sqrt{ \nu t}} -\int^{|x|/\sqrt{\nu t}}_{|x|/\sqrt{\nu\tau}}
\frac
{e^{-u^2/2}}{\sqrt
{2\pi}}\,\ud u \biggr).
\end{eqnarray*}
For the second integral, use the semigroup property to integrate over
$z$, and then apply the first integral.
\end{pf}

\begin{lemmas}\label{L2:IntGG}
For $t \geq0$ and $x, y\in\R$, we have that
\[
\int_0^t G_\nu(r,x)G_\sigma(t-r,y)
\,\ud r = \frac{1}{2\sqrt{\nu\sigma}} \Erfc \biggl(\frac{1}{\sqrt{2t}} \biggl(
\frac{|x|}{\sqrt{\nu}} +\frac{|y|}{\sqrt{\sigma} } \biggr) \biggr),
\]
where $\nu$ and $\sigma$ are strictly positive.
In particular, by letting $x=0$, we have that
\[
\int_0^t \frac{G_\sigma(t-r,y)}{\sqrt{2\pi\nu r}} \,\ud r =
\frac{1}{2\sqrt{\nu\sigma}}\Erfc \biggl(\frac{|y|}{\sqrt{2\sigma
t}} \biggr)\le \frac{\sqrt{\pi t}}{\sqrt{2\nu}}
G_{\sigma} (t,y ).
\]
\end{lemmas}
\begin{pf}
By \cite{Erdelyi1954-I}, (27) on page~146, the Laplace transform of
the integrand is
\[
\calL \bigl[G_\nu(\cdot,x) \bigr](z)\cdot \calL
\bigl[G_\sigma(\cdot,y) \bigr](z) = \frac{\exp (-\sqrt{2z}
({|x|}/{\sqrt{\nu}}+
{|y|}/{\sqrt
{\sigma}}
 ) ) } { 2\sqrt{\nu\sigma z^2}},
\]
and the conclusion follows by applying the inverse Laplace transform (see
\cite{Erdelyi1954-I}, (3) on page~245).
As for the special case $x=0$, use formula \cite{NIST2010}, (Equation~7.7.1,
page~162) to
write
\[
\Erfc(x) = \frac{2}{\pi} e^{-x^2} \int_0^\infty
\frac{e^{-x^2 r^2}}{1+r^2}\,\ud r \le\frac{2}{\pi} e^{-x^2} \int
_0^\infty\frac{1}{1+r^2}\,\ud r =
e^{-x^2}.
\]
\upqed\end{pf}

\begin{propositionn}[{[Properties of $E_{a,\sd}(x)$, defined in \eqref
{E2:Eac2}]}]
\label{P2:E}
For $a>0$ and $\sd\in\R$,
\begin{longlist}[(iii)]
\item[(i)]$E_{a,0}(x) = 1$;
\item[(ii)] for $\nu>0$,
$ (e^{\sd|\cdot|} * G_{\nu}(t,\cdot) ) (x)
= e^{{\sd^2\nu t}/{2}}E_{\nu t,\sd}(x)$;
%
\item[(iii)] first and second derivatives:
\begin{eqnarray*}
E_{a,\sd}'(x) &=& - \sd e^{-\sd x} \Phi \biggl(
\frac{a \sd-x}{\sqrt{a}} \biggr) + \sd e^{\sd x} \Phi \biggl(\frac{a \sd+x}{\sqrt{a}}
\biggr),
\\
E_{a,\sd}''(x)&=& \sd\sqrt{
\frac{2}{\pi a}} e^{-{(a^2 \sd
^2+x^2)}/{(2a)}} + \sd^2 E_{a,\sd}(x);
\end{eqnarray*}
%
%
\item[(iv)] for $\sd>0$, $e^{\sd|x|}\le E_{a,\sd}(x) < e^{\sd x}+e^{-\sd x}$;
for $\sd<0$, $\Phi (\sqrt{a} \sd )E_{a,2 \sd
}^{1/2}(x)\le
E_{a,\sd}(x) \le e^{-|\sd
x|}$;
%
\item[(v)] for $\sd>0$, $x\mapsto E_{a,\sd}(x)$ is strictly convex and
$\inf_{x\in\R}E_{a,\sd}(x) = \break E_{a,\sd}(0) =
2\Phi(\sd\sqrt{a})>1$,
with $E_{a,\sd}''(0)=\sd\sqrt{\frac{2}{\pi
a}}e^{-{\sd^2 a}/{2}}+2\sd^2\Phi(\sd\sqrt{a})>0$;
for $\sd<0$, the function $E_{a,\sd}(x)$ is decreasing for $x\ge0$ and
increasing
for $x\le0$, and it therefore achieves its global maximum at zero:
$\sup_{x\in\R}E_{a,\sd}(x) = E_{a,\sd}(0) = 2\Phi(\sd\sqrt{a})<1$,
with $E_{a,\sd}''(0)=\sd\sqrt{\frac{2}{\pi
a}}e^{-{\sd^2 a}/{2}}+2\sd^2\Phi(\sd\sqrt{a})\le0$;
%
\item[(vi)] concerning $a\mapsto E_{a,\sd}(x)$,
\[
\frac{\partial E_{a,\sd}(x)}{\partial a}= \frac{\sd}{\sqrt{2\pi a}} \exp \biggl(-\frac{a^2 \sd^2+x^2}{2a}
\biggr).
\]
Hence, for all $x\in\R$, then the function $a\mapsto E_{a,\sd}(x)$ is
nondecreasing for $\sd>0$ and nonincreasing for $\sd<0$.
\end{longlist}
\end{propositionn}
\begin{pf}
(i) Is trivial. (ii) Follows from a direct calculation. (iii) Is
routine. We now
prove (iv).
Suppose that $\sd<0$.
We first prove the upper bound. Since $x\mapsto
E_{a,\sd}(x)$ is an even function, we shall
only consider $x\ge0$.
We need to show that for $x\ge0$
\[
e^{-\sd x}\Phi \biggl(\frac{a \sd-x}{\sqrt{a}} \biggr)+ e^{\sd x}\Phi
\biggl(\frac{a \sd+x}{\sqrt{a}} \biggr)\le e^{\sd x}
\]
or equivalently from the fact that $1-\Phi (\frac{a \sd
+x}{\sqrt
{a}} )
=
\Phi (\frac{-a \sd-x}{\sqrt{a}} )$,
\[
F(x):= e^{\sd x}\Phi \biggl(\frac{-a \sd-x}{\sqrt{a}} \biggr)- e^{-\sd x}
\Phi \biggl(\frac{a \sd-x}{\sqrt{a}} \biggr)\ge0.
\]
This is true since
\[
F'(x) =\sd e^{\sd x}\Phi \biggl(\frac{-a \sd-x}{\sqrt{a}}
\biggr)+ \sd e^{-\sd x}\Phi \biggl(\frac{a \sd-x}{\sqrt{a}} \biggr) \le0
\]
and $\lim_{x\rightarrow+\infty} F(x) = 0$ by applying l'H\^
{o}pital's rule.
Note that $F(0)=  \break \Phi(-\sqrt{a}\sd)-\Phi(\sqrt{a}\sd)>0$ since
$\sd<0$.

As for the lower bound, when $\sd<0$, we have that
\begin{eqnarray*}
E_{a,\sd}^2(x)&=& \biggl[ e^{-\sd x}\Phi \biggl(
\frac{a \sd-x}{\sqrt{a}} \biggr) + e^{\sd x}\Phi \biggl(\frac{a \sd+x}{\sqrt{a}}
\biggr) \biggr]^2
\\
&\ge& e^{-2|\sd x|} \Phi^2 \biggl(\frac{a \sd+|x|}{\sqrt{a}} \biggr) \ge
e^{-2|\sd x|} \Phi^2 (\sqrt{a}\sd ).
\end{eqnarray*}
Then the lower bound follows from the fact that $e^{-2|\sd x|}\ge
E_{a,2\sd}(x)$.
As for the first part of (iv) where $\sd>0$, the upper bound holds since
$\Phi(\cdot)<1$. The lower bound is a consequence of the upper
bound with
$\sd<0$ and the equality $E_{a,\sd}(x)=e^{\sd x}+e^{-\sd x}-E_{a,-\sd}(x)$,
which follows from \eqref{E2:Eac}.
Now consider (v). We first consider the case $\sd>0$. By (iii),
$E_{a,\sd}''(x)> 0$ for all $x\in\R$, hence $x\mapsto E_{a,\sd}(x)$
is strictly
convex.
By \eqref{E2:Eac2},
\[
\frac{\ud}{\ud x} E_{a,\sd}(x) = \sd e^{-{a\sd^2}/{2}}\int
_0^\infty e^{\sd
y} \bigl(
G_a(1,x-y)-G_a(1,x+y) \bigr)\,\ud y.
\]
Clearly, if $x\ge(\le) 0$, then $G_a(1,x-y)-G_a(1,x+y)\ge(\le)
0$ for all $y\ge0$. Hence, $\frac{\ud}{\ud x} E_{a,\sd}(x)\ge(\le)
0$ if
$x\ge(\le)
0$ and the global minimum is achieved at $x=0$. Similarly, for $\sd
<0$, we have
$\frac{\ud}{\ud x} E_{a,\sd}(x)\le(\ge) 0$ if $x\ge(\le)
0$ and the global maximum is taken at $x=0$, which then implies that
$E_{a,\sd}''(0)\le0$ [note that by (iii), $E_{a,\sd}''(x)$ exists].
As for (vi),
\[
\frac{\partial}{\partial a}e^{\mp\sd x}\Phi \biggl(\frac{a
\sd\mp x}{\sqrt{a}} \biggr) =
\frac{a \sd\pm x}{2 a^{3/2}\sqrt{2\pi}} \exp \biggl(-\frac{a^2\sd^2+x^2}{2a} \biggr).
\]
Adding these two terms proves the formula for $\frac{\partial
E_{a,\sd}(x)}{\partial a}$. The rest is
clear.
\end{pf}
\end{appendix}

\section*{Acknowledgements}
The authors thank Daniel Conus, Davar Khoshnevisan, and Roger Tribe for
stimulating
discussions and Leif D\"oring for a discussion that led to Remark~\ref{R:Transform}. The authors also thank two anonymous referees for a careful
reading of this paper and many useful suggestions.

%
%






\printaddresses
\end{document}